\pgfplotsset{compat=1.15}
\newtheorem{theorem}{Theorem}[section]
\newtheorem{lemma}[theorem]{Lemma}
\newtheorem{proposition}[theorem]{Proposition}
\newtheorem{corollary}[theorem]{Corollary}
\theoremstyle{definition}
\newtheorem{definition}[theorem]{Definition}
\theoremstyle{remark}
\newtheorem{remark}[theorem]{Remark}
\numberwithin{equation}{section}
\newcommand {\ZZ}  {{\mathbb Z}}
\newcommand {\CC}  {{\mathbb C}}
\newcommand {\RR}  {{\mathbb R}}
\newcommand {\QQ}  {{\mathbb Q}}
\newcommand {\KK}  {{\mathbb K}}
\newcommand{\spk}[1]{\left\langle #1\right\rangle}
\newcommand{\abs}[1]{{\left\lvert#1\right\rvert}}
\newcommand{\nm}[1]{\left\| #1 \right\|}
\renewcommand {\Im}  {\text{Im}}
\newcommand {\fL}  {f_L}
\newcommand {\fR}  {f_R}
\newcommand {\A}  {{\mathcal{A}}}
\newcommand {\R}  {{\mathcal{R}}}
\newcommand {\bfx}  {{\bm{x}}}
\newcommand {\bfy}  {{\bm{y}}}
\newcommand {\bfxi}  {{\bm{\xi}}}
\newcommand {\bfM}  {{M}}
\newcommand {\bfell}  {{\ell}}
\newcommand {\bfPsi}[1]  {{\psi}(#1)}
\newcommand {\bfPhi}[1]  {{\Psi}[#1]}
\newcommand {\bfnu}  {\bm{\nu}}
\newcommand {\F}  {{\mathcal{F}}}
\newcommand {\Fa}  {\mathcal{F}_\alpha}
\def\mod{{\rm mod\,}}
\def\oddots{\mathinner{\mkern1mu\raise\p@ \vbox{\kern7\p@\hbox{.}}\mkern2mu \raise4\p@\hbox{.}\mkern2mu\raise7\p@\hbox{.}\mkern1mu}}
\title{Fractal tiles induced by tent maps}
\author[K.~Scheicher]{Klaus Scheicher{$^1$}}
\address{\tiny $^{1}$
BOKU University, Institute of Mathematics, Department of Natural Sciences and
Sustainable Resources, Gregor Mendel Stra\ss{}e 33, 1180 Vienna, Austria}
\email{klaus.scheicher@boku.ac.at}
\author[V.F.~Sirvent]{V\'{\i}ctor F. Sirvent{$^2$}}
\address{\tiny $^{2}$Departamento de Matem\'aticas, Universidad Cat\'olica del Norte, Antofagasta, Chile}
\email{victor.sirvent@ucn.cl}
\author[P.~Surer]{Paul Surer{$^{3}$}}
\address{\tiny $^{3}$
BOKU University, Institute of Mathematics, Department of Natural Sciences and
Sustainable Resources, Gregor Mendel Stra\ss{}e 33, 1180 Vienna, Austria}
\email{paul.surer@boku.ac.at}
\thanks{The third author's work was supported by the Austrian Science Fund (FWF), 
Project P 28991-N35.}
\date{\today}
\subjclass[2020]{37B10, 11R06, 28A78, 37E05, 28A80}
\begin{document}

\begin{abstract}
In the present article we deal with geometrical objects induced by the tent maps associated with special Pisot numbers that we call tent-tiles. They are compact subsets of the one-, two-, or three-dimensional Euclidean space, depending on the particular special Pisot number. Most of the tent-tiles have a fractal shape and we study the Hausdorff dimension of their boundary.
Furthermore, we are concerned with tilings induced by tent-tiles.
It turns out that tent-tiles give rise to two types of lattice tilings. 
In order to obtain these results we establish and exploit 
connections between tent-tiles and Rauzy fractals induced by substitutions and automorphisms of the free group. 
\end{abstract}

\maketitle


\begin{section}{Introduction}\label{s:intro}

Consider a real number  $\alpha>1$, let $\beta=\beta(\alpha):=\alpha(\alpha-1)^{-1}$ and define the tent map $T_\alpha$ by
\[T_\alpha: [0,1] \longrightarrow [0,1], x \longmapsto
\begin{cases}
\alpha x, & \text{if }  x \in [0,\alpha^{-1}]; \\ \beta (1-x), & \text{if } x \in [\alpha^{-1},1].
\end{cases}
\]
The dynamics induced by tent maps have been intensively studied in \cite{Lagarias-Porta-Stolarsky:93, Lagarias-Porta-Stolarsky:94} and more recently in  \cite{Scheicher-Sirvent-Surer:16}. 
\begin{figure}[h]
\includegraphics[width=0.13\textwidth]{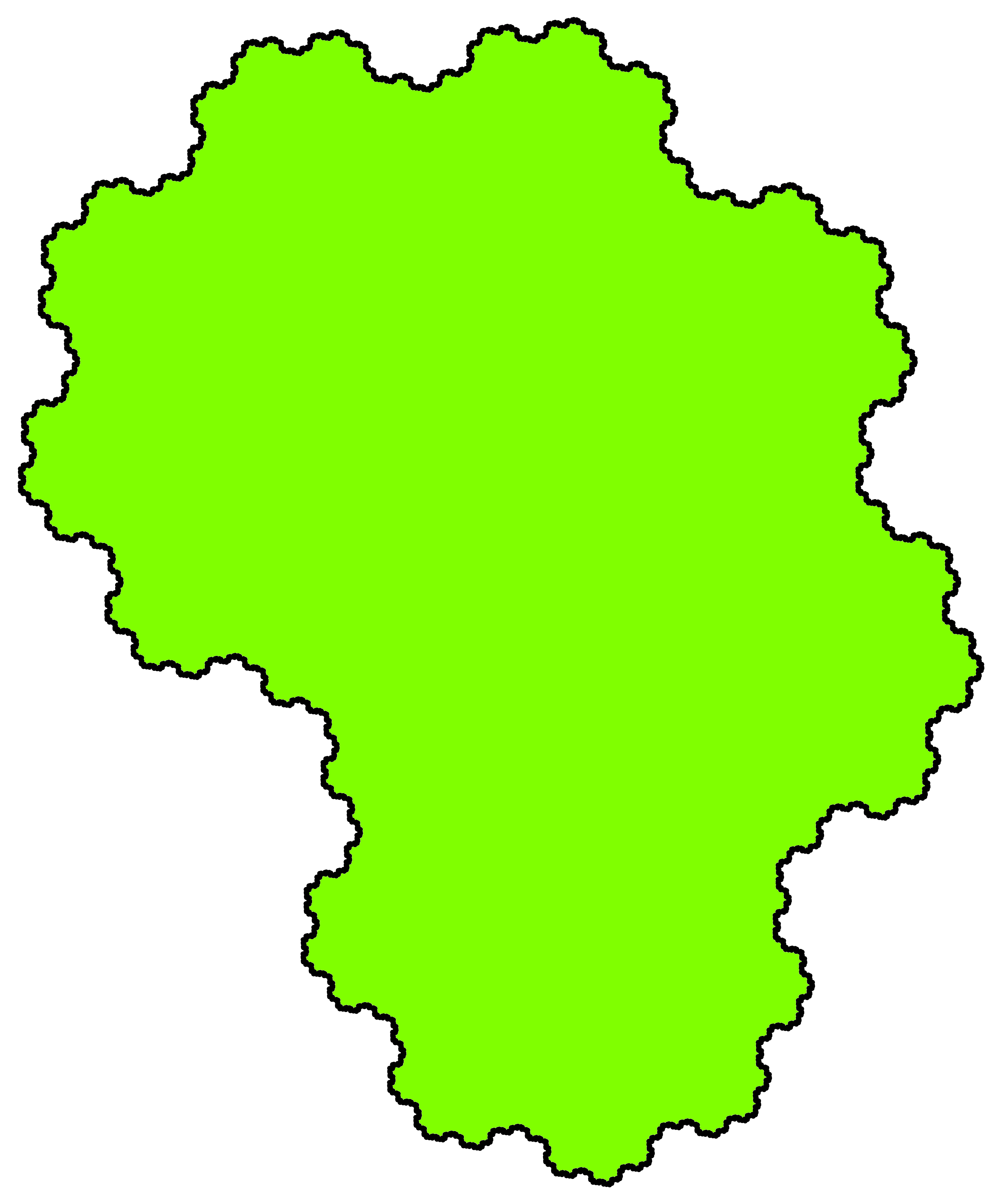}
\hfill
\includegraphics[width=0.15\textwidth]{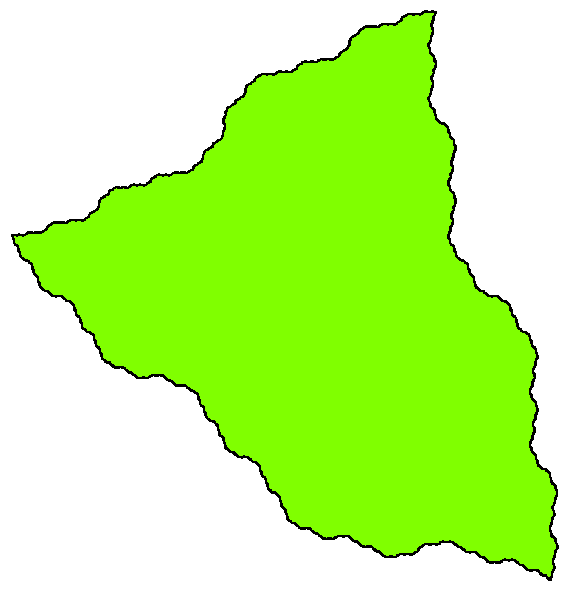}
\hfill
\includegraphics[width=0.15\textwidth]{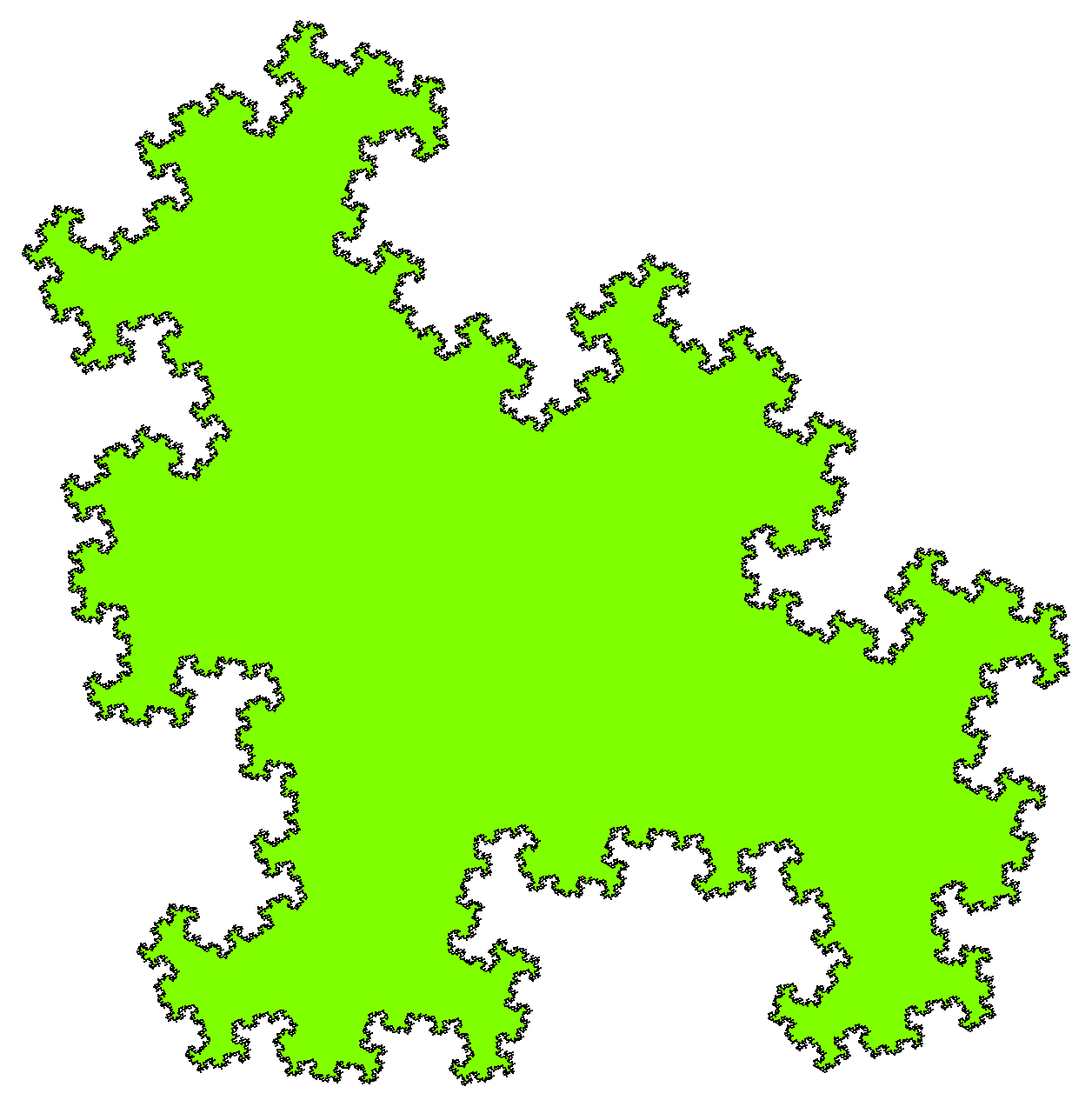}
\hfill
\includegraphics[width=0.13\textwidth]{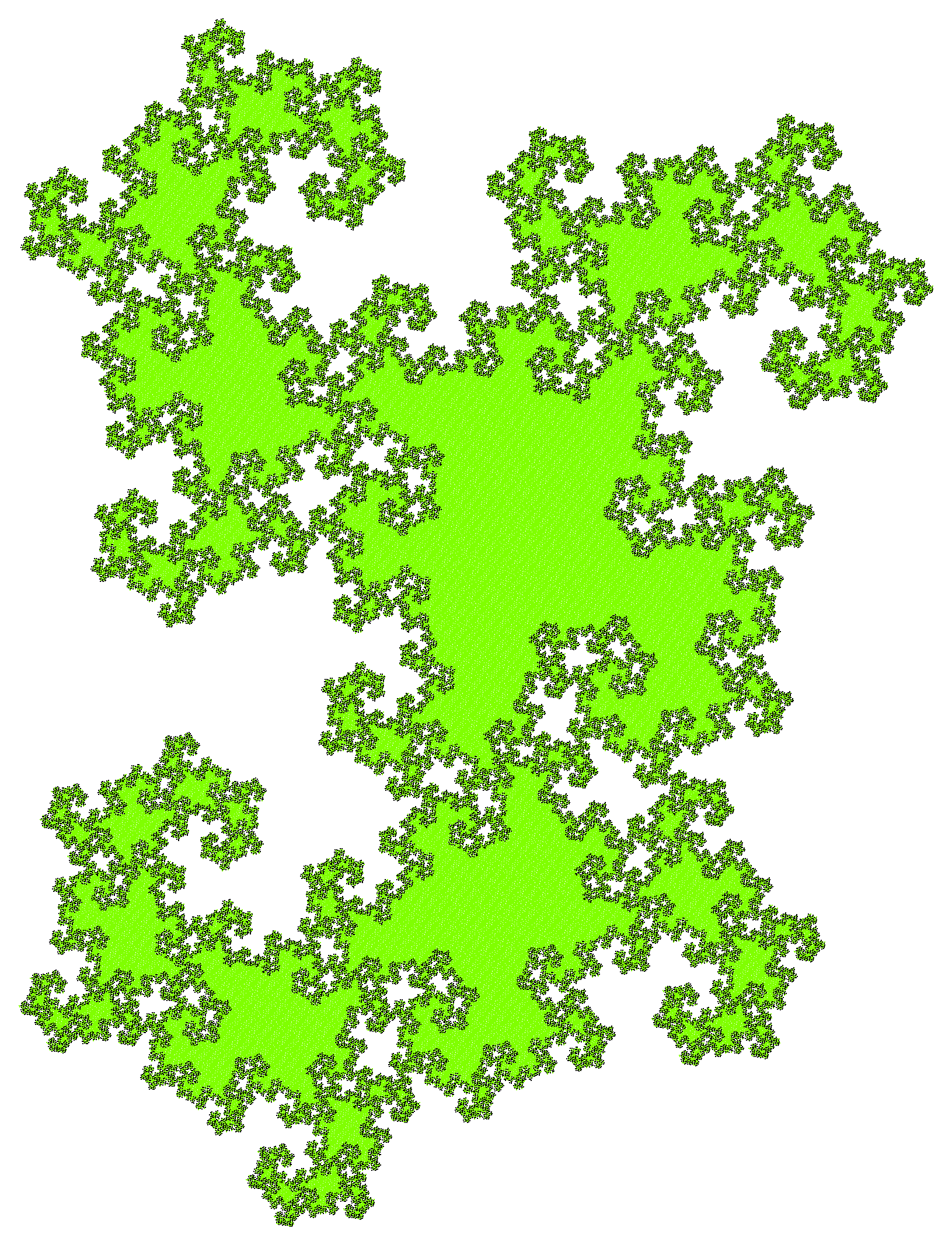}
\hfill
\includegraphics[width=0.15\textwidth]{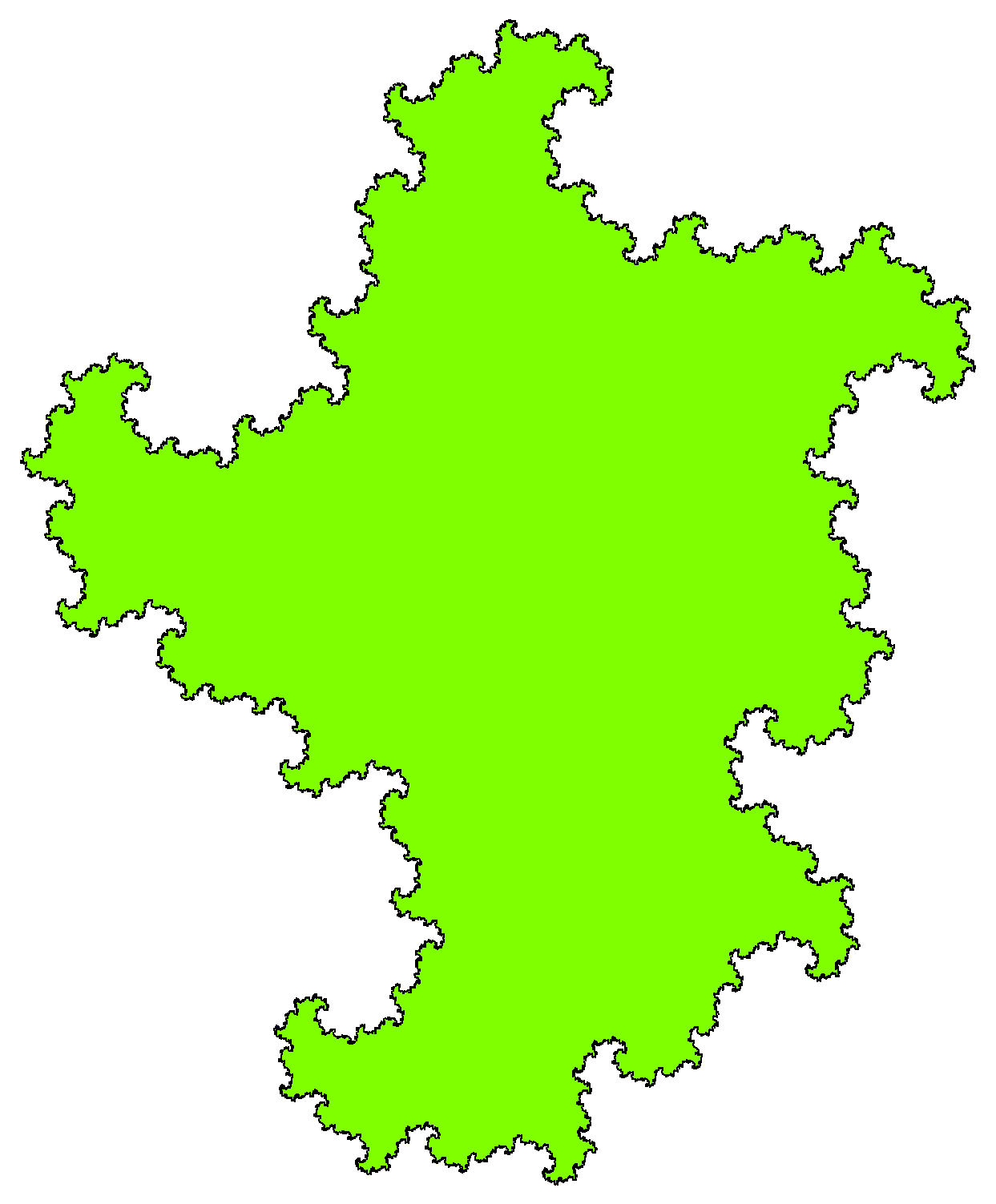}
\hfill
\includegraphics[width=0.14\textwidth]{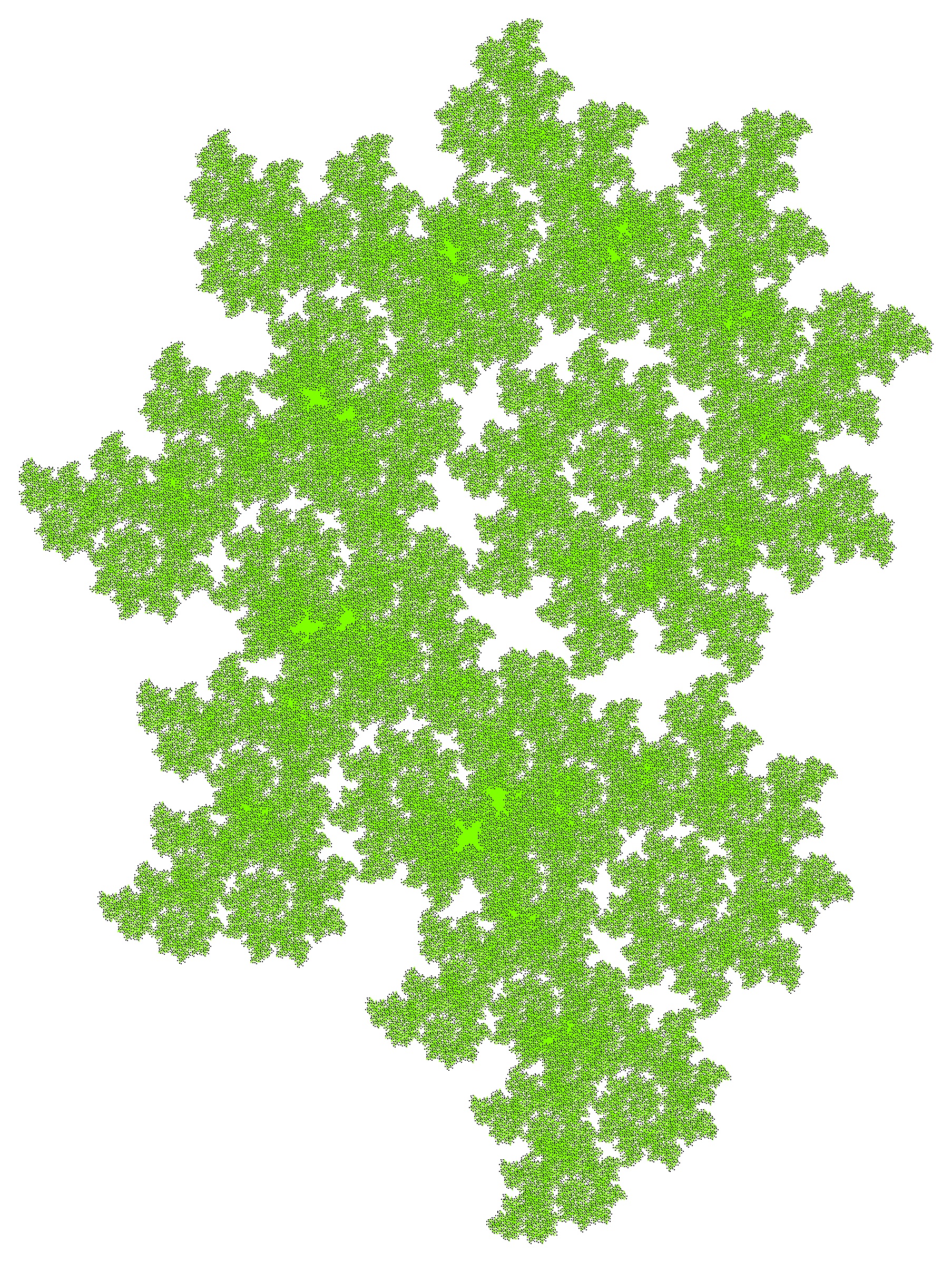}
\caption{There are six planar tent-tiles. They all have a fractal shape.}
\label{planarcases}
\end{figure}

In the present paper we study geometrical objects induced by the tent map that we call {\it tent-tiles}. 
The definition is inspired by the investigation of attractors of tent maps in \cite{Lagarias-Porta-Stolarsky:94}. It is also related to dual systems of algebraic iterated function systems as presented in \cite{Rao-Wen-Yang.2014}. 
The principle idea is to consider the Galois conjugates for algebraic parameters $\alpha$ and to study the respective action of the two branches of $T_\alpha$. In particular,
suppose that $\alpha$ is a Pisot unit such that $\beta$ is a Pisot unit, too. We denote by $d+1$ the algebraic degree of $\alpha$. Let $\alpha^{(1)}, \ldots, \alpha^{(r)}$ be the real Galois conjugates of $\alpha$ and
$\alpha^{(r+1)}, \overline{\alpha^{(r+1)}} \ldots, \alpha^{(r+s)}, \overline{\alpha^{(r+s)}}$ be the non-real complex ones, i.e. $d=r+2s$. We  denoted by $\KK_\alpha$ the Euclidean space
\[\KK_\alpha =\RR^r \times \CC^s \cong \RR^d.\]
For an $z \in \QQ(\alpha)$ and $k \in \{1, \ldots, r+s\}$ we let $z^{(k)} \in \QQ(\alpha^{(k)})$ be the respective Galois conjugate. Define the embedding
\[\psi: \QQ(\alpha) \longrightarrow \KK_\alpha, z \longmapsto (z^{(1)}, \ldots, z^{(r+s)})^T,\]
where $\bm{v}^T$ denotes the  transpose of the vector $\bm{v}$.
Furthermore, for an $z \in \QQ(\alpha)$ we let $\bfPhi{z}$ denote the linear operator
\[\bfPhi{z}: \KK_\alpha \longrightarrow \KK_\alpha, (x_1, \ldots, x_{r+s})  \longmapsto (z^{(1)}\cdot x_1, \ldots, z^{(r+s)}\cdot x_{r+s})^T.\]
Since both $\alpha$ and $\beta$ are Pisot numbers, the two functions
\begin{align*}
\fL: & \KK_\alpha \longrightarrow \KK_\alpha, \bfx \longmapsto \bfPhi{\alpha}(\bfx), \\
\fR: & \KK_\alpha \longrightarrow \KK_\alpha, \bfx \longmapsto \bfPhi{\beta}(\bfPsi{1}-\bfx)
\end{align*}
are contractions and, hence, 
induce an iterated function system in the sense of \cite{Hutchinson.1981} (IFS for short) in the Euclidean space $\KK_\alpha$. The tent-tile is the invariant set induced by this IFS, that is the uniquely determined non-empty compact set $\Fa$ that satisfies $\Fa=\fL(\Fa) \cup \fR(\Fa)$.

A Pisot number $\alpha$ is called a \emph{special Pisot number} when $\beta(\alpha)$ is also a Pisot number. 
Smyth showed in \cite{Smyth.1999} that there exist exactly $11$ special Pisot numbers (see Table~\ref{specialpisotnumbers}) where all of them but $\alpha=\alpha_0=2$ are algebraic units.
Therefore, we only have ten cases to study. However, this allows us to explicitly investigate each of
them. Interestingly, the tiles 
show up many different behaviours and properties. Figure~\ref{planarcases} shows the six planar cases associated with the cubic special Pisot numbers (from left to right) $\alpha_1$, $\alpha_3$, $\alpha_5$, $\alpha_{-1}$, $\alpha_{-3}$, $\alpha_{-5}$.
We see that tent-tiles have a fractal shape.
The special Pisot numbers $\alpha_0=2$ does not induce a tent-tile according to our definition but one could think about a $2$-adic approach. For this reason the respective entry in Table~\ref{specialpisotnumbers} says that $\alpha_0=2$ does not induce an archimedian tent-tile. However, we will not further discuss this idea in the present article.

\begin{figure}[h]
  \begin{minipage}{0.37\textwidth} 
  \includegraphics[width=1.0\textwidth]{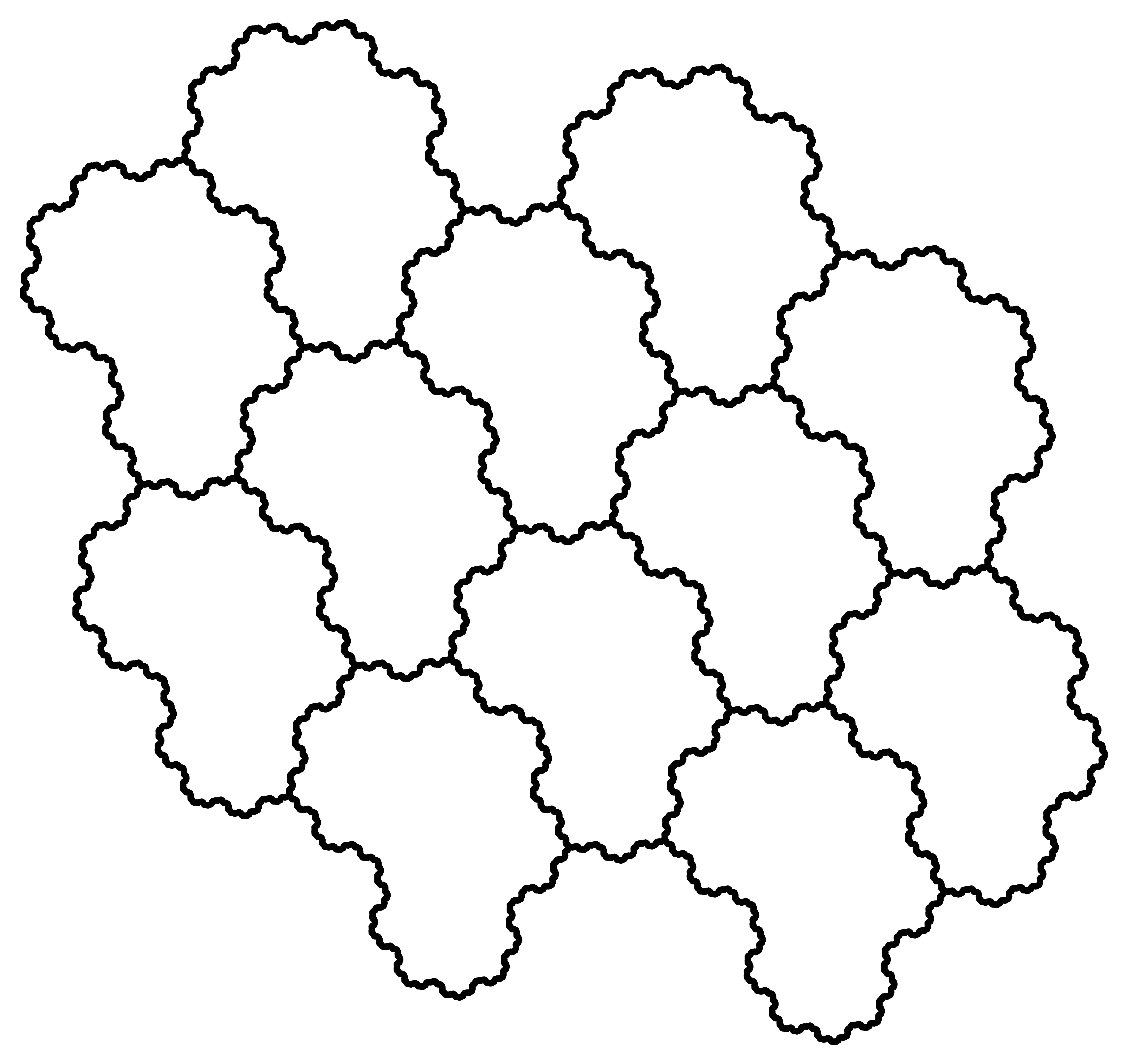}
  \end{minipage}\hfill
  \begin{minipage}{0.48\textwidth} 
  \includegraphics[width=1.0\textwidth]{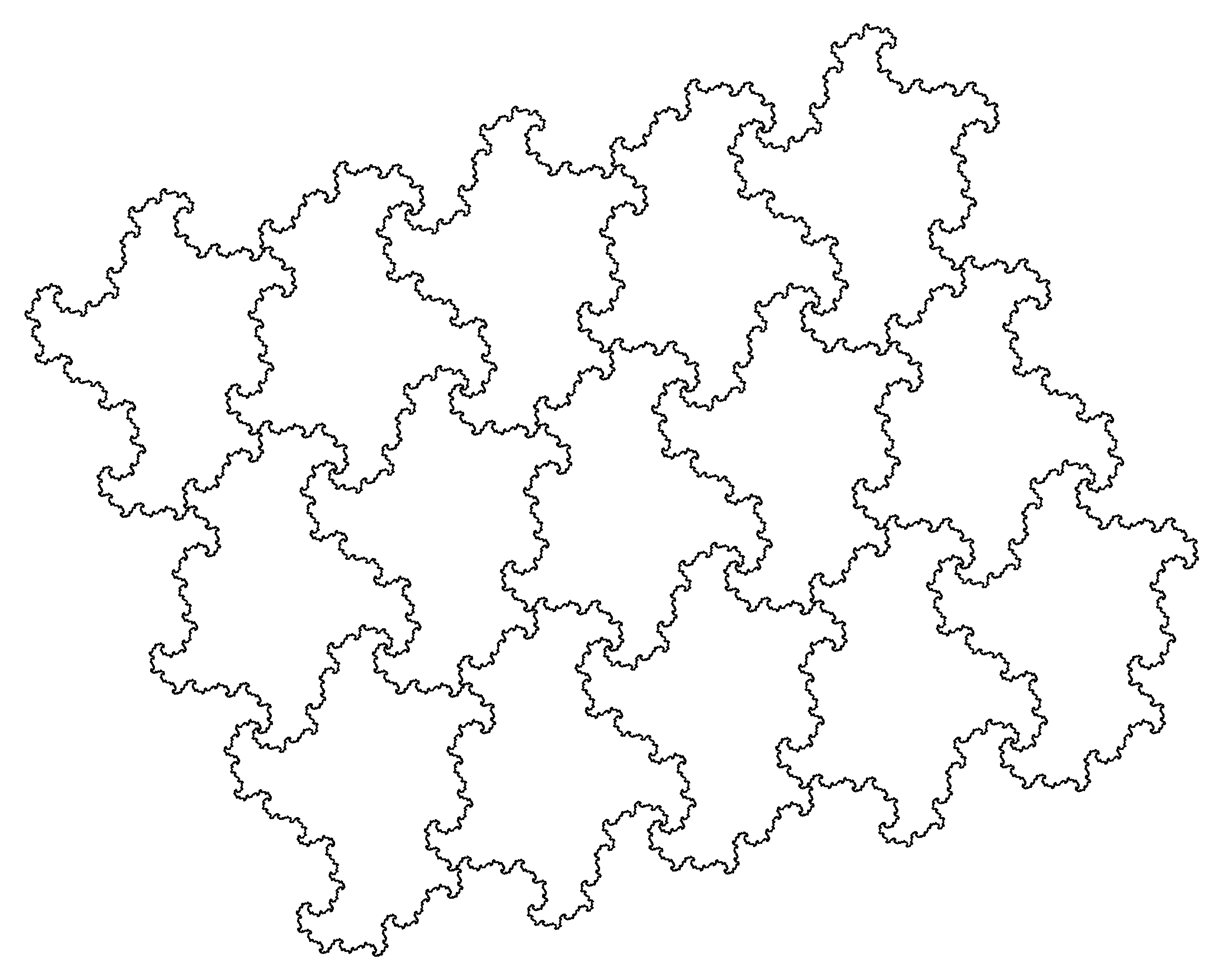}
  \end{minipage}
\caption{Two types of lattice tilings induced by tent-tiles. On the left hand side the tile itself produces the tiling. On the right hand side the tiling is generated by the tile and its reflection.}
\label{tilings}
\end{figure}

As main result we present several topological characteristics of the tent-tiles. Our first result induces that the tiles have positive Lebesgue measure.
\begin{theorem}\label{positivemeasure}
Each tent-tile is a compact sets that is the closure of its interior.
\end{theorem}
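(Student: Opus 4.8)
The statement to prove is that each tent-tile $\Fa$ is compact and equals the closure of its interior. Compactness is immediate: $\Fa$ is, by construction, the non-empty compact attractor of the IFS $\{\fL, \fR\}$ built from the contractions $\bfA_\alpha$ and $\bfB_\alpha$, so nothing needs to be added there. The substance is in showing $\Fa = \overline{\interior \Fa}$, equivalently that $\interior \Fa \neq \emptyset$ (after which a standard argument upgrades this to the full claim). The plan is to transfer the problem to the setting of Rauzy fractals induced by substitutions and free-group automorphisms, as advertised in the abstract, where the analogous property is known, and then pull it back.

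First I would recall, for each of the ten unit special Pisot numbers $\alpha$, the explicit substitution or automorphism $\sigma_\alpha$ whose incidence/abelianization matrix has $\alpha$ as its Perron eigenvalue and whose contracting space realizes $\bfA_\alpha$. The key point is that the two branches $x \mapsto \alpha x$ and $x \mapsto \beta(1-x)$ of the tent map, read on the arithmetic side, correspond exactly to the two ``pieces'' of such a substitution: $\fL = \bfA_\alpha \bfx$ is the contraction attached to the first branch, while $\fR(\bfx)=\bfB_\alpha(\bfPsi_\alpha(1)-\bfx)$, using $\bfB_\alpha = \bfA_\alpha(\bfA_\alpha - \bfI_d)^{-1}$, is an affine contraction conjugate to the second branch via the relation $\beta = \alpha(\alpha-1)^{-1}$. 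I would establish a bi-Lipschitz (indeed affine) conjugacy between $\Fa$ and a central tile / Rauzy fractal $\R_\alpha$ of $\sigma_\alpha$, matching the IFS pieces. Since Rauzy fractals of unit Pisot substitutions of this type are known to be the closure of their interiors (they have non-empty interior: this is classical for the Rauzy-fractal pieces $\R_\alpha(i)$, following from the fact that they tile either $\RR^d$ or a lattice translate structure, and more robustly one can invoke that a unit Pisot substitution with the relevant combinatorial properties — e.g. the strong coincidence or super-coincidence condition — yields tiles that are the closure of their interior), the property transfers to $\Fa$.

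Concretely, once $\interior \Fa \neq \emptyset$ is in hand, the rest is the routine IFS argument: if $U \subseteq \Fa$ is a non-empty open set, then for any finite word $w$ over $\{L,R\}$ the set $f_w(U)$ is open (the maps $\fL,\fR$ are affine bijections of $\RR^d$) and contained in $\Fa$; since the pieces $f_w(\Fa)$ have diameters tending to $0$ and cover $\Fa$, every point of $\Fa$ is a limit of points lying in such open subsets, whence $\Fa \subseteq \overline{\interior \Fa}$; the reverse inclusion is trivial because $\Fa$ is closed. So $\Fa = \overline{\interior \Fa}$.

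The main obstacle is the non-emptiness of the interior, and more precisely making the correspondence with Rauzy fractals genuinely rigorous for all ten cases at once rather than case by case — the degrees range over $2$, $3$ and $4$, and in the degree-$4$ (three-dimensional) cases the associated object is an automorphism of the free group rather than an honest substitution, so one must use the Rauzy-fractal theory for free-group automorphisms (Arnoux–Ito / Berthé–et al.\ style $\mathbb{E}_1^*$ formalism) to get a tiling/interior statement. If a uniform argument proves awkward, the fallback is Smyth's classification: there are only ten tiles, each is given by an explicit IFS with explicit matrices $\bfA_\alpha$, and for each one can exhibit by hand an explicit ball (or a finite union of sub-pieces with an explicit overlap structure) inside $\Fa$, or verify the combinatorial coincidence condition directly. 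Either way, positivity of Lebesgue measure — the corollary mentioned before the theorem — follows at once from $\interior \Fa \neq \emptyset$.
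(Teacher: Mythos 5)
Your proposal follows essentially the same route as the paper: identify each tent-tile (up to an affine isomorphism) with a Rauzy fractal, or with a union of pieces of the invariant set list, of an explicitly given primitive unimodular Pisot substitution (in the degree-4 cases via the double substitution of a free-group automorphism), and then import the closure-of-interior property from the Rauzy-fractal side. The only minor difference is that the paper gets that property directly from the Sirvent--Wang theorem, which needs only primitivity and unimodularity, so no tiling or coincidence condition (which in fact fails in the strong form for $\theta'_5$) is required.
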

We are also interested in the Hausdorff dimension of the boundary of the tent-tiles. Here we obtain either concrete values or at least upper bounds (for the three-dimensional ones) as displayed in Table~\ref{specialpisotnumbers}.
Finally, we are concerned with periodic lattice tilings induced by tent-tiles.
We find out that for some special Pisot numbers the tent-tile provides a periodic lattice tiling of the Euclidean space $\KK_\alpha$ (see the left hand side of Figure~\ref{tilings}). For other special Pisot numbers the associated tent-tile and its reflection produce a periodic tiling (see the right hand side of Figure~\ref{tilings}). 
But there are also two special Pisot numbers that do not seem to provide one of these two types of lattice tiling. The concrete results are also shown in Table~\ref{specialpisotnumbers}.
Summarising, although there exists only a finite number of tent-tiles, they impress by their variety.
Note that IFS (and therefore tent-tiles) also give rise to aperiodic tilings \cite{Barnsley-Vince.2014}. We will not discuss these types of tilings in the present article.

\begin{table}
\begin{tabular}{||l|c|c|c|c||}
\hline
\hline
Number & Value & Minimal polynomial & ${\rm dim}_H(\partial \Fa)$ & Lattice tiling\\
\hline
\hline
$\alpha_{-5}$ & $\approx 4.0796$ & $ t^3-5t^2+4t-1$ & $\approx 1.92089$ & ?\\
\hline
$\alpha_{-4}$ & $\approx 3.62966$ & $ t^4-5t^3+6t^2-4t+1$ & $\leq 2.815$ & yes\\
\hline
$\alpha_{-3}$ & $\approx 3.1479$ & $ t^3-4t^2+3t-1$  & $\approx 1.25074$ & w.r.\\
\hline
$\alpha_{-2}$ & $\approx 2.61803$ & $t^2-3t+1$  & $0$ & yes\\
\hline
$\alpha_{-1}$ & $\approx 2.32472$ & $ t^3-3t^2+2t-1$   & $\approx 1.70018$ & w.r.\\
\hline
$\alpha_{0}$ & $2$ & $ x-2$  & \multicolumn{2}{c||}{No achimedian tent-tile}\\
\hline
$\alpha_1$ & $\approx 1.75488$ & $ t^3-2t^2+t-1$  & $\approx 1.10026$ & yes\\
\hline
$\alpha_{2}$ & $\approx 1.61803$ & $t^2-t-1$  & $0$ & yes\\
\hline
$\alpha_{3}$ & $\approx 1.46557$ & $ t^3-t^2-1$  & $\approx 1.02952$ & w.r.\\
\hline
$\alpha_{4}$ & $\approx 1.38028$ & $ t^4-t^3-1$  & $\leq 2.74421$ & w.r.\\
\hline
$\alpha_{5}$ & $\approx 1.32472$ & $ t^3-t-1$  & $\approx 1.37858$ & ? \\
\hline
\hline
\end{tabular}
\vspace{0.3cm}
\caption{The table shows the $11$ special Pisot numbers. For each $i \in \{-5, \ldots, 5\}$ we have $\beta(\alpha_i)=\alpha_{-i}$. All but $\alpha_0$ are algebraic units and therefore induce tent-tiles.
The penultimate column shows the Hausdorff dimension of the boundary. The last column displays whether the tent-tile induces a lattice tiling, where ``w.r.'' means with reflection, i.e., the lattice tiling also involves the reflected tent-tile. For the two special Pisot numbers marked with ``$?$'' we could not find any type.}\label{specialpisotnumbers}
\end{table}

As the main tool for obtaining our results we show that tent-tiles are closely related with Rauzy fractals.
These fractals are induced by endomorphisms of the free monoid, so called substitutions, and are, technically speaking,
the union of the invariant sets of graph directed iterated function systems (GIFS for short) as discussed, for example, in~\cite{Mauldin-Williams:88}.
This class of fractals is named after Gerard Rauzy who
studied the first such object in 1982 \cite{Rauzy.1982}. Rauzy's approach has been generalised in
\cite{Arnoux-Ito.2001, Ei-Ito-Rao.2006, Ito-Rao.2006}.
Since then Rauzy fractals have been studied in innumerable researches.
An overview can be obtained from more comprehensive discourses and survey articles \cite{Berthe-Siegel-Thuswaldner.2010, Berthe-Siegel.2005, Fogg.2002, Siegel-Thuswaldner.2009}.

One more time we will see how different the particular tent-tiles behave. For our analysis we have to consider four classes of substitutions. We will see that some tent-tiles are given by the induced Rauzy fractals, other tent-tiles are related with the respective Rauzy fractals.  More precisely, the tent-tiles can be obtained by unifying only particular (and sometimes also translated) elements of the invariant set list of the associated GIFS.

Actually, it is only half of the story when we speak about substitutions in context with tent-tiles. In three of the four cases mentioned above there is a much more general theory beyond, namely, dynamics induced by \emph{automorphisms of the free group}.
This theory is a direct extension of substitutions and also induce fractal representations which are,  in fact, Rauzy fractals (with respect to certain substitutions derived from the automorphism).
For a detailed discussion on this topic we refer to \cite{Bestvina-Feighn-Handel.2000,Arnoux-Berthe-Hilion-Siegel.2006}.
The theory is quite complex and a proper introduction would go beyond the scope of this article.
In order to keep our discourse as self-contained as possible we manage to formulate and proof all results in terms of substitutions and
use only
some minor results from \cite{Arnoux-Berthe-Hilion-Siegel.2006} that can be easily understood without knowing the theory beyond.
However, we put several remarks within the paper that contain references and links to the theory of automorphisms of the free group.

As  mentioned above, tent-tiles already have been studied by Lagarias~et.~al.~\cite{Lagarias-Porta-Stolarsky:94} although they were not referred to as ``tent-tiles'' in that article. Our results (specifically Theorems~\ref{positivemeasure}, \ref{tiling1}, \ref{tiling3}, \ref{tiling-1}, \ref{tiling-3}, \ref{TT5} and \ref{TT-5})  give a positive answer to \cite[Conjecture 2.4]{Lagarias-Porta-Stolarsky:94}.
It is also remarkable that the authors mention Rauzy's construction \cite{Rauzy.1982} for comparison. The relation between Rauzy fractals and tent-tiles  that we prove here shows that their intuition was correct. Note that due to a slight programming mistake that occurred at the time, the figures in \cite{Lagarias-Porta-Stolarsky:94} differ from the (correct) ones in the present article (\cite{Lagarias.c2025}). 

The article is organised in the following way. In Section~\ref{sec:preliminaries} we state notations concerning special Pisot numbers and GIFS (which are generalisations of IFS) and use them to 
give a proper definition of the tent-tiles. We also describe what we mean by a tiling. Furthermore, we completely discuss the 
one-dimensional tent-tiles associated with $\alpha_2$ and $\alpha_{-2}$.
In Section~\ref{sec:Rauzy} we introduce the necessary theory of substitutions and Rauzy fractals 
and recall some important results concerning their topological properties. 
In Section~\ref{sec:iwip} we 
state the exact relations between  tent-tiles and Rauzy fractals induced by certain classes of substitutions.
Finally, in Section~\ref{sec:tiling} we use the relations that we elaborated in order to prove our main results.

\end{section}


\begin{section}{Preliminaries}\label{sec:preliminaries}

\subsection{Special Pisot numbers}

Let $\alpha$ be a an algebraic integer of degree $d+1$ and denote by $\alpha^{(1)}, \ldots, \alpha^{(d)}$
the Galois conjugates different from $\alpha$. We say that $\alpha$ is a Pisot number if $\alpha$ is a positive real number and $\abs{\alpha^{(k)}} <1$ for all $k = 1, \ldots, d$. This automatically implies that
$\alpha>1$.

Now, let $\beta=\beta(\alpha):=\alpha(\alpha-1)^{-1}$. Following \cite{Lagarias-Porta-Stolarsky:93} we call $\alpha$ a special Pisot number if both $\alpha$ and $\beta$ are Pisot numbers. Observe that special Pisot numbers appear pairwise. Indeed, from $\beta(\beta(\alpha)))= \alpha$ we conclude that
if $\alpha$ is a special Pisot number then $\beta(\alpha)$ is a special Pisot number, too.

The most obvious special Pisot number is $\alpha=2$. In this case we have $\beta=\alpha$. Smyth~\cite{Smyth.1999} showed that there exist exactly ten further special Pisot numbers that are, in fact, algebraic units.
Table~\ref{specialpisotnumbers} lists the special Pisot numbers and their respective minimal polynomials.
An important observation is that all of these five (unordered) pairs $\{\alpha,\beta\}$ 
are not algebraically independent which means that there exist certain positive integers
$m_\alpha, m_\beta$ such that $\alpha^{m_\alpha} = \beta^{m_\beta}$. The particular dependencies
are collected in Table~\ref{dependencies}. We remark that there exist further algebraic dependencies within the set of special Pisot numbers, for example, $\alpha_5^2=\alpha_1$. However, we will not go into more details since these additional dependencies have no relevance in the present article.

\begin{table}[h]
\begin{tabular}{||l|c|c|c|c|c||}
\hline
\hline
Number & $\alpha_1$ & $\alpha_2$& $\alpha_3$& $\alpha_4$& $\alpha_5$ \\
\hline
$m_i$ & $3$ & $2$ & $3$ & $4$ & $5$\\
\hline
Number & $\alpha_{-1}$ & $\alpha_{-2}$& $\alpha_{-3}$& $\alpha_{-4}$& $\alpha_{-5}$ \\
\hline
$m_i$ & $2$ & $1$ & $1$ & $1$ & $1$ \\
\hline
\hline
\end{tabular}
\vskip 0.2cm
\caption{The table shows the  algebraic dependencies between the pairs of special Pisot numbers. For each $i \in \{-5, \ldots, 5\}$ we have $\beta(\alpha_i)=\alpha_{-i}$ and $\alpha_{i}^{m_i} = \alpha_{-i}^{m_{-i}}$.}\label{dependencies}
\end{table}

\subsection{Graph directed iterated function systems}

Let $\mathbb{E}^d$  be a $d$-dimensional Euclidean space  and denote by $\nm{\cdot}$ the  Euclidean norm. 
A contraction in $\mathbb{E}^d$ is a function $f: \mathbb{E}^d \longrightarrow \mathbb{E}^d$ such that for each $\bfx \not= \bm{0}$ we have $\nm{f(\bfx)} <  \nm{\bfx}$.
A (finite) multigraph $G=G(V,S,E)$ is determined by a finite set of vertices $V$, a set of labels $S$, and 
a set of edges $E \subset V \times S \times V$, where each $(v_1,s,v_2) \in E$ represents an edge from $v$ to $v'$ labelled by $s$. We denote this edge by $v \xrightarrow{s} v'$.

By a realisation of $S$ in $\mathbb{E}^d$ we mean an operator $f$ that assigns to each label $s \in S$ a function
$f_s: \mathbb{E}^d \longrightarrow \mathbb{E}^d$.
A pair $(G,f)$ consisting of a multigraph $G$ and a realisation $f$ is called a
{\it graph-directed iterated function system} (GIFS for short).

\begin{proposition}[{{\it cf}. \cite[Theorem~1]{Bandt.1989b}, \cite[Theorem~1]{Mauldin-Williams:88}}]\label{GIFSpropo}
Let $(G(V,S,E),f)$ be a GIFS such that for each edge $v \xrightarrow{s} v'$ in $G$ the function $f_{s}$ is a contraction.
Then there exists a uniquely determined list $\{X_v: v \in V\}$ of compact sets in $\mathbb{E}^d$ that satisfies 
the system of equations 
\begin{equation}\label{setequations}
X_v = \bigcup_{v \xrightarrow{s} v' \in E} f_s(X_{v'}) \qquad (v \in V).
\end{equation}
The list $\{X_v: v \in V\}$ is frequently referred to as \emph{invariant set list}.
\end{proposition}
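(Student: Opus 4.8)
The statement is the graph-directed analogue of Hutchinson's theorem, and the plan is to realise the invariant set list as the unique fixed point of a Hutchinson-type operator acting on tuples of compact sets, via the Banach fixed point theorem. As a preliminary step I would remove the degenerate vertices. Call $v \in V$ \emph{transient} if no infinite walk in $G$ starts at $v$, and let $V_0$ be the set of transient vertices. Walking backwards along the (necessarily finite) walks issuing from a transient vertex shows that $X_v = \emptyset$ is forced for every $v \in V_0$ in any solution of~\eqref{setequations}, and that this choice is consistent; moreover every $v \notin V_0$ has an outgoing edge ending again in $V \setminus V_0$ (otherwise every walk from $v$ would be transient after one step, so $v \in V_0$), and the edges of $E$ that land in $V_0$ contribute $f_s(\emptyset) = \emptyset$ to~\eqref{setequations} and may be discarded. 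Hence it suffices to prove the claim for the induced sub-GIFS on $V \setminus V_0$, in which \emph{every vertex has at least one outgoing edge}; from now on I assume $G$ has this property and keep the names $V$, $E$.

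Let $\mathcal K = \mathcal K(\RR^d)$ denote the set of non-empty compact subsets of $\RR^d$ equipped with the Hausdorff metric $d_H$, a classically complete metric space. On the finite product $\mathcal K^V$ put the (again complete) metric
\[
\rho\big((X_v)_{v\in V},\, (Y_v)_{v\in V}\big) \;=\; \max_{v \in V} d_H(X_v, Y_v),
\]
and define $F : \mathcal K^V \to \mathcal K^V$ by
\[
F\big((X_v)_{v\in V}\big) \;=\; \Big(\, \textstyle\bigcup_{(w,s,v)\in E} f_s(X_v) \,\Big)_{w\in V}.
\]
Because every vertex has an outgoing edge, because a contraction $f_s$ is continuous and hence maps compact sets to compact sets, and because a finite union of compact sets is compact, $F$ is well defined with values in $\mathcal K^V$; and $\{X_v : v \in V\}$ solves~\eqref{setequations} precisely when $(X_v)_{v\in V}$ is a fixed point of $F$.

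The core of the argument is that $F$ is a contraction for $\rho$. I would invoke two standard properties of the Hausdorff metric: $d_H(g(A), g(B)) \le L\, d_H(A, B)$ whenever $g$ is $L$-Lipschitz, and $d_H\big(\bigcup_i A_i, \bigcup_i B_i\big) \le \max_i d_H(A_i, B_i)$ for finite index sets. By hypothesis each $f_s$ is a contraction of the finite-dimensional space $\RR^d$, hence is $c_s$-Lipschitz with $c_s < 1$; since $E$ is finite, $c := \max_s c_s < 1$. Combining the two properties gives, for every $w \in V$,
\[
d_H\big(F(\mathbf X)_w,\, F(\mathbf Y)_w\big) \;\le\; \max_{(w,s,v)\in E} d_H\big(f_s(X_v),\, f_s(Y_v)\big) \;\le\; \max_{(w,s,v)\in E} c_s\, d_H(X_v, Y_v) \;\le\; c\,\rho(\mathbf X, \mathbf Y),
\]
where $\mathbf X = (X_v)_{v\in V}$ and $\mathbf Y = (Y_v)_{v\in V}$; taking the maximum over $w$ yields $\rho(F(\mathbf X), F(\mathbf Y)) \le c\,\rho(\mathbf X, \mathbf Y)$. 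The Banach fixed point theorem then provides a unique fixed point $\{X_v : v \in V\}$ of $F$ in $\mathcal K^V$, and together with the first paragraph this is the unique invariant set list of the original GIFS.

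This is precisely the reasoning underlying the cited results of Bandt and of Mauldin--Williams, and there is no genuine obstacle. The only two points that want a little care are the bookkeeping with the transient vertices in the first paragraph and the passage from ``$f_s$ is a contraction'' to ``$f_s$ has a Lipschitz ratio $c_s < 1$'', uniformly over the finitely many labels — it is here that finiteness of $E$ (and compactness of the unit sphere of $\RR^d$) are used.
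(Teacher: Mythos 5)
Your proposal is correct and is, in substance, the proof behind the paper's citation: the paper does not prove Proposition~\ref{GIFSpropo} itself but refers to Bandt and to Mauldin--Williams, whose argument is exactly your Hutchinson-type operator on tuples of non-empty compact sets, contracting for the Hausdorff metric, plus the Banach fixed point theorem; your preliminary reduction removing vertices from which no infinite walk emanates is the right way to accommodate the paper's formulation, which does not guarantee outgoing edges.

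Two caveats are worth recording. First, uniqueness can only hold among lists that are non-empty at the non-transient vertices: the all-empty list always satisfies \eqref{setequations}, so ``compact'' in the statement must be read as ``non-empty compact'' there (this is clearly the intended reading, compare the formulation of the attractor in the introduction), and your closing claim of uniqueness ``for the original GIFS'' should be qualified accordingly, since the Banach argument gives uniqueness only within the space of non-empty compact tuples over $V\setminus V_0$. Second, the aside that compactness of the unit sphere upgrades ``$f_s$ is a contraction'' to ``$f_s$ has Lipschitz ratio $c_s<1$'' is valid only for linear maps (and hence for the affine maps actually used in the paper); for a general map, being merely distance-decreasing does not yield a Lipschitz ratio below $1$, and the conclusion can genuinely fail --- for instance $x\mapsto\sqrt{x^2+1}$ on $\RR$ decreases all distances, yet the one-vertex, one-loop system $X=f(X)$ has no non-empty compact solution. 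So the hypothesis has to be read as: each $f_s$ is Lipschitz with ratio $c_s<1$, which is what the cited results assume and what your fixed point estimate actually uses; with that reading your argument is complete.
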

The uniqueness of the invariant set list will play an important role in the present article.

Observe that an iterated function system (IFS) in the sense of \cite{Hutchinson.1981} corresponds to  GIFS $(G,p)$ 
where the graph $G$ has only one vertex. Correspondingly, the invariant set list of an IFS consists, in fact, of one invariant set only.

\subsection{Definition of tent-tiles}\label{ssec:tent-tiles}

Consider a special Pisot unit $\alpha$. We denote by $d+1$ the algebraic degree of $\alpha$. Let $\alpha^{(1)}, \ldots, \alpha^{(r)}$ be the real Galois conjugates of $\alpha$ and
$\alpha^{(r+1)}, \overline{\alpha^{(r+1)}} \ldots, \alpha^{(r+s)}, \overline{\alpha^{(r+s)}}$ be the complex ones, i.e. $d=r+2s$.
The  Tent-tiles $\Fa$ associated with $\alpha$ is defined as the set fixed by a graph directed construction in the Euclidean space
\[\KK_\alpha  =\RR^r \times \CC^s \cong \RR^d.\]
We consider the set of labels $\{L,R\}$.
For defining the realisation we let for $z =P(\alpha) \in \QQ(\alpha)$ and $k \in \{1, \ldots, r+s\}$ denote $z^{(k)} =P(\alpha^{(k)}) \in \QQ(\alpha^{(k)})$ the respective Galois conjugate, especially $z^{(k)} \in \RR$ if $k\leq r$  and $z^{(k)} \in \CC$ if $k>r$. Define the embedding
\begin{equation}\label{def:psi}
\psi: \QQ(\alpha) \longrightarrow \KK_\alpha, z \longmapsto (z^{(1)}, \ldots, z^{(r+s)})^T.
\end{equation}
Furthermore, for an $z \in \QQ(\alpha)$ we let $\bfPhi{z}$ denote the linear operator
\[\bfPhi{z}: \KK_\alpha \longrightarrow \KK_\alpha, (x_1, \ldots, x_{r+s})  \longmapsto (z^{(1)}\cdot x_1, \ldots, z^{(r+s)}\cdot x_{r+s})^T.
\]
Note that for all $z_1, z_2 \in \QQ(\beta)$ we have
\begin{align*}
\bfPsi{z_1 z_2} &= \bfPhi{z_1} \left(\bfPsi{z_2}\right), \\
\bfPhi{z_1 z_2} &= \bfPhi{z_1} \circ \bfPhi{z_2}.
\end{align*}
We define the realisation $f$ of $\{L,R\}$ in $\RR^d$ by
\begin{align*}
\fL:& \KK_\alpha \longrightarrow \KK_\alpha,  \bfx \longmapsto \bfPhi{\alpha}(\bfx), \\
\fR:& \KK_\alpha \longrightarrow \KK_\alpha,  \bfx \longmapsto \bfPhi{\beta}(\bfPsi{1}-\bfx).
\end{align*}
Note that we have
\begin{align*}
\fL^{-1}:& \KK_\alpha \longrightarrow \KK_\alpha, \bfx \longmapsto \bfPhi{\alpha^{-1}}(\bfx), \\
\fR^{-1}:& \KK_\alpha \longrightarrow \KK_\alpha, \bfx \longmapsto \bfPsi{1}-\bfPhi{\beta^{-1}}(\bfx).
\end{align*}
Observe  that $\fL$ as well as $\fR$ are contractions since $\alpha$ is a special Pisot number.

\begin{definition}\label{deftenttile}
Let $\alpha$ be a special Pisot number. The {\it tent-tile} $\Fa \subset \KK_\alpha$  associated with $\alpha$ is the invariant set of the
GIFS $(G_1,f)$, where $G_1=G(V,\{L,R\},E)$ with $V:=\{0\}$ and  $E:=\{0 \xrightarrow{L} 0, 0 \xrightarrow{L} 0\}$ (see Figure~\ref{tentgraph}).
\end{definition}
Observe that tent-tiles are uniquely determined up to a linear isomorphism since the embedding $\psi$ depends on the particular ordering of the Galois conjugates of $\alpha$. 
\begin{figure}[ht]
\begin{tikzpicture}

\node[circle,draw=blue!60, very thick, minimum size=10mm] (v0) {$0$};

every edge quotes/.style = {font=\footnotesize, shorten >=1pt},

\draw[->,black, very thick, loop right] (v0) edge  node[pos=0.5, above=0pt] {L} (v0);
\draw[->,black, very thick, loop left] (v0) edge  node[pos=0.5, above=0pt] {R} (v0);

\end{tikzpicture}
\caption{The tent-tiles are given by the invariant set of the GIFS determined by the depicted graph.}
\label{tentgraph}
\end{figure}
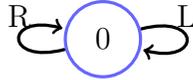

\subsection{Tilings}

We consider a $d$-dimensional Euclidean space $\mathbb{E}^d$. A collection $\{Y_i: i \in I\}$ of compact subsets of $\mathbb{E}^d$ is a multi-tiling of
$\mathbb{E}^d$ if 
\begin{enumerate}
\item $\bigcup_{i \in I} Y_i = \mathbb{E}^d$ (covering property);
\item for each compact subset $Y$ of $\mathbb{E}^d$ the set $\{i \in I: Y \cap Y_i \not= \emptyset\}$ is finite (local finiteness);
\item there exists a positive integer $c$ such that for almost all $\bfx \in \mathbb{E}^d$ (with respect to the $d$-dimensional Lebesgue-measure)
the set $\{i \in I: \bfx \in Y_i\}$ has exactly $c$ elements.
\end{enumerate}
The integer $c$ is the so-called {\it covering degree}. If $c=1$ then we say that the multi-tiling is a 
proper tiling or just a tiling.

\subsection{One dimensional tent-tiles}
We have two special Pisot units whose associated tent-tile is a subset of the real line: $\alpha_{-2}$ and $\alpha_{2}$. For these two cases the situation is quite clear and can be shown directly.
\begin{theorem}[{cf.~\cite[Corollary~2.3a]{Lagarias-Porta-Stolarsky:94}}]\label{MT2-2}
Let $\phi=\frac{1+\sqrt{5}}{2}$ be the golden ratio.
Then
\begin{align*}
\F_{\alpha_{-2}} & =[-\phi,0] \in \KK_{\alpha_{-2}} = \RR, \\
\F_{\alpha_{2}} & =[\nicefrac{(1-\phi)}{2},\nicefrac{1}{2}]\in \KK_{\alpha_{2}} = \RR.
\end{align*}
\end{theorem}
\begin{proof}
Due to the uniqueness of the invariant set list it suffices to show that
the stated intervals satisfy the set equation
induced by the  GIFS $G_1$ from Definition~\ref{deftenttile}.
\end{proof}

We immediately see that $\F_{\alpha_{-2}}$ and $\F_{\alpha_{2}}$ have a positive length (i.e., positive $1$-dimensional Lebesgue measure) and ${\rm dim}_H(\partial \F_{\alpha_{-2}})=0$ as well as ${\rm dim}_H(\partial \F_{\alpha_{2}})=0$. Furthermore, the collections $\{u_1\phi+\F_{\alpha_{-2}} : u_1 \in \ZZ\}$ and $\{u_1\frac{\phi}{2}+\F_{\alpha_{2}} : u_1 \in \ZZ\}$ obviously tile the real line in a periodic way.

For higher dimensional tent-tiles we need a more sophisticated strategy to prove our results. 

\end{section}


\begin{section}{Rauzy fractals}\label{sec:Rauzy}

In this section  we properly define Rauzy fractals and present some of their properties that we need in order to prove our results.
There exist several (equivalent) definitions. In the actual one we define Rauzy fractals as invariant sets of particular IFS where we  makes use of the formalism that we introduced in context with tent-tiles.  This approach will turn out to be the most convenient one in our further proceeding.

Let $\A=\{0, \ldots, m\}$ be an alphabet and $\A^*$ the set of finite words over $\A$,
which includes the empty word $\varepsilon$ too.
For a word $W \in \A^*$ and $a \in \A$ we let $\abs{W}_a$ denote the number of occurrences of $a$ in $W$. Furthermore, we define 
$\bfell: \A^* \longmapsto \RR^m$ by
\[\bfell(W) = (\abs{W}_0, \abs{W}_1, \ldots, \abs{W}_m)^T.\]

A substitution over $\A$ is a non-erasing morphism $\zeta: \A^* \longmapsto \A^*$. We say that $\zeta$ is primitive if there exists a power $k$ such that for each $a \in \A$ all entries of $\bfell(\zeta^k(a))$ are strictly positive.
Define the incidence matrix $\bfM_\zeta := (\bfell(\zeta(0)), \bfell(\zeta(1)), \ldots, \bfell(\zeta(m))) \in \RR^{(m+1)\times (m+1)}$ and observe that for each $W \in \A^*$ we have $\bfell(\zeta(W))= \bfM_\zeta \, \bfell(W)$.

We define the prefix-graph $\Gamma_\zeta$ in the following way. Its set of vertices is $V=V(\Gamma_\zeta):=\{0, \ldots, m\}$. The labels are the proper prefixes induced by the substitution, therefore 
$S:=\A^*$.
There is an edge from $a$ to $b$ labelled by $U$ if $\zeta(b)=UaV$ for a word $V \in \A^*$, hence
\[E=E(\Gamma_\zeta):=\left\{a \xrightarrow{U} b: \,  \exists V \in \A^*\text{ such that } \zeta(b)=UaV\right\}.\]
The matrix $\bfM_\zeta$ is the adjacency matrix of $\Gamma_\zeta$.

If $\zeta$ is a primitive substitution then $\Gamma_\zeta$ is strongly connected and $\bfM_\zeta$ is a primitive matrix. Due to the Perron-Frobenius theorem it possesses a dominant positive (real) eigenvalue
that we denote by $\lambda$. Let $d+1$ be the algebraic degree of $\lambda$ and $\lambda^{(1)}, \ldots, \lambda^{(d)}$ the Galois conjugates of $\lambda$. 
Depending on the shape of $\bf{M}_\zeta$ and its eigenvalues we define several attributes for the primitive substitution $\zeta$:
\begin{itemize}
\item $\zeta$ is an {\it irreducible} substitution if $m=d$;
\item $\zeta$ is {\it unimodular} if ${\rm det}(\bfM_\zeta)=\pm 1$;
\item $\zeta$ is a {\it Pisot substitution} if $\alpha$ is a Pisot number, that is $\max_{k \in \{1, \ldots, d+1\}} \abs{\lambda^{(k)}}<1$.
\end{itemize}
For our further proceeding we exclusively concentrate on unimodular Pisot substitutions. 

We let $\bfnu \in \QQ(\alpha)^{m+1}$ denote a left eigenvector of $\bfM_\zeta$ with respect to the dominant eigenvalue
$\lambda$. By Perron-Frobenius theorem we may assume that all entries of $\bfnu$ are strictly positive.

Assume that $\lambda^{(1)}, \ldots, \lambda^{(r)} \in \RR$ and
$\lambda^{(r+1)}, \overline{\lambda^{(r+1)}} \ldots, \lambda^{(r+s)}, \overline{\lambda^{(r+s)}} \in \CC\setminus \RR$ and set
\[\KK_\zeta = \RR^r \times \CC^s.\]
We consider 
the embedding
\[\pi: \QQ(\lambda) \longrightarrow \KK_\zeta, z \longmapsto (z^{(1)}, \ldots, z^{(r+s)})^T,\]
where $z^{(k)} \in \QQ(\lambda^{(k)})$ is the respective Galois conjugate of $z$ for each $k \in \{1, \ldots, r+s\}$.
Furthermore, let $h$ be the linear map
\[h: \KK_\zeta \longrightarrow \KK_\zeta, (x_1, \ldots, x_{r+s})  \longmapsto (\lambda^{(1)}\cdot x_1, \ldots, \lambda^{(r+s)}\cdot x_{r+s})^T.
\]
Obviously the fact that $\lambda$ is a Pisot unit implies $h$ to be a contraction
on $\KK_\zeta$. Note that for all $x \in \QQ(\lambda)$ we have $\pi(\lambda \cdot x)=h\circ \pi(x)$.

Now, the Rauzy fractal associated with the primitive unimodular Pisot substitution $\zeta$ is given by the GIFS $(\Gamma_\zeta,g)$ where the realisation $g$
assigns to each prefix $U \in S = \A^*$ the function
\[g_U: \KK_\zeta \longrightarrow \KK_\zeta, \bfx \longmapsto \pi\left(\spk{\bfnu,\bfell(U)}\right) + h(\bfx),\]
where $\spk{\cdot,\cdot}$ denotes the inner product.
Obviously $g_U$ is a contraction for any choice of $U$. Therefore, the GIFS $(\Gamma_\zeta,g)$ possesses an 
invariant set list that we denote by  $\R_\zeta(0), \ldots \R_\zeta(m) \subset \KK_\zeta$. The Rauzy fractal $\R_\zeta$ is the union
\begin{equation}\label{Rauzyunion}
\R_\zeta = \R_\zeta(0) \cup \ldots \cup \R_\zeta(m).
\end{equation}
Note that the Rauzy fractal $\R_\zeta$ is uniquely determined up to linear transformation that depends on the ordering of the Galois conjugates of the dominant root $\lambda$ and the actual choice of the eigenvector $\bfnu$.

\begin{remark}
Observe that the notions $\KK_\zeta$, $\pi$ and $h$ are completely analogous to  $\KK_\alpha$, $\psi$ and $\Psi[\alpha]$, respectively, that appear in context with the definition of tent-tiles.
The different way of notation has been chosen in order to highlight the different origins--tent-tiles are induced by a special Pisot unit $\alpha$, therefore the respective space is called $\KK_\alpha$, and  Rauzy fractals are induced by substitution $\zeta$, consequently we call the space $\KK_\zeta$. Eventually, tent-tiles and Rauzy fractals are different objects and we clearly separate them. In this way we also avoid to  constantly specify the dominant root of a substitution. 
However, later, in Section~\ref{sec:iwip}, we use the analogies in order to show equivalences between tent-tiles and Rauzy fractals induced by certain classes of substitutions.
\end{remark}

\begin{proposition}[{\cite[Theorem~4.1]{Sirvent-Wang.2002}}]\label{Rauzypositivemeasure}
Let $\zeta$ be a primitive unimodular substitution over the alphabet $\A$. Then for each letter $a \in \A$ the set $\R_\zeta(a)$ is
the closure of its interior.
\end{proposition}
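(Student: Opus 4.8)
The plan is to prove the two constituents of the statement separately: that each $\R_\zeta(a)$ has non-empty interior, and that it coincides with the closure of that interior. Since each $\R_\zeta(a)$ is compact, the inclusion $\overline{\interior \R_\zeta(a)} \subseteq \R_\zeta(a)$ is automatic, so only the two positive assertions require work.

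For the non-emptiness of the interior I would run a Baire category argument on a covering of $\mathbb{K}_c$ by lattice translates of $\R_\zeta$. Put $\Lambda := \pi(\ZZ^{m+1})$, the subgroup of $\mathbb{K}_c$ generated by all $\pi(\bfell(W))$, $W \in \A^*$. Since $\zeta$ is unimodular, $\bfM_\zeta^{-1}$ has integer entries, so $h$ and $h^{-1}$ both map $\Lambda$ into itself, i.e. $h$ acts as an automorphism of $\Lambda$; moreover $\Lambda$ is dense in $\mathbb{K}_c \cong \RR^d$ because $\ker \pi = \mathbb{K}_e \oplus \mathbb{K}_s$ contains the Perron eigenvector of $\bfM_\zeta$, whose coordinates span a field of degree $d+1$ and hence satisfy no rational linear relation. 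Using the standard (Dumont--Thomas / prefix--suffix) numeration attached to the primitive substitution $\zeta$, every $\bfx \in \mathbb{K}_c$ admits an expansion $\bfx = \sum_{i \ge -N} h^{i}\bigl(\pi\bfell(U_i)\bigr)$ along an admissible walk in $\Gamma_\zeta$; its non-negative part $\sum_{i\ge 0} h^i\bigl(\pi\bfell(U_i)\bigr)$ lies in $\R_\zeta$ (it is exactly the type of series describing the invariant sets of the GIFS $(\Gamma_\zeta,g)$), while its finite ``integer part'' $\sum_{-N \le i < 0} h^{i}\bigl(\pi\bfell(U_i)\bigr)$ lies in $\Lambda$, by the unimodularity just invoked. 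Hence $\mathbb{K}_c = \bigcup_{\bm\lambda \in \Lambda}(\R_\zeta + \bm\lambda)$, a countable union of compact sets; by Baire's theorem one of them, and therefore (being a finite union) one of the pieces $\R_\zeta(a_0)$, has non-empty interior.

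To pass from $a_0$ to an arbitrary letter $a$, and at the same time to obtain ``closure of the interior'', I would iterate the system of equations \eqref{setequations}. For every $k \ge 1$,
\[
\R_\zeta(a) \;=\; \bigcup_{P} g_P\bigl(\R_\zeta(b_P)\bigr),
\]
the union ranging over all walks $P\colon a = b_0 \xrightarrow{U_1} b_1 \xrightarrow{U_2} \cdots \xrightarrow{U_k} b_k =: b_P$ of length $k$ in $\Gamma_\zeta$, where $g_P := g_{U_1}\circ\cdots\circ g_{U_k}$ is an affine bijection of $\RR^d$ with linear part $h^{k}$; in particular $g_P$ is an open map and $\operatorname{diam} g_P(\R_\zeta(b_P)) \le \abs{\lambda_1}^{k}\max_b \operatorname{diam}\R_\zeta(b)$. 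First, since $\zeta$ is primitive, $\Gamma_\zeta$ is strongly connected, so there is a walk $P$ from $a$ to $a_0$; the corresponding $g_P$ sends the non-empty open set $\interior \R_\zeta(a_0)$ to an open subset of $\R_\zeta(a)$, so $\interior\R_\zeta(a) \ne \emptyset$ for every $a$. Second, given $\bfx \in \R_\zeta(a)$ and $\delta > 0$, choose $k$ with $\abs{\lambda_1}^{k}\max_b \operatorname{diam}\R_\zeta(b) < \delta$ and a length-$k$ walk $P$ with $\bfx \in g_P(\R_\zeta(b_P))$; then $g_P(\interior\R_\zeta(b_P))$ is a non-empty open subset of $\R_\zeta(a)$ all of whose points lie within $\delta$ of $\bfx$, being contained in the set $g_P(\R_\zeta(b_P))$ of diameter $<\delta$ that contains $\bfx$. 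As $\delta$ was arbitrary, $\bfx \in \overline{\interior\R_\zeta(a)}$, which together with the trivial reverse inclusion gives the claim.

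Everything except the covering is routine point-set topology. The step I expect to be the crux, and the one where the hypotheses ``primitive'' and ``unimodular'' are really consumed, is the identity $\mathbb{K}_c = \bigcup_{\bm\lambda \in \Lambda}(\R_\zeta + \bm\lambda)$, equivalently the completeness of the Dumont--Thomas numeration for a general primitive unimodular Pisot substitution: unimodularity excludes contributions of non-archimedean places, so that the purely real fractal $\R_\zeta \subset \mathbb{K}_c$ is genuinely full-dimensional, while primitivity is what makes the numeration exhaustive. An alternative opening move is to first establish, via the Perron--Frobenius eigenvector of $\bfM_\zeta$ together with \eqref{setequations}, that $\abs{\R_\zeta(a)} > 0$ for every $a$ (with $\abs{\cdot}$ the $d$-dimensional Lebesgue measure), and then to invoke the structure theory of graph-directed integral self-affine tiles to upgrade positive measure to ``closure of the interior''; there, however, the positivity of the measure is the hard point.
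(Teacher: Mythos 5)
The paper does not prove this proposition at all --- it is imported verbatim as \cite[Theorem~4.1]{Sirvent-Wang.2002} --- so the comparison is really with the standard argument in that reference, and your proposal is essentially that argument: establish a countable covering of $\mathbb{K}_c$ by translates of $\R_\zeta$, apply Baire to get one piece with non-empty interior, then use strong connectedness of $\Gamma_\zeta$ (from primitivity) together with the iterated set equations and the contraction $h$ to propagate the interior to every letter and to every point. The second half of your write-up (openness of the affine bijections $g_P$, the diameter estimate $\operatorname{diam} g_P(\R_\zeta(b_P)) \to 0$, and the $\delta$-approximation of an arbitrary $\bfx \in \R_\zeta(a)$ by interior points) is complete and correct; note only that you are implicitly using the Pisot hypothesis throughout, which the statement omits but which is needed even to define the GIFS.

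The one place where your argument is a sketch rather than a proof is exactly the step you flag as the crux: the covering $\mathbb{K}_c = \bigcup_{\bm\lambda \in \Lambda}(\R_\zeta + \bm\lambda)$, i.e.\ the completeness of the prefix--suffix expansion for every point of $\mathbb{K}_c$. Asserting that ``every $\bfx \in \mathbb{K}_c$ admits an expansion along an admissible walk'' is equivalent to the conclusion you want; the usual way to close this is the inflation--subdivision argument showing that the self-replicating collection is relatively dense and invariant under $h^{-1}$ followed by subdivision. Within this paper you could instead simply invoke Proposition~\ref{SRtiling}: the self-replicating collection $\{\bfxi + \R_\zeta(a) : (\bfxi,a) \in \Xi_{\rm sr}\}$ is a (multiple) tiling, in particular a countable covering of $\mathbb{K}_c$ by translates of the pieces along $\pi(\ZZ^{m+1})$, which is all your Baire step needs. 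One further small slip: in the reducible case ($m > d$) the $m+1$ coordinates of the Perron eigenvector lie in a field of degree $d+1 < m+1$ and therefore \emph{do} satisfy nontrivial rational linear relations, so your density argument for $\Lambda$ as stated is wrong there; fortunately density of $\Lambda$ is not actually used anywhere --- only the covering is --- so this does not damage the proof.
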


The union \eqref{Rauzyunion} is not necessarily disjoint with respect to the $d$-dimensional Lebesgue measure.

We say that two letters $a_1$ and $a_2$ have {\it strong coincidence at prefixes}
if  there exists an integer $k \geq 1$, a letter $b \in \A$, and words $P_1, P_2, S_1, S_2 \in \A^*$ such that $\zeta^k(a_1)=P_1bS_1$, $\zeta^k(a_2)=P_2bS_2$ and $\bfell(P_1)=\bfell(P_2)$.

We say that two letters $a_1$ and $a_2$ have {\it weak coincidence at prefixes}.
if  there exists an integer $k \geq 1$, a letter $b \in \A$, and words $P_1, P_2, S_1, S_2 \in \A^*$ such that $\zeta^k(a_1)=P_1bS_1$, $\zeta^k(a_2)=P_2bS_2$ and $\spk{\bfnu,\bfell(P_1)}= \spk{\bfnu,\bfell(P_2)}$.

\begin{definition}[Coincidence condition]
A primitive unimodular Pisot substitution $\zeta$ over $\A$ satisfies the {\it strong coincidence condition} if
all letters have strong coincidence at prefixes, and $\zeta$  satisfies the {\it weak coincidence condition} if all letters have weak coincidence at prefixes.
\end{definition}
Observe that for irreducible substitutions the notions strong coincidence and weak coincidence are equivalent. 
It is conjectured that all  primitive irreducible unimodular Pisot substitution satisfy the strong coincidence condition (coincidence conjecture). Up to now this is only confirmed in the two-letter case \cite{Barge-Diamond.2002}.

The strong coincidence condition implies the union \eqref{Rauzyunion} to be (measure-theoretically) disjoint (see ~\cite{Arnoux-Ito.2001, Canterini-Siegel.2001b, Ei-Ito-Rao.2006, Ito-Rao.2006, Sirvent-Wang.2002}).  We show that the weak coincidence is also sufficient.
\begin{lemma}\label{coinc}
Let $\zeta$ be a primitive unimodular Pisot substitution and suppose that $a_1, a_2 \in \A$ have weak coincidence at prefixes. Then
$\R_\zeta(a_1)$ and $\R_\zeta(a_2)$ are measure-theoretically disjoint.
\end{lemma}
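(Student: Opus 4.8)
The plan is to reduce Lemma~\ref{coinc} to a single structural fact — that for every primitive unimodular Pisot substitution the GIFS subdivision of each Rauzy piece into its children is measure-disjoint — and then to exploit that the coincidence hypothesis puts two such children at the \emph{same} position in space. As a preliminary I would first record that $\R_{\zeta^k}(a)=\R_\zeta(a)$ for all $a\in\A$ and $k\ge 1$: iterating the defining set equation $k$ times writes $\R_\zeta(a)$ as a union, over the length-$k$ walks $a\xrightarrow{U_1}\cdots\xrightarrow{U_k}a_k$ in $\Gamma_\zeta$, of the sets $\sum_{j=1}^{k}h^{j-1}\!\bigl(\pi\circ\bfell(U_j)\bigr)+h^k\R_\zeta(a_k)$, and since these walks are in bijection with the edges of $\Gamma_{\zeta^k}$ and $\pi\circ\bfM_\zeta=h\circ\pi$ makes the accumulated translation equal $\pi\circ\bfell$ of the label of the corresponding $\Gamma_{\zeta^k}$-edge, the list $\{\R_\zeta(a)\}$ solves the set equations of $(\Gamma_{\zeta^k},g)$; uniqueness of the invariant set list (Proposition~\ref{GIFSpropo}) gives the identity. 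As $\zeta^k$ is again a primitive unimodular Pisot substitution, I may replace $\zeta$ by $\zeta^k$ and assume $k=1$, so that $\zeta(a_1)=P_1bS_1$, $\zeta(a_2)=P_2bS_2$ with $\pi\circ\bfell(P_1)=\pi\circ\bfell(P_2)$; I may also assume $a_1\ne a_2$ (the statement being empty otherwise — note $\R_\zeta(a)$ has positive measure by Proposition~\ref{Rauzypositivemeasure}).

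Next I would show the subdivision is measure-disjoint. With $\mu$ Lebesgue measure on $\mathbb{K}_c\cong\RR^d$ and $v_a:=\mu(\R_\zeta(a))\in[0,\infty)$, applying $\mu$ to $\R_\zeta(a)=\bigcup_{a\xrightarrow{U}c}\bigl(\pi\circ\bfell(U)+h(\R_\zeta(c))\bigr)$, using subadditivity, $\mu(h(X))=\abs{\det h}\,\mu(X)$ ($h$ is invertible), and the fact that $\Gamma_\zeta$ has $(\bfM_\zeta)_{ac}$ edges from $a$ to $c$, gives $\bm v\le\abs{\det h}\,\bfM_\zeta\,\bm v$ componentwise, where $\bm v=(v_a)_{a\in\A}$. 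Now $\abs{\det h}=\abs{\lambda_1\cdots\lambda_d}$, and unimodularity forces $\lambda_0$ to be an algebraic unit — the characteristic polynomial of $\bfM_\zeta$ has constant term $\pm1$ and the minimal polynomial of $\lambda_0$ is a monic integer factor of it — hence $\lambda_0\lambda_1\cdots\lambda_d=\pm1$ and $\abs{\det h}=\lambda_0^{-1}$. Thus $\lambda_0\,\bm v\le\bfM_\zeta\,\bm v$; left-multiplication by the strictly positive left Perron--Frobenius eigenvector $\bm u$ of $\bfM_\zeta$ yields $\lambda_0\,\bm u\bm v\le\bm u\bfM_\zeta\bm v=\lambda_0\,\bm u\bm v$, so $\bm u(\bfM_\zeta\bm v-\lambda_0\bm v)=0$ with both factors non-negative, forcing $\bfM_\zeta\bm v=\lambda_0\bm v$. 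Therefore the subadditivity estimate is an equality for every $a$; and since equality in $\mu(\bigcup_iY_i)\le\sum_i\mu(Y_i)$ for finitely many compact sets forces all pairwise intersections to be $\mu$-null, I conclude that for distinct edges $a\xrightarrow{U}c$, $a\xrightarrow{U'}c'$ of $\Gamma_\zeta$ the children $\pi\circ\bfell(U)+h(\R_\zeta(c))$ and $\pi\circ\bfell(U')+h(\R_\zeta(c'))$ are measure-disjoint.

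Finally I would conclude. The occurrences $\zeta(a_1)=P_1bS_1$ and $\zeta(a_2)=P_2bS_2$ determine edges $b\xrightarrow{P_1}a_1$ and $b\xrightarrow{P_2}a_2$ of $\Gamma_\zeta$, distinct because $a_1\ne a_2$; applying the previous paragraph at the vertex $b$,
\[\mu\Bigl(\bigl(\pi\circ\bfell(P_1)+h(\R_\zeta(a_1))\bigr)\cap\bigl(\pi\circ\bfell(P_2)+h(\R_\zeta(a_2))\bigr)\Bigr)=0.\]
Because $\pi\circ\bfell(P_1)=\pi\circ\bfell(P_2)$, translating by $-\pi\circ\bfell(P_1)$ and using $h(\R_\zeta(a_1))\cap h(\R_\zeta(a_2))=h\bigl(\R_\zeta(a_1)\cap\R_\zeta(a_2)\bigr)$ gives $\mu\bigl(h(\R_\zeta(a_1)\cap\R_\zeta(a_2))\bigr)=0$, and invertibility of $h$ then yields $\mu(\R_\zeta(a_1)\cap\R_\zeta(a_2))=0$ — which, recalling $\R_{\zeta^k}(a_i)=\R_\zeta(a_i)$, is the claim.

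The step I expect to be the main obstacle is the second one: the point is to identify the contraction factor of the GIFS as exactly $\lambda_0^{-1}$ — equivalently, that $\lambda_0$ is an algebraic unit, which is where unimodularity genuinely enters — so that Perron--Frobenius upgrades the a priori inequality $\bm v\le\abs{\det h}\,\bfM_\zeta\,\bm v$ to an equality, and hence to measure-disjointness of the subdivision. The passage to a power and the affine bookkeeping of the last step are routine; one should only take care, in the reducible case, that $h$ — being the restriction of $\bfM_\zeta$ to $\mathbb{K}_c$ — has determinant $\lambda_1\cdots\lambda_d$ regardless of the eigenvalues living on the supplementary space $\mathbb{K}_s$.
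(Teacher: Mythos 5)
Your proposal is correct and follows the same route as the paper: reduce to $k=1$ via invariance of the Rauzy fractal under powers, observe that the two occurrences of $b$ give edges $b\xrightarrow{P_1}a_1$, $b\xrightarrow{P_2}a_2$ of $\Gamma_\zeta$ whose children in the subdivision of $\R_\zeta(b)$ are measure-disjoint, and then use $\pi\circ\bfell(P_1)=\pi\circ\bfell(P_2)$ together with invertibility of $h$ to transfer this to $\R_\zeta(a_1)\cap\R_\zeta(a_2)$. The only difference is that where the paper simply cites the proof of Theorem~2 of Berth\'e--Siegel for the measure-disjointness of the GIFS subdivision, you reprove it in-line via the standard Perron--Frobenius argument (using $\abs{\det h}=\lambda_0^{-1}$, which is exactly where unimodularity enters), and that argument is carried out correctly.
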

\begin{proof}
Observe that for all integers  $k \geq 1$ the substitution $\zeta^k$
induces an isomorphic Rauzy fractal as $\zeta$. Indeed, the prefix graph $\Gamma_{\zeta^k}$ corresponds to the graph obtained from $\Gamma_{\zeta}$ by
considering the paths of length $k$, the $M_{\zeta}$ and $M_{\zeta^k}$ have the same eigenspaces, and since $\QQ(\beta)=\QQ(\beta^k)$ we have $\KK_\zeta \cong \KK_{\zeta^k}$. Therefore we may assume, without loss of generality, 
that $\zeta(a_1)=P_1bS_1$ and  $\zeta(a_2)=P_2bS_2$ with $\spk{\bfnu,\bfell(P_1)}=\spk{\bfnu,\bfell(P_2)}$.
This implies that $(b,P_1,a_1)$ and $(b,P_2,a_2)$ are edges of the prefix-graph $\Gamma_\zeta$.
By definition, $\R_\zeta(b)$ satisfies the GIFS-equation
\[\R_\zeta(b) = \bigcup_{b \xrightarrow{U} a \in E(\Gamma_\zeta)} g_U(\R_\zeta(a)).\]
This union is measure-theoretically disjoint (see the proof of \cite[Theorem 2]{Berthe-Siegel.2005}) and
by the definition of the realisation $g$ this implies  $\R_\zeta(a_1)$ and $\R_\zeta(a_2)$ to be measure-theoretically disjoint.
\end{proof}

\begin{proposition}[{{\it cf}~\cite[Proposition 3.15]{Siegel-Thuswaldner.2009}}]\label{Rauzytiling}
Let $\zeta$ be an unimodular Pisot substitution such that the elements of the invariant set list $\{\R_\zeta(a): a \in \A\}$ have pairwise disjoint interior. Denote by $d+1$ the algebraic degree of the dominant eigenvalue and by $\bfnu$
an associated positive left eigenvector.
If there exist distinct letters $a_1, \ldots, a_{d+1} \in \A$ such that
\begin{multline}\label{qmc} 
\forall a \in \A: \pi\left(\spk{\bfnu,\bfell(a) - \bfell(a_{d+1})}\right) \in \\ \Xi_{\rm lat}:=\left\{\sum_{i=1}^d u_i \pi\left(\spk{\bfnu,\bfell(a_i) - \bfell(a_{d+1})}\right): u_1, \ldots, u_d \in \ZZ\right\}
\end{multline}
then the collection $\left\{\bfx + \R_\zeta: \bfx \in \Xi_{\rm lat}\right\}$ 
is a multi-tiling of the  space $\KK_\zeta$.
\end{proposition}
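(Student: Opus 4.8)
The plan is to verify the three defining properties of a tiling for the collection $\mathcal{C}:=\{\bfxi+\R_\zeta:\bfxi\in\Xi_{\rm lat}\}$; write $\mu$ for the $d$-dimensional Lebesgue measure on $\mathbb{K}_c\cong\RR^d$. Throughout I will use that, by Proposition~\ref{Rauzypositivemeasure} together with the hypothesis that the pieces $\R_\zeta(a)$ have pairwise disjoint interiors, $\R_\zeta=\bigcup_{a\in\A}\R_\zeta(a)$ is compact, equals the closure of its interior, and has $\mu(\R_\zeta)=:v>0$; that for each $a\in\A$ the subdivision $\R_\zeta(a)=\bigcup_{a\xrightarrow{U}b\in E(\Gamma_\zeta)}g_U(\R_\zeta(b))$ is $\mu$-essentially disjoint (the disjointness used in the proof of \cite[Theorem~2]{Berthe-Siegel.2005}, cf.\ the proof of Lemma~\ref{coinc}); and that primitivity of $\zeta$ makes $\Xi_{\rm lat}$ a full-rank lattice in $\mathbb{K}_c$. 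The local finiteness condition is then immediate, since $\R_\zeta$ is bounded and $\Xi_{\rm lat}$ is discrete, so only finitely many $\bfxi$ satisfy $(\bfxi+\R_\zeta)\cap Y\neq\emptyset$ for a given compact $Y$.

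I would reduce the covering property and the bounded-multiplicity property to a single statement: the (bounded, $\Xi_{\rm lat}$-periodic) multiplicity function $m(\bfx):=\#\{\bfxi\in\Xi_{\rm lat}:\bfx\in\bfxi+\R_\zeta\}$ is $\mu$-almost everywhere equal to a constant $c$. Granting this, integrating over a fundamental domain gives $c\cdot\operatorname{covol}(\Xi_{\rm lat})=\int m=v>0$, so $c\ge 1$; hence $\bigcup_{\bfxi\in\Xi_{\rm lat}}(\bfxi+\R_\zeta)$ has full measure, and being a locally finite union of closed sets it is closed, therefore equals $\mathbb{K}_c$ (the covering property); and then the almost-everywhere multiplicity equals the constant $c$. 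So everything comes down to showing $m$ is a.e.\ constant.

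For this the natural tool is the Poisson-summation criterion: with $f:=\mathbf 1_{\R_\zeta}$, the periodisation $\sum_{\bfxi\in\Xi_{\rm lat}}f(\cdot-\bfxi)=m$ has Fourier series $\operatorname{covol}(\Xi_{\rm lat})^{-1}\sum_{\bm\eta\in\Xi_{\rm lat}^{\ast}}\widehat f(\bm\eta)\,e^{2\pi i\langle\bm\eta,\cdot\rangle}$ over $\mathbb{K}_c/\Xi_{\rm lat}$, so $m$ is a.e.\ constant iff $\widehat f(\bm\eta)=0$ for every $\bm\eta\in\Xi_{\rm lat}^{\ast}\setminus\{0\}$. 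Fixing such an $\bm\eta$, the essentially disjoint subdivision and the form $g_U(\bfx)=\pi\circ\bfell(U)+h(\bfx)$ yield the functional equation
\[\widehat{\mathbf 1_{\R_\zeta(a)}}(\bm\eta)=\lvert\det h\rvert\sum_{b\in\A}\Bigl(\sum_{U:\,a\xrightarrow{U}b}e^{-2\pi i\langle\bm\eta,\pi\bfell(U)\rangle}\Bigr)\widehat{\mathbf 1_{\R_\zeta(b)}}(h^{\ast}\bm\eta),\]
i.e.\ $\vec v(\bm\eta):=\bigl(\widehat{\mathbf 1_{\R_\zeta(a)}}(\bm\eta)\bigr)_{a}=C(\bm\eta)\,\vec v(h^{\ast}\bm\eta)$ for a transfer matrix with $\lvert C(\bm\eta)_{ab}\rvert\le C(0)_{ab}=\lvert\det h\rvert(\bfM_\zeta)_{ab}$. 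Iterating $n$ times and using that $h^{\ast}$ is a contraction (so $(h^{\ast})^{n}\bm\eta\to 0$ and $\vec v((h^{\ast})^{n}\bm\eta)\to\vec v(0)=(\mu(\R_\zeta(a)))_a>0$), one gets $\vec v(\bm\eta)=0$ as soon as the products $\prod_{k=0}^{n-1}C((h^{\ast})^{k}\bm\eta)$ tend to $0$. This is where hypothesis~\eqref{qmc} enters: it forces $\langle\bm\eta,\pi\bfell(a)\rangle\equiv\langle\bm\eta,\pi\bfell(a_{d+1})\rangle=:t\pmod 1$ for \emph{every} $a\in\A$, hence $\langle\bm\eta,\pi\bfell(U)\rangle\equiv\lvert U\rvert\,t\pmod 1$, so the phases in $C(\bm\eta)$ depend only on $t$; a Wielandt/Perron--Frobenius comparison of the complex product with the nonnegative product $C(0)^{n}$ then gives contraction unless the phases are ``coherent'', and coherence together with $\bm\eta\in\Xi_{\rm lat}^{\ast}$ forces $\langle\bm\eta,\pi\bfell(a)\rangle\in\ZZ$ for all $a$; since $\pi$ sends $\bigoplus_a\ZZ\bfell(a)$ onto a dense subgroup of $\mathbb{K}_c$ (a standard consequence of the Pisot property), this yields $\bm\eta=0$, a contradiction. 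When $\zeta$ is irreducible one has $\{a_1,\dots,a_{d+1}\}=\A$, \eqref{qmc} is vacuous, and this is exactly \cite[Proposition~3.15]{Siegel-Thuswaldner.2009}; an equally valid route is therefore to quote that result for irreducible $\zeta$ and to observe that its proof uses irreducibility only through the congruences $\langle\bm\eta,\pi\bfell(a)\rangle\equiv t\pmod 1$ that \eqref{qmc} supplies in general.

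The genuinely hard point is the last one -- that the transfer products $\prod_{k}C((h^{\ast})^{k}\bm\eta)$ really vanish in the limit for $\bm\eta\in\Xi_{\rm lat}^{\ast}\setminus\{0\}$, equivalently, that there is no non-trivial eigenmode of modulus one. This rigidity is the heart of any proof that a unimodular Pisot Rauzy fractal induces a lattice (multiple) tiling; it is where unimodularity, the Pisot property, the density of $\pi\bigl(\bigoplus_a\ZZ\bfell(a)\bigr)$ in $\mathbb{K}_c$, and the precise design of $\Xi_{\rm lat}$ (so that \eqref{qmc} makes $\langle\bm\eta,\pi\bfell(\cdot)\rangle$ factor through word length) must all be used at once. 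Everything else -- local finiteness, the reduction of the covering and multiplicity conditions to constancy of $m$, and the derivation of the functional equation for $\widehat{\mathbf 1_{\R_\zeta(a)}}$ -- is routine.
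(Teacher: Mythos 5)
The paper does not actually prove Proposition~\ref{Rauzytiling}: it is taken over (``cf.'') from \cite[Proposition~3.15]{Siegel-Thuswaldner.2009}, whose proof runs through the self-replicating translation set and the dual-substitution formalism, not through harmonic analysis. So your Poisson-summation route is genuinely different in spirit. Your reductions are fine in outline: local finiteness from compactness of $\R_\zeta$ and discreteness of $\Xi_{\rm lat}$, the reduction of the covering and constant-multiplicity conditions to a.e.\ constancy of the multiplicity function $m$, and the functional equation for the Fourier transforms coming from the measure-disjoint GIFS subdivision are all legitimate steps.

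However, there is a genuine gap at the decisive point. The statement that for every nonzero $\bm{\eta}$ in the dual lattice the transfer-matrix products tend to zero (equivalently $\widehat{\mathbf{1}_{\R_\zeta}}(\bm{\eta})=0$) \emph{is} the multiple-tiling assertion; everything you call routine is routine precisely because all the content sits there. As written you only gesture at it: a ``Wielandt/Perron--Frobenius comparison \dots unless the phases are coherent'' is not an argument until you show that for $t\notin\ZZ$ the phase cancellations produce a uniform loss relative to $C(0)$ along the entire orbit $(h^{\ast})^{k}\bm{\eta}$. Note that condition~\eqref{qmc} makes the phases at step $k$ equal to $e^{-2\pi i\,\lvert\zeta^{k}(U)\rvert\,t}$, so excluding coherent behaviour amounts to a Pisot-type rigidity statement about $\lvert\zeta^{k}(U)\rvert\,t \bmod 1$ (the lengths grow like $\lambda_0^{k}$), and this is exactly the place where the known proofs invoke the self-replicating/dual-substitution machinery or a carefully executed Lagarias--Wang/Barge--Kwapisz style analysis; you never establish it. A second, smaller gap: you use throughout that $\Xi_{\rm lat}$ is a full-rank lattice (discrete, positive covolume, with a dual lattice). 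For irreducible $\zeta$ this follows from irreducibility of the characteristic polynomial (no nonzero rational vector can be annihilated by $\pi$), but in the reducible case the linear independence of the $d$ vectors $\pi(\bfell(a_i)-\bfell(a_{d+1}))$ is not automatic from the hypothesis as stated and must be argued -- it is part of the quotient-mapping-condition setup in the cited literature. Until these two points are supplied, the proposal does not prove the proposition.
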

Note that Condition~\eqref{qmc} is known as {\it quotient mapping condition} and is obviously satisfied if $\zeta$ is an irreducible substitution.

We refer to the above collection as the {\it lattice multi-tiling induced by $\zeta$} and we say that $\zeta$ possesses the tiling property if the induced lattice multi-tiling is a proper tiling. Observe that by the so-called Pisot conjecture irreducible (unimodular Pisot) substitutions always have the tiling property. This conjecture, which is one of the most famous open problems in context with Rauzy fractals, is extensively discussed in \cite{Akiyama-Barge-Berthe-Lee-Siegel:15}. Up to now it is confirmed for two letter substitutions \cite{Barge-Diamond.2002, Hollander-Solomyak.2003} and a class of substitutions related with beta-expansions \cite{Barge.2018}.

In order to show our results concerning the topology of tent-tiles we will use some tools elaborated in \cite{Siegel-Thuswaldner.2009}. We quickly state the most important facts.

At first we introduce another type of multi-tiling induced by the Rauzy fractal $\R_\zeta$, more precisely, by the sets  $\R_\zeta(0), \ldots, \R_\zeta(m)$. This multi-tiling is aperiodic and it is called the {\it self-replicated tiling}.

\begin{proposition}[{{\it cf}~\cite[Proposition 3.7]{Siegel-Thuswaldner.2009}}]\label{SRtiling}
Let $\zeta$ be an unimodular Pisot substitution over the alphabet $\A=\{0,\ldots, m\}$, denote by $\bfnu$
a positive left eigenvector associated  with the dominant eigenvalue, and
define the self-replicated translation set
\[\Xi_{\rm sr}:=\left\{\left(\pi\left(\spk{\bfnu,\bfxi}\right),a\right)
 \in \KK_\zeta \times \A: \bfxi \in \ZZ^{m+1},  0 \leq \spk{\bfnu,\bfxi} <\spk{\bfnu,\bfell(a)}\right\}.\]
Then the collection $\left\{\bfx + \R_\zeta(a): (\bfx,a) \in \Xi_{{\rm sr}}\right\}$
is a multi-tiling (the self-replicating multi-tiling) of the space $\KK_\zeta$.
\end{proposition}
Note that for irreducible substitutions the self-replicating multi-tiling is a proper tiling if and only if the lattice multi-tiling is proper (see \cite{Ito-Rao.2006}). An analogous statement for the reducible case (provided that Condition~\eqref{qmc} is satisfied)  does not seem to exist but
due to the lack of counterexamples one may conjecture that such an equivalence holds for reducible substitutions, too.
However, in the present article the covering degree of the self-replicating multi-tiling is not important.

We now introduce two graphs induced by the two types of multi-tilings, the self-replicating boundary graph
$\Gamma_\zeta^{({\rm sr})}$ and the lattice boundary graph $\Gamma_\zeta^{({\rm lat})}$. Originally these graphs are defined in \cite[Section~5.2]{Siegel-Thuswaldner.2009} where the interested reader finds a more general discussion. We adopt here the more compact definition given in \cite{Loridant-Messaoudi-Surer-Thuswaldner.2013}.

The vertices of the two boundary graph are contained in the set
\[\mathcal{D}:=\left\{[a_1,\pi\left(\spk{\bfnu,\bfxi}\right),a_2] \in  \A\times\KK_\zeta\times \A: \begin{array}{l} \bfxi \in \ZZ^{m+1},  \\(0 < \spk{\bfnu, \bfxi}) \text{ or } 
(\spk{\bfnu, \bfxi}= 0 \text{ and } a_1 < a_2)\end{array}\right\}.\]
Recall that $m+1=\abs{\A}$ is the size of the alphabet and that $\bfnu$ is a positive left eigenvector associated with the dominant eigenvalue
of $\zeta$. The labels are elements of $\KK_\zeta$.
\begin{definition}[{{\it cf}.~\cite[Definition~2.1]{Loridant-Messaoudi-Surer-Thuswaldner.2013}}]\label{def:srsG}
The \emph{self-replicating boundary graph} $\Gamma_\zeta^{({\rm sr})}$ and the \emph{lattice boundary graph} $\Gamma_\zeta^{({\rm lat})}$, respectively, are the largest graphs with the following properties
\begin{enumerate}
\item The vertices $[a_1,\bfx,a_2]$ are elements of $\mathcal{D}$ such that
\[\nm{\bfx} \leq \frac{2\max_{(a,U,b) \in E(\Gamma_\zeta)} \nm{\pi(\spk{\bfnu, \bfell(U)}) }}{1-\max_{k \in \{1, \ldots, r+s\}}\abs{\lambda^{(k)}}}.\]
\item There is an edge $e \xrightarrow{s} e'$ with $e = [a_1,\bfx,a_2]$, $e'=[b_1,\bfx',b_2]$ if and only
if either
\begin{itemize}
\item there exist
$a_1 \xrightarrow{U_1} b_1, a_2 \xrightarrow{U_2} b_2 \in E(\Gamma_\zeta)$ such that $h(\bfx')=\bfx' + \pi(\spk{\bfnu,\bfell(U_2)-\bfell(U_1)})$ or
\item there exist
$a_1 \xrightarrow{U_1} b_2, a_2 \xrightarrow{U_2} b_1 \in E(\Gamma_\zeta)$ such that 
$h(\bfx')=-\bfx' + \pi(\spk{\bfnu, \bfell(U_1)-\bfell(U_2)})$.
\end{itemize}
The respective label is given by
\[s=\begin{cases}\pi(\spk{\bfnu, \bfell(U_1)}), & 
\text{if } \spk{\bfnu, \bfell(U_1)} \leq \spk{\bfnu, \bfell(U_2) + \bfxi}; \\
\pi(\spk{\bfnu, \bfell(U_2)}) + \bfx, & \text{otherwise}. \end{cases}\]
where $\bfxi \in \ZZ^{m+1}$ such that $\pi(\spk{\bfnu, \bfxi}) = \bfx$.

\item Each vertex belongs to an infinite walk that starts from a vertex $[a_1,\bfx,a_2]$ with  $(\bfx,a_2) \in \Xi_{sr}$ (in the case of $\Gamma_\zeta^{({\rm sr})}$) or $\bfx \in \Xi_{lat}\setminus\{\mathbf{0}\}$ (in the case of $\Gamma_\zeta^{({\rm lat})}$).
\end{enumerate}
\end{definition}
Observe that by \cite[Proposition~5.5]{Siegel-Thuswaldner.2009} both $\Gamma_\zeta^{({\rm lat})}$ and $\Gamma_\zeta^{({\rm sr})}$ are well defined and finite. In accordance with our previous way of notation we let
$V(\Gamma_\zeta^{({\rm lat})})$ denote the set of vertices of the graph $\Gamma_\zeta^{({\rm lat})}$ and $E(\Gamma_\zeta^{({\rm lat})})$ its set of edges. Analogously, $V(\Gamma_\zeta^{({\rm sr})})$ and  $E(\Gamma_\zeta^{({\rm sr})})$ are the vertices and edges, respectively, of $\Gamma_\zeta^{({\rm sr})}$.

The lattice boundary graph and the self-replicated boundary graph contain valuable informations concerning the tiling property and the topology of Rauzy fractals. We let $\mu_{\rm lat}$ denote the dominant eigenvalue of the incidence matrix of the lattice boundary graph $\Gamma_\zeta^{({\rm lat})}$. Analogously, $\mu_{\rm sr}$ is the dominant eigenvalue of the incidence matrix of the self-replicated boundary graph $\Gamma_\zeta^{({\rm sr})}$. 
At first we state that these eigenvalues  allow us to verify whether the respective multi-tilings are proper.
\begin{proposition}[{{\it cf}.~\cite[Theorem~4.1]{Siegel-Thuswaldner.2009}}]\label{tilingproperty}
Let $\zeta$ be a unimodular Pisot substitution. Then the self-replicating multi-tiling is a proper tiling if and only if $\mu_{\rm sr} < \lambda$. If $\zeta$ satisfies Condition~\eqref{qmc} then the lattice multi-tiling  is a proper tiling if and only if $\mu_{\rm lat} < \lambda$. 
\end{proposition}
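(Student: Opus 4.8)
The plan is to reduce the statement to a Perron--Frobenius estimate for the incidence matrix of the boundary graph, and I will only spell out the self-replicating case, the lattice case being word for word the same with $\Xi_{\rm sr}$, $\Gamma_\zeta^{({\rm sr})}$, $\mu_{\rm sr}$ replaced by $\Xi_{\rm lat}$, $\Gamma_\zeta^{({\rm lat})}$, $\mu_{\rm lat}$ (and it is precisely Condition~\eqref{qmc} that makes $\Xi_{\rm lat}$ a genuine lattice, so that the lattice tiling, and this reduction, make sense). First I would invoke Proposition~\ref{SRtiling} and Proposition~\ref{Rauzypositivemeasure}: the self-replicating collection covers $\mathbb{K}_c$ with a well-defined finite covering degree $c\geq 1$ and each tile has positive Lebesgue measure, so by local finiteness the set of points lying in at least two tiles is a locally finite union of pairwise intersections of distinct tiles, whence $c=1$ if and only if every such intersection is Lebesgue-null. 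Translating an intersection of two distinct tiles back to the origin (and, if necessary, exchanging the two tiles) turns it into a set $B_v:=\R_\zeta(a_1)\cap(\bfxi+\R_\zeta(a_2))$ with $v=[a_1,\bfxi,a_2]\in\mathcal{D}$; since $\R_\zeta$ is bounded, only finitely many of these are non-empty and they all satisfy the norm bound of Definition~\ref{def:srsG}(1). Hence the self-replicating tiling is proper if and only if every $B_v$ is Lebesgue-null.

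Next I would identify the family $\{B_v\}$ with the invariant set list of the GIFS $(\Gamma_\zeta^{({\rm sr})},g)$. Substituting the defining relation $\R_\zeta(a)=\bigcup_{a\xrightarrow{U}b\in E(\Gamma_\zeta)}g_U(\R_\zeta(b))$ into $\R_\zeta(a_1)\cap(\bfxi+\R_\zeta(a_2))$ and recording, by means of the inner-product inequalities defining $\Xi_{\rm sr}$, which sub-tiles of $\R_\zeta(a_1)$ and of $\bfxi+\R_\zeta(a_2)$ can actually meet, one arrives exactly at the edge rules of Definition~\ref{def:srsG}(2) --- the two cases according to whether or not the roles of the two tiles get swapped --- while Condition~\ref{def:srsG}(3) merely discards the vertices with $B_v=\emptyset$. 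This is the boundary-graph construction of \cite[Section~5.2]{Siegel-Thuswaldner.2009}; the identical computation with $\Xi_{\rm lat}$ in place of $\Xi_{\rm sr}$ produces $\Gamma_\zeta^{({\rm lat})}$.

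With this at hand the easy implication is a volume estimate. Each $g_s$ acts on $\mathbb{K}_c\cong\RR^d$ as a translate of $h$, and $|\det h|=\prod_{k=1}^d|\lambda_k|=\lambda_0^{-1}$: the minimal polynomial of $\lambda_0$ divides the characteristic polynomial of $\bfM_\zeta$, whose constant term is $\pm\det\bfM_\zeta=\pm1$, so $\lambda_0$ is an algebraic unit and $\prod_{k=0}^d|\lambda_k|=1$. Writing $|X|$ for the $d$-dimensional Lebesgue measure of $X\subset\mathbb{K}_c$ and $\bfM_\zeta^{({\rm sr})}$ for the incidence matrix of $\Gamma_\zeta^{({\rm sr})}$, the non-negative vector $\bm{v}=(|B_v|)_v$ satisfies, edge by edge,
\[|B_v|\;\leq\;\sum_{v\xrightarrow{s}v'\in E(\Gamma_\zeta^{({\rm sr})})}|g_s(B_{v'})|\;=\;\lambda_0^{-1}\sum_{v\xrightarrow{s}v'}|B_{v'}|,\]
that is $\bfM_\zeta^{({\rm sr})}\bm{v}\geq\lambda_0\bm{v}$ componentwise. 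Iterating gives $(\bfM_\zeta^{({\rm sr})})^n\bm{v}\geq\lambda_0^n\bm{v}$; if $\mu_{\rm sr}<\lambda_0$ the left-hand side grows at most polynomially times $\mu_{\rm sr}^n$, which forces $\bm{v}=\bm{0}$. So $\mu_{\rm sr}<\lambda_0$ implies that all $B_v$ are null, hence the self-replicating tiling is proper.

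The hard part will be the converse. One first checks that $\mu_{\rm sr}\leq\lambda_0$ always holds: an out-edge of $\Gamma_\zeta^{({\rm sr})}$ at $[a_1,\bfxi,a_2]$ is built from a pair of out-edges of $\Gamma_\zeta$ at $a_1$ and at $a_2$, so the adjacency structure of $\Gamma_\zeta^{({\rm sr})}$ is dominated by that of $\Gamma_\zeta$, whose incidence matrix has Perron value $\lambda_0$. It then remains to show that $\mu_{\rm sr}=\lambda_0$ forces some $B_v$ to have positive measure --- this is where the measure inequality of the previous paragraph points the wrong way and genuine input is needed. The idea is that $(\Gamma_\zeta^{({\rm sr})},g)$ inherits an open set condition from the self-replicating tiling, so that its invariant sets carry positive, finite Hausdorff measure in the dimension pinned down by the pressure function of the affine family $\{h\}$; since every $g_s$ scales $d$-dimensional volume by the fixed factor $\lambda_0^{-1}$, that dimension equals the ambient dimension $d$ exactly when the combinatorial growth rate $\mu_{\rm sr}$ of the graph equals $\lambda_0$, and then the corresponding $B_v$ is a positive-measure overlap, so the tiling is improper. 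Making the open set condition and the pressure computation rigorous --- in particular coping with the fact that $h$ need not be a similarity once $d\geq 2$ and the conjugate moduli $|\lambda_1|,\dots,|\lambda_d|$ differ --- is the technical core of the argument, and it is carried out in \cite{Siegel-Thuswaldner.2009}; granting it, the lattice statement follows by exactly the same reasoning with $\Xi_{\rm lat}$, $\Gamma_\zeta^{({\rm lat})}$, $\mu_{\rm lat}$.
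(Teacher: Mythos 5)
The paper never proves this proposition: it is imported, with a ``cf.'', from \cite[Theorem~4.1]{Siegel-Thuswaldner.2009}, so there is no in-paper argument to measure your attempt against. On its own terms, your treatment of the implication $\mu_{\rm sr}<\lambda_0\Rightarrow$ proper is correct and is the standard one: reduce properness to the vanishing of the Lebesgue measures $|B_v|$ of the pairwise intersections via Proposition~\ref{SRtiling} and Proposition~\ref{Rauzypositivemeasure}, identify the $B_v$ with the invariant sets of $(\Gamma_\zeta^{({\rm sr})},g')$ via Lemma~\ref{limm1}, use unimodularity to get $|\det h|=\prod_{k=1}^d|\lambda_k|=\lambda_0^{-1}$, and conclude from the componentwise inequality $\bfM_\zeta^{({\rm sr})}\bm{v}\geq\lambda_0\bm{v}$ that $\bm{v}=\bm{0}$ whenever the spectral radius is below $\lambda_0$. (One bookkeeping point you gloss over: after translating a pair of tiles of the self-replicating tiling back to the origin, the resulting pair $(\bfxi,a_2)$ need not lie in $\Xi_{\rm sr}$ as Lemma~\ref{limm1} requires, so the normalisation to a vertex of $\mathcal{D}$ takes slightly more care than ``exchange the two tiles if necessary''.)

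There are, however, two genuine gaps. First, your justification of $\mu_{\rm sr}\leq\lambda_0$ does not work as stated: an out-edge of $\Gamma_\zeta^{({\rm sr})}$ at $[a_1,\bfxi,a_2]$ is built from a \emph{pair} of out-edges of $\Gamma_\zeta$, one at $a_1$ and one at $a_2$, so the out-degrees of the boundary graph are typically larger than those of $\Gamma_\zeta$ and its adjacency structure is not dominated by that of $\Gamma_\zeta$; the naive product bound only yields $\mu_{\rm sr}\leq\lambda_0^2$. The correct estimate counts, for each of the roughly $\lambda_0^n$ level-$n$ subtiles of $\R_\zeta(a_1)$, the boundedly many level-$n$ subtiles of $\bfxi+\R_\zeta(a_2)$ it can meet, which requires the local finiteness of the self-replicating tiling at every scale. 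Second, and more seriously, the converse (proper $\Rightarrow\mu_{\rm sr}<\lambda_0$) is not proved: you rightly observe that your volume inequality points the wrong way there, but the open-set-condition/pressure-function argument you sketch is neither carried out nor clearly the mechanism that makes the reference work (the argument in \cite{Siegel-Thuswaldner.2009} exploits that the GIFS decomposition of the intersections is measure-disjoint, so the measure vector satisfies an exact eigenvalue relation, combined with the path-counting bound above). Since you ultimately defer this half to the same citation the paper relies on, your write-up is acceptable as a ``cf.''-style justification mirroring the paper's own treatment, but it is not a self-contained proof of the stated equivalence.
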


For studying the dimension of the boundary of Rauzy fractals we concentrate on the self-replicating boundary graph.
This graph allows us to describe the intersections of the elements of the self-replicating multi-tiling  in terms of a GIFS. In particular, we define the realisation $g'$ that assigns to each $\bfy \in \KK_\zeta$ the linear map
\[g'_{\bfy}: \KK_\zeta \longrightarrow \KK_\zeta, \bfx \longmapsto \bfy+h(\bfx),\]
which is a contraction since $h$ is a contraction. 
The GIFS $(\Gamma_\zeta^{({\rm sr})}, g')$ induces an invariant set list $\{X_v: v \in V(\Gamma_\zeta^{({\rm sr})})\}$.
We have the following lemma.
\begin{lemma}[{cf.~\cite[Theorem 5.7]{Siegel-Thuswaldner.2009}}]\label{limm1}
Let $\zeta$ be a primitive unimodular Pisot substitution over the alphabet $\A$.
Then $[a_1, \bfx, a_2] \in V(\Gamma_\zeta^{({\rm sr})})$ if and only if $(\bfx,a_2) \in \Xi_{\rm sr}$ and $\R_\zeta(a_1) \cap (\bfx + \R_\zeta(a_2)) \not=\emptyset$.
In this case we have $X_{[a_1, \bfx, a_2]}=\R_\zeta(a_1) \cap  \bfx + \R_\zeta(a_2)$.
\end{lemma}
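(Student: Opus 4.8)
The plan is to realise the collection of non-empty intersections $\R_\zeta(a_1)\cap(\bfxi+\R_\zeta(a_2))$ as the invariant set list of the GIFS $(\Gamma_\zeta^{({\rm sr})},g')$ and then to invoke the uniqueness statement of Proposition~\ref{GIFSpropo}. For a triple $[a_1,\bfxi,a_2]\in\mathcal{D}$ put $Y_{[a_1,\bfxi,a_2]}:=\R_\zeta(a_1)\cap(\bfxi+\R_\zeta(a_2))$ and let $\mathcal{B}$ be the set of those $[a_1,\bfxi,a_2]\in\mathcal{D}$ for which $(\bfxi,a_2)\in\Xi_{\rm sr}$ and $Y_{[a_1,\bfxi,a_2]}\ne\emptyset$; the goal is to show $\mathcal{B}=V(\Gamma_\zeta^{({\rm sr})})$ together with $X_v=Y_v$ for $v\in\mathcal{B}$. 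The first step is to establish the \emph{set equation}. As $\zeta$ is unimodular, $\bfM_\zeta$ is invertible over $\ZZ$, so $h$ is a linear bijection of $\mathbb{K}_c$ and $\pi\circ\bfell(U)\in\pi(\ZZ^{m+1})$ for every prefix $U$. Intersecting the defining GIFS equation $\R_\zeta(a)=\bigcup_{a\xrightarrow{U}b\in E(\Gamma_\zeta)}g_U(\R_\zeta(b))$ for $a=a_1$ with the same equation translated by $\bfxi$ for $a=a_2$, and factoring out $h$, each pairwise intersection of translated subpieces becomes
\[ g_{U_1}(\R_\zeta(b_1))\cap\big(\bfxi+g_{U_2}(\R_\zeta(b_2))\big)=\pi\circ\bfell(U_1)+h\big(\R_\zeta(b_1)\cap(\bfxi''+\R_\zeta(b_2))\big), \]
where $h(\bfxi'')=\bfxi+\pi\circ\bfell(U_2)-\pi\circ\bfell(U_1)$. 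When $[b_1,\bfxi'',b_2]$ fails the antisymmetry built into $\mathcal{D}$, one normalises it to the admissible representative $[b_2,-\bfxi'',b_1]$ using $\R_\zeta(b_1)\cap(\bfxi''+\R_\zeta(b_2))=\bfxi''+\big(\R_\zeta(b_2)\cap(-\bfxi''+\R_\zeta(b_1))\big)$; keeping track of the translation vectors through the inner product $\spk{\cdot,\bm{u}}$ shows that the successor triples and labels produced this way are exactly those prescribed by the two bullet cases and the two label alternatives of Definition~\ref{def:srsG}(2). Hence $Y_{[a_1,\bfxi,a_2]}=\bigcup_{[a_1,\bfxi,a_2]\xrightarrow{s}v'}\big(s+h(Y_{v'})\big)$, the union ranging over all triples $v'$ joined to $[a_1,\bfxi,a_2]$ by that edge relation.

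Next I would check the three defining properties of $\Gamma_\zeta^{({\rm sr})}$ for the triples in $\mathcal{B}$. Iterating the Rauzy set equation gives $\R_\zeta(a)\subseteq\overline{B}(\bm{0},\rho)$ with $\rho=\max_U\nm{\pi\circ\bfell(U)}/(1-\abs{\lambda_1})$, so $Y_{[a_1,\bfxi,a_2]}\ne\emptyset$ forces $\nm{\bfxi}\le 2\rho$, which is precisely Definition~\ref{def:srsG}(1). Membership in $\Xi_{\rm sr}$ propagates combinatorially along edges: for a successor $[b_1,\bfxi'',b_2]$ as above one has $\bfx'':=\bfM_\zeta^{-1}(\bfx+\bfell(U_2)-\bfell(U_1))\in\ZZ^{m+1}$ with $\pi(\bfx'')=\bfxi''$, and combining $\bfM_\zeta^{T}\bm{u}=\lambda_0\bm{u}$ with the inequality $\spk{\bfell(U),\bm{u}}+\spk{\bfell(a),\bm{u}}\le\lambda_0\spk{\bfell(b),\bm{u}}$ coming from $\zeta(b)=UaV$, one verifies that the normalised successor again has its second-coordinate pair in $\Xi_{\rm sr}$. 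From any non-empty $Y_v$ one may therefore iterate the set equation to build an infinite outgoing walk inside $\mathcal{B}$, so $\mathcal{B}$ satisfies conditions~(1)--(3); since $\Gamma_\zeta^{({\rm sr})}$ is by definition the largest graph with these properties, $\mathcal{B}\subseteq V(\Gamma_\zeta^{({\rm sr})})$.

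It remains to prove the reverse inclusion and to identify the sets. Define $Z_v:=Y_v$ for $v\in\mathcal{B}$ and $Z_v:=\emptyset$ for $v\in V(\Gamma_\zeta^{({\rm sr})})\setminus\mathcal{B}$. By the set equation, every successor of a $\mathcal{B}$-vertex carrying a non-empty intersection is again in $\mathcal{B}$ (propagation of $\Xi_{\rm sr}$), whereas any $v\in V(\Gamma_\zeta^{({\rm sr})})\setminus\mathcal{B}$ must---by condition~(3) and the same propagation along a walk from a start vertex---satisfy $(\bfxi,a_2)\in\Xi_{\rm sr}$, hence $Y_v=\emptyset$, and then all successors of $v$ carry empty intersections; consequently $\{Z_v\}$ satisfies the GIFS equations of $(\Gamma_\zeta^{({\rm sr})},g')$, so uniqueness (Proposition~\ref{GIFSpropo}) gives $X_v=Z_v$. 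Finally, condition~(3) forces every vertex of $\Gamma_\zeta^{({\rm sr})}$ to have an outgoing edge, so a standard contraction-mapping argument on the space of lists of non-empty compact sets shows that every $X_v$ is non-empty; hence $Z_v\ne\emptyset$, i.e. $v\in\mathcal{B}$, for every vertex, and $X_v=Y_v$ throughout.

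The step I expect to be the main obstacle is the first one: matching the somewhat opaque edge-and-label recipe of Definition~\ref{def:srsG}(2)---with its swapped second case and its two-branch label---to the natural decomposition of $\R_\zeta(a_1)\cap(\bfxi+\R_\zeta(a_2))$, and in particular making the normalisation $[a_1,\bfxi,a_2]\leftrightarrow[a_2,-\bfxi,a_1]$ interact consistently both with the condition $(\bfxi,a_2)\in\Xi_{\rm sr}$ and with the choice of label. The inner-product bookkeeping against the left Perron eigenvector $\bm{u}$, which rests on the unimodularity of $\zeta$, is the technical engine behind the propagation and behind the whole matching.
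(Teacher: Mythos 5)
First, a point of reference: the paper does not prove this lemma at all --- it is quoted from Siegel--Thuswaldner (Theorem~5.7), so you are in effect re-proving the cited result. Your first two paragraphs (the decomposition of $\R_\zeta(a_1)\cap(\bfxi+\R_\zeta(a_2))$ via the prefix-graph set equation, the normalisation $[b_1,\bfxi'',b_2]\leftrightarrow[b_2,-\bfxi'',b_1]$, the norm bound giving Definition~\ref{def:srsG}(1), and the forward propagation of membership in $\Xi_{\rm sr}$ using $\bm{u}\bfM_\zeta=\lambda_0\bm{u}$, unimodularity, and $\spk{\bfell(U),\bm{u}}+\spk{\bfell(a),\bm{u}}\le\lambda_0\spk{\bfell(b),\bm{u}}$) are essentially the right ingredients and do yield $\mathcal{B}\subseteq V(\Gamma_\zeta^{({\rm sr})})$ by maximality, modulo the label bookkeeping you defer (and modulo reading the relation in Definition~\ref{def:srsG}(2) as $h(\bfxi')=\bfxi+\pi\circ\bfell(U_2)-\pi\circ\bfell(U_1)$, with the \emph{source} translation on the right, which is clearly the intended meaning).

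The genuine gap is in your last paragraph, i.e.\ in the reverse inclusion $V(\Gamma_\zeta^{({\rm sr})})\subseteq\mathcal{B}$ and the identification $X_v=Y_v$. You define $Z_v:=Y_v$ on $\mathcal{B}$ and $Z_v:=\emptyset$ elsewhere, check that $\{Z_v\}$ satisfies the GIFS equations, and then invoke the uniqueness of Proposition~\ref{GIFSpropo} to get $X_v=Z_v$, finally contradicting the non-emptiness of $X_v$. But the uniqueness in Proposition~\ref{GIFSpropo} (Bandt, Mauldin--Williams) is uniqueness among lists of \emph{non-empty} compact sets; among arbitrary compact lists it fails, because any list that vanishes on a forward-closed set of vertices (in particular the all-empty list, since every vertex of $\Gamma_\zeta^{({\rm sr})}$ has an outgoing edge) also satisfies the same system of equations. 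Your $Z$-list is exactly of this possibly-degenerate type, so you may not conclude $X_v=Z_v$, and the contradiction you aim for is circular: it presupposes what has to be shown, namely that every vertex of the maximal graph carries a non-empty intersection. The missing idea is a direct argument for this: given $v=[a_1,\bfxi,a_2]\in V(\Gamma_\zeta^{({\rm sr})})$, condition~(3) provides an infinite walk $v\xrightarrow{s_1}v_1\xrightarrow{s_2}v_2\cdots$, and the point $\bfz=\sum_{k\ge1}h^{k-1}(s_k)$ (convergent since $h$ is a contraction and the labels are bounded by condition~(1)) must be shown to lie in $\R_\zeta(a_1)\cap(\bfxi+\R_\zeta(a_2))$, by projecting the walk onto two infinite walks in $\Gamma_\zeta$ (keeping track of the swapped/unswapped cases) and using the representation of points of $\R_\zeta(a)$ as $\sum_{k\ge1}h^{k-1}(\pi\circ\bfell(U_k))$ along infinite walks of the prefix graph. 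This walk-to-point construction is precisely the core of the cited Siegel--Thuswaldner proof; without it your argument establishes only $\mathcal{B}\subseteq V(\Gamma_\zeta^{({\rm sr})})$ and $X_v\supseteq$ the corresponding pieces, not the full equivalence nor $X_{[a_1,\bfxi,a_2]}=\R_\zeta(a_1)\cap(\bfxi+\R_\zeta(a_2))$.
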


The next result is our main tool in context with fractal dimensions. We first consider the box counting dimension $\dim_B$ as an upper bound for the Hausdorff dimension. The proposition is based on
\cite[Theorem 4.4]{Siegel-Thuswaldner.2009} but we do not require that the self-replicating boundary graph is strongly connected. In return, we get a slightly weaker statement.
\begin{proposition}\label{Bdim}
Let $\zeta$  be a primitive unimodular Pisot substitution over the alphabet $\A$ and suppose that
$\mu_{\rm sr}<  \lambda$. Then there exists an $a \in \A$ such that
\[\dim_B(\partial \R_\zeta(a)) = d  + \frac{ \log(\lambda) - \log(\mu_{\rm sr})}{\min_{k \in \{1, \ldots, r+s\}}\log(\abs{\lambda^{(k)}})}.\]
\end{proposition}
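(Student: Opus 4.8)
The plan is to reduce the statement to \cite[Theorem 4.4]{Siegel-Thuswaldner.2009} applied not to $\Gamma_\zeta^{({\rm sr})}$ itself but to a suitable strongly connected component of it. The difficulty the proposition addresses is precisely that without strong connectivity one cannot directly invoke that theorem to obtain a single dimension value for the whole boundary; instead one gets the dimension of the invariant set attached to a well-chosen strongly connected piece, and then has to argue that this piece actually describes $\partial\R_\zeta(a)$ for some letter $a$.

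First I would recall that, by Lemma~\ref{limm1}, the invariant sets $X_{[a_1,\bfxi,a_2]}$ of the GIFS $(\Gamma_\zeta^{({\rm sr})},g')$ are exactly the nonempty intersections $\R_\zeta(a_1)\cap(\bfxi+\R_\zeta(a_2))$ occurring in the self-replicating tiling, and that since $\mu_{\rm sr}<\lambda_0$ this tiling is proper by Proposition~\ref{tilingproperty}. Consequently the boundary $\partial\R_\zeta(a)$ is the union, over all vertices $[a,\bfxi,a_2]$ (and symmetrically $[a_1,\bfxi,a]$) of $\Gamma_\zeta^{({\rm sr})}$, of the corresponding $X_v$; this is the standard description of the boundary of a tile in a proper tiling in terms of neighbour intersections. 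Next I would pass to the graph-theoretic structure: decompose $\Gamma_\zeta^{({\rm sr})}$ into strongly connected components and let $C$ be one whose incidence matrix attains the dominant eigenvalue $\mu_{\rm sr}$ (such a component exists because the dominant eigenvalue of the incidence matrix of a finite graph is the maximum of the dominant eigenvalues over its strongly connected components). By condition~(3) in Definition~\ref{def:srsG} every vertex lies on an infinite walk emanating from a vertex that starts the self-replicating tiling, and by primitivity one can moreover arrange (using the prefix-graph $\Gamma_\zeta$, which is strongly connected) that every letter appears as a coordinate of some vertex; in particular some vertex $v^\ast=[a,\bfxi,a_2]$ of $C$ has its first coordinate equal to a letter $a$.

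Then I would apply the dimension computation of \cite[Theorem 4.4]{Siegel-Thuswaldner.2009}, which is valid for the strongly connected GIFS obtained by restricting $(\Gamma_\zeta^{({\rm sr})},g')$ to $C$: since the contraction ratios of the maps $g'_{\bfxi}$ are all equal to $\abs{\lambda_d}$ on the $d$-dimensional contractive space (here one uses that $h$ acts on $\mathbb{K}_c$ with the eigenvalue of smallest modulus $\abs{\lambda_d}$ governing the box-counting scaling, exactly as in the proof of that theorem), the invariant set $X_{v^\ast}$ attached to $v^\ast$ has box-counting dimension
\[
\dim_B(X_{v^\ast}) = d + \frac{\log(\lambda_0)-\log(\mu_{\rm sr})}{\log(\abs{\lambda_d})},
\]
where the exponent is the unique $s$ for which $\mu_{\rm sr}\,\abs{\lambda_d}^{\,s-d}=\lambda_0$, i.e. for which the transition matrix of $C$ scaled by the contraction ratio has spectral radius matching the measure-scaling factor $\lambda_0$. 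Finally, since $X_{v^\ast}=\R_\zeta(a)\cap(\bfxi+\R_\zeta(a_2))\subset\partial\R_\zeta(a)$ and, conversely, $\partial\R_\zeta(a)$ is a finite union of sets $X_w$ with $w$ ranging over vertices of $\Gamma_\zeta^{({\rm sr})}$, each of box dimension at most $d+\frac{\log\lambda_0-\log\mu_{\rm sr}}{\log\abs{\lambda_d}}$ (because every such $w$ lies in a strongly connected component whose dominant eigenvalue is at most $\mu_{\rm sr}$, and the generalized Moran-type estimate gives the upper bound even for components that are not strongly connected by the argument already present in \cite{Siegel-Thuswaldner.2009}), we get equality for this particular $a$, which is the assertion.

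The main obstacle I expect is the last step: controlling the box dimension of $X_w$ for vertices $w$ outside a maximal component, i.e. showing that feeding a lower-spectral-radius component into a higher one (through the directed acyclic condensation of $\Gamma_\zeta^{({\rm sr})}$) does not raise the dimension above the bound coming from $\mu_{\rm sr}$. This is where the hypothesis ``we do not require the self-replicating boundary graph to be strongly connected, in return we get a slightly weaker statement'' bites: one only obtains the dimension of $\partial\R_\zeta(a)$ for \emph{some} $a$ (namely one arising from a dominant component), not for all letters simultaneously, and one must be careful that the union defining $\partial\R_\zeta(a)$ is finite so that $\dim_B$ of the union is the maximum of the $\dim_B$ of the pieces. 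Handling this cleanly amounts to a standard but slightly delicate covering argument on the condensation DAG, using that along any path the product of contraction ratios decays geometrically while the number of vertices grows only at the rate of the largest component's spectral radius along that path.
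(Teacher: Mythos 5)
Your proposal follows essentially the same route as the paper: decompose $\Gamma_\zeta^{({\rm sr})}$ into strongly connected components, obtain $\dim_B(X_v)$ from the Mauldin--Williams/Siegel--Thuswaldner machinery in terms of the largest $\mu_C$ reachable from $v$ (your lower bound via a dominant component plus upper bound via the condensation DAG is exactly what \cite[Theorem~5]{Mauldin-Williams:88} packages into one formula), and then use Lemma~\ref{limm1} to write $\partial\R_\zeta(a)$ as a finite union of the $X_v$ and take the maximum. The argument is correct; only the appeal to primitivity to find a vertex of the dominant component with a prescribed first coordinate is superfluous, since any vertex of that component already supplies the required letter $a$.
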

\begin{proof}
Let $\mathcal{C}$ be the set of strongly connected components of $\Gamma_\zeta^{({\rm sr})}$. For each $C \in \mathcal{C}$ we
denote by $\mu_C$ the dominant eigenvalue of the adjacency matrix of $C$. Clearly, $\mu_{\rm sr}=\max_{C \in \mathcal{C}}\mu_C$.

For vertices $v, v' \in V(\Gamma_\zeta^{({\rm sr})})$
let ${\rm dist}(v,v')$ be the graph distance of $v$ and $v'$, i.e. the length of the shortest path from $v$ to $v'$. If there is no such path then we set ${\rm dist}(v,v'):=\infty$. For a strongly connected component $C \in \mathcal{C}$ we define
${\rm dist}(v,C)=\min\{{\rm dist}(v,v'): v' \text{ is a vertex of } C\}$. 
By observing 
\cite[Theorem 5]{Mauldin-Williams:88} and 
the argumentation of the proof of \cite[Theorem 4.4]{Siegel-Thuswaldner.2009} we see that for each
$v \in V(\Gamma_\zeta^{({\rm sr})})$ we have
\[\dim_B(X_v) = d  + \frac{\log(\lambda) - \max\{\log(\mu_C) : C \in \mathcal{C}, {\rm dist}(v,C)<\infty\}}{\min_{k \in \{1, \ldots, r+s\}}\log(\abs{\lambda^{(k)}})}.\]

Finally, observe that Lemma~\ref{limm1} implies that for each $a \in \A$ we have
\[
\partial{\R_\zeta(a)}=  \bigcup_{\substack{[a_1, \bfx, a_2] \in V(\Gamma_\zeta^{({\rm sr})})\\ a_1=a \vee
 (\bfx=\bm{0} \wedge a_2=a)}} X_{[a_1, \bfx, a_2]},
\]
hence, 
\[
\dim_B(\partial \R_\zeta(a))=  \max\{\dim_B(X_{[a_1, \bfx, a_2]}): [a_1, \bfx, a_2] \in V(\Gamma_\zeta^{({\rm sr})}), a_1=a \vee
 (\bfx=\bm{0} \wedge a_2=a)\}.
\]
From this the statement of the proposition follows immediately.
\end{proof}

Concerning the Hausdorff dimension we obtain the following corollary.
\begin{corollary}\label{Hdim}
Let $\zeta$  be a primitive unimodular Pisot substitution over the alphabet $\A$ and suppose that
$\mu_{\rm sr}< \lambda_0$. Then there exists an $a \in \A$ such that
\[\dim_H(\partial \R_\zeta(a)) \leq d  + \frac{\log(\lambda) - \log(\mu_{\rm sr})}{\min_{k \in \{1, \ldots, r+s\}}\log(\abs{\lambda^{(k)}})}.\]
If $\abs{\lambda^{(1)}}=\abs{\lambda^{(2)}}= \cdots = \abs{\lambda^{(r+s)}}$ then equality holds.
\end{corollary}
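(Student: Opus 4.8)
The plan is to derive Corollary~\ref{Hdim} directly from Proposition~\ref{Bdim} by combining the standard inequality $\dim_H \leq \dim_B$ with a matching lower bound coming from a self-affine structure of the boundary pieces. First I would recall that for any totally bounded set $Y \subset \RR^d$ one has $\dim_H(Y) \leq \dim_B(Y)$; applying this to $Y = \partial\R_\zeta(a)$ for the letter $a \in \A$ provided by Proposition~\ref{Bdim} gives immediately
\[
\dim_H(\partial \R_\zeta(a)) \leq \dim_B(\partial \R_\zeta(a)) = d + \frac{\log(\lambda_0) - \log(\mu_{\rm sr})}{\log(\abs{\lambda_d})},
\]
which is the first assertion. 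So the content of the corollary is entirely in the equality statement under the hypothesis $\abs{\lambda_1} = \abs{\lambda_d}$.

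For the lower bound I would argue as follows. By Lemma~\ref{limm1} the boundary $\partial\R_\zeta(a)$ is a finite union of the invariant sets $X_v$ of the GIFS $(\Gamma_\zeta^{({\rm sr})}, g')$, whose maps $g'_{\bfxi} = \bfxi + h(\cdot)$ are affine with linear part $h$. When $\abs{\lambda_1} = \abs{\lambda_d}$, the restriction $h$ of $\bfM_\zeta$ to $\mathbb{K}_c$ has all its eigenvalues of the same modulus, hence $h$ is (conjugate to) a similarity of ratio $\abs{\lambda_d}$ on $\mathbb{K}_c \cong \RR^d$; equivalently, after a linear change of coordinates the GIFS $(\Gamma_\zeta^{({\rm sr})}, g')$ consists of similarities with common contraction ratio $\abs{\lambda_d}$. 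For such a GIFS the results of Mauldin--Williams \cite{Mauldin-Williams:88} (already invoked in the proof of Proposition~\ref{Bdim}) give that, provided an open set condition holds, $\dim_H(X_v) = \dim_B(X_v)$ for every vertex $v$ lying in, or connected to, a strongly connected component realising the maximal eigenvalue. Taking $v = [a_1,\bfxi,a_2]$ with $a_1 = a$ (or $\bfxi = \bm 0$, $a_2 = a$) realising $\mu_{\rm sr}$ — exactly the vertex used in the proof of Proposition~\ref{Bdim} — and using the decomposition $\partial\R_\zeta(a) = \bigcup X_{[a_1,\bfxi,a_2]}$ from that proof together with finite stability of Hausdorff dimension, we get $\dim_H(\partial\R_\zeta(a)) \geq \dim_H(X_v) = \dim_B(X_v) = d + (\log\lambda_0 - \log\mu_{\rm sr})/\log(\abs{\lambda_d})$, matching the upper bound.

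The main obstacle is the open set condition needed to apply the Mauldin--Williams dimension formula to the GIFS $(\Gamma_\zeta^{({\rm sr})}, g')$. The natural candidate for the open sets is the family $\{\interior(\R_\zeta(a))\}_{a\in\A}$ together with the cylinder structure of the self-replicating tiling: since by Proposition~\ref{SRtiling} the translates $\bfxi + \R_\zeta(a)$ form a tiling of $\mathbb{K}_c$ and (under the standing coincidence hypotheses) overlap only on a set of measure zero, the images under distinct edges of $\Gamma_\zeta^{({\rm sr})}$ have disjoint interiors — this is precisely the separation property exploited in \cite[Section~5]{Siegel-Thuswaldner.2009}. I would point to the argument in the proof of \cite[Theorem~5.7]{Siegel-Thuswaldner.2009} (our Lemma~\ref{limm1}), where the GIFS identity $X_v = \bigcup g'_{s}(X_{v'})$ is established with a measure-disjointness of the pieces, and note that this measure-disjointness is exactly what promotes the box-dimension formula of Proposition~\ref{Bdim} to a Hausdorff-dimension equality once the maps are similarities. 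Beyond this, the only subtlety is bookkeeping: one must make sure the vertex $v$ achieving $\mu_{\rm sr}$ can be chosen so that its label $a_1$ (or $a_2$, in the $\bfxi = \bm 0$ case) is the same letter $a$ that appears in the upper-bound statement — but this is automatic since both statements select $a$ via the same vertex in the same graph.
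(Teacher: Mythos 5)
Your proof of the inequality is correct and is exactly what the paper intends: the corollary is stated without proof, and the first assertion is nothing more than $\dim_H\leq\dim_B$ applied to the letter $a$ furnished by Proposition~\ref{Bdim}. Your overall plan for the equality case is also the paper's implicit route: when $\abs{\lambda_1}=\abs{\lambda_d}$ the distinct eigenvalues $\lambda_1,\dots,\lambda_d$ make $h$ diagonalisable and hence, in suitable coordinates, a similarity of ratio $\abs{\lambda_d}$, after which the Mauldin--Williams dimension formula for the boundary GIFS gives $\dim_H=\dim_B$; this is precisely the second half of \cite[Theorem~4.4]{Siegel-Thuswaldner.2009}, on which the paper leans.

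The gap is in your verification of the separation hypothesis needed for the lower bound. Every vertex set $X_{[a_1,\bfxi,a_2]}=\R_\zeta(a_1)\cap(\bfxi+\R_\zeta(a_2))$ is contained in the boundary of a tile of a proper tiling and therefore has $d$-dimensional Lebesgue measure zero, so the ``measure-disjointness of the pieces'' you invoke is vacuous: Lebesgue-null sets are always pairwise measure-disjoint, and this carries no information about overlaps of a similarity GIFS acting on them, hence cannot promote the box-dimension computation of Proposition~\ref{Bdim} to a Hausdorff lower bound. Your candidate open sets $\interior(\R_\zeta(a))$ do not repair this: two distinct edges of $\Gamma_\zeta^{({\rm sr})}$ leaving a vertex $[a_1,\bfxi,a_2]$ may use the same prefix-graph edge $a_1\xrightarrow{U_1}b_1$ in the first component and differ only in the second, in which case the images of $\interior(\R_\zeta(b_1))$ under the corresponding maps $g'$ coincide rather than having disjoint interiors. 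What Mauldin--Williams actually require is the geometric non-overlapping condition for the boundary GIFS itself (equivalently, for the lower bound, positivity of the $s$-dimensional Hausdorff measure of the $X_v$); establishing this separation is the genuinely nontrivial content of \cite[Theorem~4.4]{Siegel-Thuswaldner.2009}, and the clean fix is to cite that statement for the equality case rather than to rederive the separation from the Lebesgue-disjointness of the tiling.
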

\begin{proof}
If $\abs{\lambda^{(1)}}=\abs{\lambda^{(2)}}= \cdots = \abs{\lambda^{(r+s)}}$ then $g'_{\bfy}$ is a similarity map for each $\bfy \in \KK_\zeta$ and we can apply \cite[Theorem 5]{Mauldin-Williams:88} directly.
\end{proof}

\end{section}

\begin{section}{Relations between tent-tiles and Rauzy fractals}\label{sec:iwip}

In the present section we show the exact relation between tent-tiles and Rauzy fractals induced by four classes of substitutions. 


For an integer $p \geq 3$ let   $\zeta_p$ denote the substitution
\begin{equation}\label{zetap}
\zeta_p: 0 \mapsto 10, 1 \mapsto 2, \ldots, (p-2) \mapsto (p-1), (p-1) \mapsto (p-1)0.
\end{equation}
over the alphabet $\A=\{0, \ldots, p-1\}$.
The prefix graph $\Gamma_{\zeta_p}$ is sketched in Figure~\ref{Pgraph}.
\begin{figure}[ht]
\begin{tikzpicture}

\node[circle,draw=blue!60, very thick, minimum size=10mm] (v0) {\tiny $0$};
\node[circle,draw=blue!60, very thick, minimum size=10mm,right of = v0, node distance=2cm] (v1)  {\tiny $1$};
\node[right of = v1, node distance=2cm] (v2)  {};
\node[right of = v2, node distance=1cm] (vp-2)  {};
\node[circle,draw=blue!60, very thick, minimum size=10mm,right of = vp-2, node distance=2cm] (vp-1)  {\tiny $p-1$};

every edge quotes/.style = {font=\footnotesize, shorten >=1pt},

\draw[->,black, very thick] (v1) edge  node[pos=0.5, below=-2pt] {$\varepsilon$} (v0);
\draw[->,black, very thick, shorten <=0.3cm] (v2) edge  node[pos=0.65, below=-2pt] {$\varepsilon$} (v1);
\draw[-,black, very thick, dotted] (vp-2) edge (v2);
\draw[->,black, very thick, shorten >=0.3cm] (vp-1) edge  node[pos=0.35, below=-2pt] {$\varepsilon$} (vp-2);

\draw[->,black, very thick,bend left=20] (v0) edge  node[pos=0.2, above=-1pt] {$(p-1)$} (vp-1);

\draw[->,black, very thick, loop left] (v0) edge  node[pos=0.5, above=1pt] {$1$} (v0);

\draw[->,black, very thick, loop right] (vp-1) edge  node[pos=0.5, above=1pt] {$\varepsilon$} (vp-1);

\end{tikzpicture}

\caption{Prefix graph of the substitution $\zeta_p$. }
\label{Pgraph}
\end{figure}
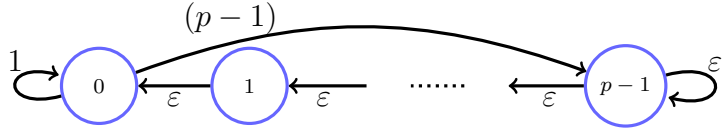

Note that the substitution $\zeta_p$ satisfies the strong coincidence condition. Indeed,  it is easy to see that
$\zeta_p^{p-1}(a)$ starts with the letter $p-1$ for each letter $a \in \A$.

\begin{theorem}\label{Rauzy}
Let $\alpha$ be a special Pisot unit such that $\alpha^p=\beta^2$ holds for an integer $p\geq 3$, i.e. $p \in \{3,4,6,8,10\}$.
Then $\zeta_p$ (as defined in \eqref{zetap}) is a primitive unimodular Pisot substitution with dominant root $\alpha$ and 
we have $\R_{\zeta_p}(k) \cong X_k$ with
\[
X_k=\begin{cases}  \fL^k \circ \fR(\Fa),  & \text{if  } k \in \{0, \ldots, p-2\}; \\
\fL^{p-1}(\Fa),& \text{if  } k =p-1.
\end{cases}\]
\end{theorem}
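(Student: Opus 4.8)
The plan is to treat the two claims separately. For the first — that $\zeta_p$ is a primitive unimodular Pisot substitution with dominant root $\alpha$ — I would write down the incidence matrix $\bfM_{\zeta_p}$ (it has $1$'s on the subdiagonal, a $1$ in position $(0,0)$, and $1$'s in positions $(0,p-1)$ and $(p-1,p-1)$) and expand, along the first row, the determinant defining its characteristic polynomial to get $\chi(t)=(t-1)^2t^{p-2}-1$. Then $\chi(\alpha)=0$ is equivalent to $\alpha^{p-2}(\alpha-1)^2=1$, i.e. to $\alpha^p=(\alpha/(\alpha-1))^2=\beta^2$, which is the hypothesis; since $\chi'(t)=(t-1)t^{p-3}(pt-(p-2))>0$ on $(1,\infty)$ and $\chi(1)=-1<0$, the number $\alpha$ is the unique root of $\chi$ above $1$, and as $\bfM_{\zeta_p}$ is primitive ($\zeta_p^{\,2(p-1)}(a)$ contains every letter for each $a\in\A$), $\alpha$ is its Perron root. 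Finally $\abs{\det\bfM_{\zeta_p}}=\abs{\chi(0)}=1$ gives unimodularity, and $\chi$ being the minimal polynomial of $\alpha$, the remaining eigenvalues of $\bfM_{\zeta_p}$ are Galois conjugates of $\alpha$, hence of modulus $<1$ since $\alpha$ is Pisot.

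For $\R_{\zeta_p}(k)=\Phi(X_k)$, the idea is to exhibit $\{X_k:k\in\A\}$ as the invariant set list of a GIFS on the graph $\Gamma_{\zeta_p}$ whose realisation becomes the Rauzy realisation $g$ after conjugation by a linear isomorphism $\Phi:\RR^d\to\mathbb{K}_c$, and then to invoke the uniqueness of invariant set lists (Proposition~\ref{GIFSpropo}). First I would derive the set-equations for the $X_k$: substituting $\Fa=\fL(\Fa)\cup\fR(\Fa)$ into their definitions and using $\bfA_\alpha\bfB_\alpha=\bfB_\alpha\bfA_\alpha=\bfA_\alpha+\bfB_\alpha$, $\bfA_\alpha^p=\bfB_\alpha^2$ (the matrix version of $\alpha^p=\beta^2$) and the set-identity $\fR(\Fa)=\bfB_\alpha\bfPsi_\alpha(1)-\bfB_\alpha(\Fa)$, one obtains
\begin{align*}
X_0 &= \bigl(\bfA_\alpha(X_0)+(\bfI_d-\bfA_\alpha)\bfB_\alpha\bfPsi_\alpha(1)\bigr)\cup\bigl(\bfA_\alpha(X_{p-1})+(\bfB_\alpha-\bfB_\alpha^2)\bfPsi_\alpha(1)\bigr),\\
X_k &= \bfA_\alpha(X_{k-1})\ \ (1\le k\le p-2),\qquad X_{p-1}=\bfA_\alpha(X_{p-2})\cup\bfA_\alpha(X_{p-1}).
\end{align*}
Reading off $\Gamma_{\zeta_p}$ (Figure~\ref{Pgraph}) the corresponding equations for the Rauzy fractal gives $\R_{\zeta_p}(0)=\bigl(\pi\bfell(1)+h(\R_{\zeta_p}(0))\bigr)\cup\bigl(\pi\bfell(p-1)+h(\R_{\zeta_p}(p-1))\bigr)$, $\R_{\zeta_p}(k)=h(\R_{\zeta_p}(k-1))$ for $1\le k\le p-2$, and $\R_{\zeta_p}(p-1)=h(\R_{\zeta_p}(p-2))\cup h(\R_{\zeta_p}(p-1))$. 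The two systems have the same underlying graph, and all their maps are affine with linear part $\bfA_\alpha$ resp. $h$, so it remains to produce $\Phi$ with $\Phi\bfA_\alpha=h\Phi$, $\Phi\bigl((\bfI_d-\bfA_\alpha)\bfB_\alpha\bfPsi_\alpha(1)\bigr)=\pi\bfell(1)$ and $\Phi\bigl((\bfB_\alpha-\bfB_\alpha^2)\bfPsi_\alpha(1)\bigr)=\pi\bfell(p-1)$.

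To build $\Phi$ I would first simplify the two translations: with $\bfA_\alpha\bfPsi_\alpha(x)=\bfPsi_\alpha(\alpha x)$, $\bfB_\alpha\bfPsi_\alpha(x)=\bfPsi_\alpha(\beta x)$ and the identities $\beta(1-\alpha)=-\alpha$, $\beta-\beta^2=-\alpha^{p-1}$ (the latter again being $\alpha^{p-2}(\alpha-1)^2=1$), they become $-\bfA_\alpha\bfPsi_\alpha(1)$ and $-\bfA_\alpha^{\,p-1}\bfPsi_\alpha(1)$; on the other side, projecting $\bfM_{\zeta_p}\bfell(k)=\bfell(\zeta_p(k))$ onto $\mathbb{K}_c$ gives $h\,\pi\bfell(k)=\pi\bfell(k+1)$ for $1\le k\le p-2$, hence $\pi\bfell(p-1)=h^{\,p-2}\pi\bfell(1)$. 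Putting $\bm{z}:=h^{-1}\pi\bfell(1)$, the three conditions on $\Phi$ collapse to $\Phi\bfA_\alpha=h\Phi$ and $\Phi\bfPsi_\alpha(1)=-\bm{z}$. I would therefore define $\Phi$ on the basis $\bfPsi_\alpha(1),\bfA_\alpha\bfPsi_\alpha(1),\dots,\bfA_\alpha^{\,d-1}\bfPsi_\alpha(1)$ of $\RR^d$ (a basis because $\bfA_\alpha$ has simple eigenvalues and $\bfPsi_\alpha(1)$, all of whose eigen-coordinates equal $1$, is $\bfA_\alpha$-cyclic) by $\Phi(\bfA_\alpha^{\,j}\bfPsi_\alpha(1)):=-h^{\,j}\bm{z}$. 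Since $\pi\bfell(1)$, and hence $\bm{z}$, is $h$-cyclic — its eigen-coordinates being proportional to the Galois conjugates of $u_1$, the entry at the letter $1$ of the positive left eigenvector $\bm u$ of $\bfM_{\zeta_p}$, a nonzero element of $\QQ(\alpha)$ — the images $-\bm{z},\dots,-h^{\,d-1}\bm{z}$ form a basis of $\mathbb{K}_c$, so $\Phi$ is an isomorphism; and $\Phi\bfA_\alpha=h\Phi$ holds on basis vectors of index $<d-1$ by construction and on the index-$(d-1)$ one because $\bfA_\alpha$ and $h$ satisfy the same monic degree-$d$ equation (their common characteristic polynomial, the dominant eigenvalue of $\bfM_{\zeta_p}$ being $\alpha$). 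With this $\Phi$ the non-empty compact sets $\Phi(X_k)$ satisfy exactly the equations of the contractive GIFS $(\Gamma_{\zeta_p},g)$, so by Proposition~\ref{GIFSpropo} they coincide with the $\R_{\zeta_p}(k)$, which is the claim. The main obstacle, as I see it, is exactly this last matching: the two translation conditions impose $2d$ scalar equations on $\Phi$, whereas the linear isomorphisms $\RR^d\to\mathbb{K}_c$ intertwining $\bfA_\alpha$ and $h$ form only a $d$-parameter family; the construction works solely because the pattern $(-\bfA_\alpha\bfPsi_\alpha(1),-\bfA_\alpha^{\,p-1}\bfPsi_\alpha(1))$ on the tent side mirrors the pattern $(\pi\bfell(1),h^{\,p-2}\pi\bfell(1))$ forced by the shift $k\mapsto k+1$ inside $\zeta_p$ — and it is precisely this coincidence, through the identity $\alpha^{p-2}(\alpha-1)^2=1$, that uses the hypothesis $\alpha^p=\beta^2$ beyond its role in identifying the dominant root.
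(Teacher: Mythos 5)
Your proposal is essentially correct and follows the same skeleton as the paper's own proof: derive the set equations satisfied by the sets $X_k$ from $\Fa=\fL(\Fa)\cup\fR(\Fa)$ together with $\bfA_\alpha^p=\bfB_\alpha^2$, match them with the GIFS equations \eqref{setequations} of $(\Gamma_{\zeta_p},g)$ via a linear isomorphism $\Phi$ intertwining $\bfA_\alpha$ with $h$, and conclude by the uniqueness of the invariant set list (Proposition~\ref{GIFSpropo}). Where you genuinely differ is the construction of $\Phi$: the paper takes the left eigenvector $\bm{u}=(\beta,\alpha,\dots,\alpha^{p-1})$ of $\bfM_{\zeta_p}$, forms $\bfP=\left(-\bfPsi_\alpha(\beta)\ -\bfPsi_\alpha(\alpha)\ \cdots\ -\bfPsi_\alpha(\alpha^{p-1})\right)$, which satisfies $\bfP\,\bfM_{\zeta_p}=\bfA_\alpha\,\bfP$, and defines $\Phi$ by $\pi(\bfy)=\Phi(\bfP\,\bfy)$, so the intertwining and the correct images of all prefix vectors $\bfell(U)$ come in one stroke; you instead isolate the two translation constraints, reduce them (via $\beta(1-\alpha)=-\alpha$, $\beta-\beta^2=-\alpha^{p-1}$ and $\pi\bfell(p-1)=h^{p-2}\pi\bfell(1)$) to the single normalisation $\Phi(\bfPsi_\alpha(1))=-h^{-1}\pi\bfell(1)$, and build $\Phi$ on the cyclic basis $\bfA_\alpha^{j}\bfPsi_\alpha(1)$. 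Your route costs the extra checks that $\bfPsi_\alpha(1)$ is $\bfA_\alpha$-cyclic and that $\pi\bfell(1)$ is $h$-cyclic (the latter via the nonvanishing letter-$1$ entries of the conjugated left eigenvectors, and it tacitly needs the conjugates $\lambda_i$ to be simple roots of the characteristic polynomial, which your formula for $\chi'$ does give), but it makes transparent why one $\Phi$ can meet both translation conditions, which is exactly where $\alpha^{p-2}(\alpha-1)^2=1$ enters; both constructions feed into the identical uniqueness argument. One statement you must correct: $(t-1)^2t^{p-2}-1$ is the characteristic polynomial of $\bfM_{\zeta_p}$ but in general not the minimal polynomial of $\alpha$. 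Except for $p=3$ the substitution $\zeta_p$ is reducible, e.g. $t^4-2t^3+t^2-1=(t^2-t-1)(t^2-t+1)$ for $p=4$, with the cofactor's roots on the unit circle, and for $p=6$ the cofactor $t^3-t^2+1$ even has a complex pair of roots of modulus larger than one; so the remaining eigenvalues of $\bfM_{\zeta_p}$ are neither Galois conjugates of $\alpha$ nor necessarily of modulus less than one. This does not damage your argument, because the paper's notion of Pisot substitution only refers to the Galois conjugates of the dominant eigenvalue, and everything later in your proof uses only $h$ on $\mathbb{K}_c$, whose characteristic polynomial is $\prod_{i=1}^{d}(t-\lambda_i)$ regardless of reducibility; but as written the remark is false for four of the five special Pisot units covered by the theorem.
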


\begin{proof}
The substitution $\zeta_p$ is obviously primitive. The incidence matrix of $\zeta_p$ is given by
\[\bfM_{\zeta_p} = \left(\begin{array}{ccccc} 1 & 0 & \cdots & 0 & 1 \\1 & 0 & \cdots & 0 & 0 \\ 0 & 1 & \ddots & \vdots & \vdots \\
\vdots & \ddots & \ddots & 0 & 0 \\ 0 & \cdots & 0 & 1 & 1
\end{array}\right).\]
One easily verifies that the characteristic polynomial is given by $t^p-2t^{p-1}+t^{p-2}-1$ and that the dominant root of this polynomial is $\alpha$. Therefore, the substitution is a unimodular Pisot substitution and induces a Rauzy fractal. 

As $\alpha$ is the dominant root of $M_{\zeta_p}$ we see that $\KK_\alpha \cong \KK_{\zeta_p}$. The strategy of the proof is the following: for a specific linear bijection $\Phi: \KK_\alpha \longmapsto \KK_{\zeta_p}$ we show that the set list $\{\Phi(X_0),\ldots,\Phi(X_{p-1})\}$
satisfies the system of set equations \eqref{setequations} induced by the GIFS $(\Gamma_{\zeta_p},g)$. Then the assertion of the theorem follows directly from the uniqueness if the invariant set list in Proposition~\ref{GIFSpropo}.
Observe that all computations are actually number field calculations. 

Let $\bfnu:=(\beta ,\alpha, \alpha^2, \ldots, \alpha^{p-1})$. We claim that $\bfnu$ is a left eigenvector of $\bfM_{\zeta_p}$ with respect to $\alpha$.
Indeed, $\bfnu \, \bfM_{\zeta_p} = (\beta +\alpha, \alpha^2, \alpha^3, \ldots, \alpha^{p-1}, \beta +\alpha^{p-1})$ and by the definition of $\beta=\beta(\alpha)$ we have $\beta +\alpha = \alpha\beta$ and
$\beta+ \alpha^{p-1} = \alpha^p\beta^{-1} + \alpha^p\alpha^{-1} = \alpha^p$, hence $\bfnu \, \bfM_{\zeta_p} = \alpha \,\bfnu$.

We define $\Phi:\KK_\alpha \longmapsto \KK_{\zeta_p}$ to be the uniquely determined isomorphism that satisfies $\pi(x) = -\Phi\circ\bfPsi{x}$ for all $x \in \QQ(\alpha)$.
Then clearly $\Phi$ commutes $h$ and $\bfPhi{\alpha}$, that is
\[ h  \circ \Phi = \Phi\circ\bfPhi{\alpha}.\]

For each prefix $U \in \A^*$ we have
\[g_U: \KK_{\zeta_p} \longrightarrow \KK_{\zeta_p}, \bfx \longmapsto \pi\left(\spk{\bfnu, \bfell(U)}\right) + h(\bfx),\]
Therefore, for the appearing pairs of labels we easily obtain 
\[
\renewcommand{\arraystretch}{1.3}
\begin{array}{rll}
g_\varepsilon \circ \Phi(\bfx) &=  h\circ \Phi(\bfx) = \Phi\circ \bfPhi{\alpha}(\bfx) &= \Phi\circ \fL(\bfx),\\
g_1\circ \Phi(\bfx) & =  \pi\left(\spk{\bfnu, \bfell(1)}\right) + \Phi\circ\bfPhi{\alpha}(\bfx)  \\
& =  -\Phi \circ\bfPsi{\alpha} + \Phi\circ\bfPhi{\beta \beta^{-1}\alpha}(\bfx) \\
 & =\Phi\big(\bfPsi{\beta - \alpha\beta} +\bfPhi{\beta \alpha\beta^{-1}}(\bfx)\big) & = \Phi\circ \fR \circ \fL \circ \fR^{-1}  (\bfx),\\
g_{p-1}\circ \Phi(\bfx) & = \Phi\big(-\bfPsi{\alpha^{1-p}} + \bfPhi{\alpha}(\bfx)\big) \\
& = \Phi\big(-\bfPsi{\beta^2(1-\beta^{-1}}) + \bfPhi{\beta^2\alpha^{1-p}}(\bfx)\big) \\
&=  \Phi\big(\bfPsi{\beta - \beta^2} + \bfPhi{\beta^2\alpha^{1-p}}(\bfx)\big)
&= \Phi\circ \fR^2 \circ \fL^{1-p}(\bfx).
\end{array}\]

We are now in the position to verify that the system of set equations is satisfied.
Observe that $\fL(\Fa) \cup \fR(\Fa) = \Fa$.
For each $k \in \{1, \ldots, p-2\}$ we immediately see that $\Phi(X_k)=\Phi\circ \fL^{k} \circ \fR(\Fa) = 
g_\varepsilon \circ \Phi\circ\fL^{k-1} \circ \fR(\Fa) = g_\varepsilon \circ \Phi(X_{k-1})$.
Furthermore, we have
\begin{align*}
\Phi(X_0)=\Phi\circ \fR(\Fa) = & \Phi\circ \fR \circ \fL(\Fa) \cup  \Phi\circ \fR^2 (\Fa) \\
= &  \Phi\circ \fR \circ \fL \circ \fR^{-1} \circ \fR (\Fa) \cup  \Phi\circ \fR^2 \circ \fL^{1-p} \circ \fL^{p-1}(\Fa) \\ 
= & g_1\circ \Phi(X_0) \cup g_{p-1} \circ \Phi(X_{p-1}).
\end{align*}
Finally, $ \Phi(X_{p-1})=  \Phi\circ \fL^{p-1}(\Fa) =  \Phi\circ \fL^{p-1} \circ \fL (\Fa) \cup  \Phi\circ \fL^{p-1} \circ \fR(\Fa) = g_\varepsilon \circ\Phi (X_{p-1}) \cup g_\varepsilon\circ\Phi(X_{p-2})$.
\end{proof}

Observe that this result asserts that five of the tent-tiles are Rauzy fractals.

\begin{corollary}
The tent-tiles $\F_{\alpha_1}$, $\F_{\alpha_2}$, $\F_{\alpha_3}$, $\F_{\alpha_4}$ and $\F_{\alpha_5}$ are given - up to a linear isomorphism -  by the  Rauzy fractals induced by $\zeta_3$, $\zeta_4$, $\zeta_6$,
$\zeta_8$ and $\zeta_{10}$, respectively.
\end{corollary}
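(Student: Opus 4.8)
The plan is to read off the corollary directly from Theorem~\ref{Rauzy}, combining it with the self-similarity of $\Fa$ and the arithmetic of Table~\ref{dependencies}; no new ideas are required.

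First I would show that $\Fa$ is the union of the sets $X_0,\ldots,X_{p-1}$ appearing in Theorem~\ref{Rauzy}. Starting from the defining identity $\Fa = \fL(\Fa)\cup\fR(\Fa)$ and repeatedly unfolding the left branch, an easy induction on $n$ gives
\[
\Fa = \fL^n(\Fa) \cup \bigcup_{k=0}^{n-1} \fL^k\circ\fR(\Fa) \qquad (n\geq 1),
\]
the inductive step amounting to applying $\fL^n$ to $\Fa=\fL(\Fa)\cup\fR(\Fa)$. Taking $n=p-1$ and comparing with the definition of $X_k$ in Theorem~\ref{Rauzy} yields $\Fa=\bigcup_{k=0}^{p-1} X_k$. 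Since $\Phi$ is linear it commutes with finite unions, so together with $\R_{\zeta_p}=\bigcup_{k=0}^{p-1}\R_{\zeta_p}(k)$ (cf.~\eqref{Rauzyunion}) and $\R_{\zeta_p}(k)=\Phi(X_k)$ from Theorem~\ref{Rauzy} we obtain $\R_{\zeta_p}=\Phi(\Fa)$, i.e.\ $\Fa$ and $\R_{\zeta_p}$ coincide up to the linear isomorphism $\Phi$.

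Second I would identify, for each $i\in\{1,\ldots,5\}$, the integer $p$ for which the hypothesis $\alpha_i^p=\beta(\alpha_i)^2$ of Theorem~\ref{Rauzy} holds. By Table~\ref{dependencies} we have $\alpha_i^{m_i}=\alpha_{-i}^{m_{-i}}=\beta(\alpha_i)^{m_{-i}}$, with $m_{-1}=2$ and $m_{-i}=1$ for $i\in\{2,3,4,5\}$. For $i=1$ this is already $\alpha_1^{3}=\beta(\alpha_1)^2$, so $p=3$; for $i\in\{2,3,4,5\}$ one squares $\alpha_i^{m_i}=\beta(\alpha_i)$ to get $\alpha_i^{2m_i}=\beta(\alpha_i)^2$, hence $p=2m_i\in\{4,6,8,10\}$, respectively. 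All these values satisfy $p\geq 3$, so Theorem~\ref{Rauzy} applies and produces the desired isomorphisms with $\R_{\zeta_3},\R_{\zeta_4},\R_{\zeta_6},\R_{\zeta_8},\R_{\zeta_{10}}$.

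There is essentially no obstacle here: the statement is pure bookkeeping on top of Theorem~\ref{Rauzy}. The only points worth double-checking are that the exponents produced by squaring the dependency relations are all $\geq 3$ (so that $\zeta_p$ is defined) and that $\alpha$ is indeed the dominant root of $\zeta_p$ — the latter already being part of the conclusion of Theorem~\ref{Rauzy}, hence requiring nothing further.
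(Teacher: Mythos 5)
Your proposal is correct and follows exactly the paper's route: the paper likewise invokes Theorem~\ref{Rauzy} via the dependencies of Table~\ref{dependencies} and notes that $\Fa = X_0 \cup \cdots \cup X_{p-1}$ is easy to see, which is precisely the identity you establish by unfolding $\Fa = \fL(\Fa)\cup\fR(\Fa)$. Your write-up merely supplies the short induction and the explicit derivation of $p\in\{3,4,6,8,10\}$ that the paper leaves implicit, and both are carried out correctly.
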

\begin{proof}
Table~\ref{dependencies} shows that the special Pisot units $\alpha_1$, $\alpha_2$, $\alpha_3$, $\alpha_4$ and $\alpha_5$ satisfy the conditions of Theorem~\ref{Rauzy} (for respective integers $p$). The identity $\Fa=X_0 \cup \cdots \cup X_{p-1}$ is  easy to see. 
\end{proof}

We introduce three further families of substitutions and show that they are intimately related with tent-tiles.
The proofs of the respective theorems run with the strategy used in the proof of Theorem~\ref{Rauzy}.
Observe that these families of substitutions are induced by automorphisms of the free group.
These automorphisms are stated explicitly in respective remarks at the end of the each  proof.

At first we consider the substitution $\theta_q$ over the alphabet 
$\A=\{0, \ldots, q-1, 2q-1\}$ defined by
\begin{equation}\label{thetaq}
\begin{array}{lrcl@{\qquad}rcl}
\theta_q: & 0 & \mapsto & 1, & q & \mapsto & (q+1),\\
 & 1 & \mapsto& 2 & (q+1), & \mapsto &  (q+2),\\
 & & \vdots & & & \vdots & \\
& (q-2) & \mapsto & (q-1), & (2q-2) & \mapsto & (2q-1), \\
& (q-1) & \mapsto & q(q-1), & (2q-1) & \mapsto & (2q-1)0. 
\end{array}
\end{equation}
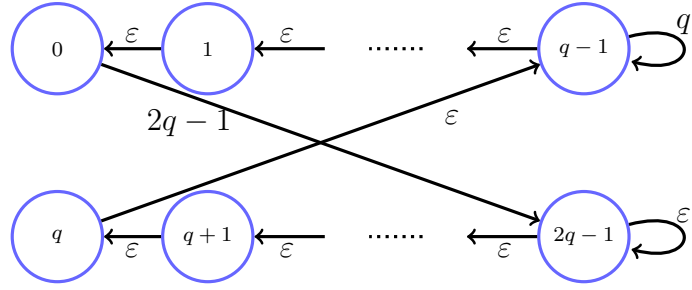
\begin{figure}[ht]
\begin{tikzpicture}

\node[circle,draw=blue!60, very thick, minimum size=12mm] (v0) {\tiny $0$};
\node[circle,draw=blue!60, very thick, minimum size=12mm,right of = v0, node distance=2cm] (v1)  {\tiny $1$};
\node[right of = v1, node distance=2cm] (v2)  {};
\node[right of = v2, node distance=1cm] (vq-2)  {};
\node[circle,draw=blue!60, very thick, minimum size=12mm,right of = vq-2, node distance=2cm] (vq-1)  {\tiny $q-1$};

\node[circle,draw=blue!60, very thick, minimum size=12mm, below of = v0, node distance=2.5cm] (vb0) {\tiny $q$};
\node[circle,draw=blue!60, very thick, minimum size=12mm,right of = vb0, node distance=2cm] (vb1)  {\tiny $q+1$};
\node[right of = vb1, node distance=2cm] (vb2)  {};
\node[right of = vb2, node distance=1cm] (vbq-2)  {};
\node[circle,draw=blue!60, very thick, minimum size=12mm,right of = vbq-2, node distance=2cm] (vbq-1)  {\tiny $2q-1$};

every edge quotes/.style = {font=\footnotesize, shorten >=1pt},

\draw[->,black, very thick] (v1) edge  node[pos=0.5, above=-2pt] {$\varepsilon$} (v0);
\draw[->,black, very thick, shorten <=0.3cm] (v2) edge  node[pos=0.65, above=-2pt] {$\varepsilon$} (v1);
\draw[-,black, very thick, dotted] (vq-2) edge (v2);
\draw[->,black, very thick, shorten >=0.3cm] (vq-1) edge  node[pos=0.35, above=-2pt] {$\varepsilon$} (vq-2);

\draw[->,black, very thick, loop right] (vq-1) edge  node[pos=0.5, above=1pt] {$q$} (vq-1);

\draw[->,black, very thick] (vb1) edge  node[pos=0.5, below=-2pt] {$\varepsilon$} (vb0);
\draw[->,black, very thick, shorten <=0.3cm] (vb2) edge  node[pos=0.65, below=-2pt] {$\varepsilon$} (vb1);
\draw[-,black, very thick, dotted] (vbq-2) edge (vb2);
\draw[->,black, very thick, shorten >=0.3cm] (vbq-1) edge  node[pos=0.35, below=-2pt] {$\varepsilon$} (vbq-2);

\draw[->,black, very thick, loop right] (vbq-1) edge  node[pos=0.5, above=1pt] {$\varepsilon$} (vbq-1);

\draw[->,black, very thick] (v0) edge  node[pos=0.2, below=1pt] {$2q-1$} (vbq-1);

\draw[->,black, very thick] (vb0) edge  node[pos=0.8, below=1pt] {$\varepsilon$} (vq-1);

\end{tikzpicture}

\caption{Prefix graph of the substitution $\theta_q$. }
\label{thetaqgraph}
\end{figure}
Its prefix graph $\Gamma_{\theta_q}$ is depicted in Figure~\ref{thetaqgraph}. Note that $\theta_q^{2q}(k)$ starts with the letter $2q-1$ for each $k \in \A$. Therefore, $\zeta_q$ satisfies the strong coincidence condition.

\begin{theorem}\label{iwip1}
Let $\alpha$ be a special Pisot unit such that $\alpha^q=\beta$ holds for an integer $q \geq 2$, i.e. $q \in \{2,3,4,5\}$,
and define for each $k \in \A:=\{0, \ldots, 2q-1\}$ the set $X_k$ by
\[
X_k = \begin{cases}
 \bfPsi{\alpha^k}-\fL^k \circ \fR(\Fa), & \text{if } k \in \{0, \ldots, q-2\}; \\
\bfPsi{\alpha^{q-1}}-\fL^{q-1} (\Fa), & \text{if } k =q-1;\\
\fL^{k-q} \circ \fR(\Fa), & \text{if } k \in \{q, \ldots, 2q-2\}; \\
\fL^{q-1} (\Fa), & \text{if } k =2q-1. 
\end{cases}\] 
Then $\theta_q$ (as defined in \eqref{thetaq}) is a primitive unimodular Pisot substitution with dominant root $\alpha$
such that $\R_{\theta_q}(k) \cong X_k$ holds (up to isomorphism) for each $k \in \A$.
\end{theorem}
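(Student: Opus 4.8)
The plan is to reproduce, in this reducible setting, the argument used for Theorem~\ref{Rauzy}: construct a linear map $\bfP$ intertwining the incidence matrix $\bfM_{\theta_q}$ with $\bfA_\alpha$, transport the GIFS $(\Gamma_{\theta_q},g)$ that defines $\R_{\theta_q}$ to a GIFS on $\RR^d$, verify that the prescribed list $\{X_k\}_{k\in\A}$ solves the transported system of set equations, and conclude by the uniqueness of the invariant set list (Proposition~\ref{GIFSpropo}). The structural claims come first. Primitivity of $\theta_q$ is immediate, since the prefix graph of Figure~\ref{thetaqgraph} is strongly connected, so some power of $\bfM_{\theta_q}$ is strictly positive. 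The columns of $\bfM_{\theta_q}$ are $e_1,\ldots,e_{q-1},\ e_{q-1}+e_q,\ e_{q+1},\ldots,e_{2q-1},\ e_0+e_{2q-1}$; subtracting row $q$ from row $q-1$ and row $0$ from row $2q-1$ turns $\bfM_{\theta_q}$ into the permutation matrix of a $2q$-cycle, so $\det\bfM_{\theta_q}=\pm1$ and $\theta_q$ is unimodular. For the dominant eigenvalue one exhibits the strictly positive left eigenvector
\[\bm{u}:=(1,\alpha,\ldots,\alpha^{q-1},\,1,\alpha,\ldots,\alpha^{q-1})\in\QQ(\alpha)^{2q},\]
the identity $\bm{u}\,\bfM_{\theta_q}=\alpha\,\bm{u}$ using only $(\alpha-1)\alpha^{q-1}=1$ and $\alpha^{q-1}+1=\beta$, both of which follow from $\alpha^q=\beta=\alpha(\alpha-1)^{-1}$. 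Since $\bfM_{\theta_q}$ is primitive and $\bm{u}>\mathbf{0}$, Perron--Frobenius forces $\alpha$ to be the dominant root; $\alpha$ being Pisot by hypothesis, $\theta_q$ is a (reducible) unimodular Pisot substitution and hence induces a Rauzy fractal $\R_{\theta_q}$.

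Next I set $\bfP:=\bigl(-\bfPsi_\alpha(u_0),\ldots,-\bfPsi_\alpha(u_{2q-1})\bigr)\in\RR^{d\times 2q}$. Column by column one gets $\bfP\,\bfM_{\theta_q}=\bfA_\alpha\bfP$, since the $j$-th column of the left-hand side is $-\bfPsi_\alpha\bigl((\bm{u}\,\bfM_{\theta_q})_j\bigr)=-\bfPsi_\alpha(\alpha u_j)=\bfA_\alpha\bigl(-\bfPsi_\alpha(u_j)\bigr)$. Exactly as in the proof of Theorem~\ref{Rauzy}, this produces a unique linear map $\Phi\colon\RR^d\to\mathbb{K}_c$ with $\pi=\Phi\circ\bfP$, satisfying $h\circ\Phi=\Phi\circ\bfA_\alpha$ and hence $g_U\circ\Phi(\bfx)=\Phi\bigl(\bfP\,\bfell(U)+\bfA_\alpha\bfx\bigr)$ for every prefix $U$. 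It is an isomorphism: $\alpha^q=\beta$ makes $\alpha$ a root of $t^q-t^{q-1}-1$, whence $q\ge d+1$, so $-\bfPsi_\alpha(1),\ldots,-\bfPsi_\alpha(\alpha^{q-1})$ already span $\RR^d$ ($\bfPsi_\alpha(1)$ being a cyclic vector of $\bfA_\alpha$, which has the $d$ distinct eigenvalues $\lambda_1,\ldots,\lambda_d$), and a dimension count together with the intertwining relation then identifies $\ker\bfP$ with $\mathbb{K}_e\oplus\mathbb{K}_s=\ker\pi$. Evaluating the formula for $g_U\circ\Phi$ at the three prefixes that occur as edge labels of $\Gamma_{\theta_q}$ — the empty word, the one-letter word $q$ on the loop at vertex $q-1$, and the one-letter word $2q-1$ on the edge $0\to 2q-1$ — gives $g_\varepsilon\circ\Phi=\Phi\circ\fL$, $g_q\circ\Phi(\bfx)=\Phi\bigl(-\bfPsi_\alpha(1)+\bfA_\alpha\bfx\bigr)$ and $g_{2q-1}\circ\Phi(\bfx)=\Phi\bigl(-\bfPsi_\alpha(\alpha^{q-1})+\bfA_\alpha\bfx\bigr)$.

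It then remains to check the set equations of $(\Gamma_{\theta_q},g)$ for the list $\{\Phi(X_k)\}$, the working relations being $\bfB_\alpha=\bfA_\alpha^q$, $\bfPsi_\alpha(\alpha^k)=\bfA_\alpha^k\bfPsi_\alpha(1)$, $\beta=\alpha^{q-1}+1$, $\fR(\Fa)=\bfPsi_\alpha(\beta)-\bfA_\alpha^q\Fa$ and, above all, $\Fa=\fL(\Fa)\cup\fR(\Fa)$. Running through the vertices of $\Gamma_{\theta_q}$: the edges of the two chains give the identities $X_1=\fL(X_0)$, $X_k=\fL(X_{k-1})$ for $2\le k\le q-2$, $X_q=\fL(X_{q-1})$ and $X_{q+k}=\fL(X_{q+k-1})$ for $1\le k\le q-2$, all immediate from the formulas for the $X_k$; the edge $0\xrightarrow{2q-1}2q-1$ demands $X_0=-\bfPsi_\alpha(\alpha^{q-1})+\bfA_\alpha X_{2q-1}$, which after substitution boils down exactly to $\beta=\alpha^{q-1}+1$; and the two vertices $q-1$ and $2q-1$ with two incoming edges demand $X_{q-1}=\fL(X_{q-2})\cup\bigl(-\bfPsi_\alpha(1)+\bfA_\alpha X_{q-1}\bigr)$ and $X_{2q-1}=\fL(X_{2q-2})\cup\fL(X_{2q-1})$, each of which, after the common affine factor is pulled out, is precisely $\fL^{q-1}(\Fa)=\fL^{q-1}\bigl(\fL(\Fa)\cup\fR(\Fa)\bigr)$. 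By Proposition~\ref{GIFSpropo} the list $\{\Phi(X_k):k\in\A\}$ is therefore the invariant set list of $(\Gamma_{\theta_q},g)$, that is $\R_{\theta_q}(k)=\Phi(X_k)$ for all $k\in\A$. I expect the only genuinely delicate point to be the bookkeeping of the translation vectors at the loop $q-1\xrightarrow{q}q-1$ and the edge $0\xrightarrow{2q-1}2q-1$ — that is, keeping straight how the two copies $\{0,\ldots,q-1\}$ and $\{q,\ldots,2q-1\}$ of the alphabet are positioned relative to $\bfPsi_\alpha(1)$; everything else is the same arithmetic of $\alpha$ and $\beta$ already exercised in the proof of Theorem~\ref{Rauzy}.
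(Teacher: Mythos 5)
Your proof is correct and follows essentially the same route as the paper: the same left eigenvector $(\bm{u},\bm{u})$ with $\bm{u}=(1,\alpha,\ldots,\alpha^{q-1})$, the same intertwining matrix $\bfP$ and induced map $\Phi$, the same three realisation formulas for $g_\varepsilon$, $g_q$, $g_{2q-1}$, and the same vertex-by-vertex verification of the set equations followed by an appeal to the uniqueness of the invariant set list. The only differences are cosmetic: you identify the dominant eigenvalue via Perron--Frobenius applied to the positive left eigenvector instead of citing the block-matrix argument from Arnoux--Berth\'e--Hilion--Siegel, and you make explicit the unimodularity and the well-definedness of $\Phi$, which the paper leaves implicit.
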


\begin{proof}
The substitution $\theta_q$ is obviously primitive. Its incidence matrix  is given by the block matrix
\[\bfM_{\theta_q}=\left(\begin{array}{cc} \bfM^+_{q}  & \bfM^-_{q}\\ \bfM^-_{q}  & \bfM^+_{q} \end{array}\right),\]
where
\begin{equation}\label{M+M-}
\bfM^+_{{q}} = \left(\begin{array}{ccccc}  0 & \cdots &\cdots & 0 & 0 \\ 1 & \ddots & & \vdots & \vdots \\ 0 & \ddots & \ddots & \vdots & \vdots\\
\vdots & \ddots & \ddots & 0 & 0 \\ 0 & \cdots & 0 & 1 & 1
\end{array}\right) \in \RR^{q\times q} \text{ and }
\bfM^-_{{q}} = \left(\begin{array}{cccc}  0  &\cdots & 0 & 1 \\ \vdots & &  \vdots & 0 \\ \vdots & &  \vdots & \vdots\\
0 & \cdots & 0 & 0
\end{array}\right)\in \RR^{q\times q}.
\end{equation}

By observing the proof of \cite[Proposition~4.9]{Arnoux-Berthe-Hilion-Siegel.2006} we see that the dominant eigenvalue of $\bfM_{\theta_q}$ is given by the dominant eigenvalue of $\bfM_{q}^+ + \bfM^-_{q}$ which can easily seen to be the dominant root
of $t^q-t^{q-1} -1$. From this we conclude that this root must coincide with $\alpha$ and we have $\KK_\alpha \cong \KK_{\theta_q}$.
We proceed as in  Theorem~\ref{Rauzy} and show that the set list $\{\Psi(X_k): k \in \A\}$ satisfies the system set
equations \eqref{setequations} induces by the GIFS $(\Gamma_{\theta_q}, g)$ for a suitable isomorphism $\Phi:\KK_\alpha \longmapsto \KK_{\theta_q}$. In particular, as before we define $\Phi$ to be the uniquely determined isomorphism that satisfies $\pi(x)=-\Phi\circ\bfPsi{x}$ for all $x \in \QQ(\alpha)$.
Again we have $h \circ \Phi = \Phi \circ\bfPhi{\alpha}$.

Let $\bfnu=(1, \alpha, \ldots, \alpha^{q-1})$ and observe that $(\bfnu, \bfnu)$ is a left eigenvector of $\bfM_{\theta_q}$ with respect to $\alpha$. 
The prefixes that occur in the prefix graph $\Gamma_{\theta_q}$ (see Figure~\ref{thetaqgraph})  are $\varepsilon, q, 2q-1$. By considering the realisation $g$ we obtain for each $\bfx \in \RR^d$
\[
\renewcommand{\arraystretch}{1.3}
\begin{array}{rll}
g_\varepsilon \circ \Phi(\bfx)& = \Phi\circ\bfPhi{\alpha}(\bfx)   & = \Phi\circ\fL(\bfx), \\
g_{q}\circ \Phi(\bfx)& = \Phi\big(-\bfPsi{1} + \bfPhi{\alpha}(\bfx)\big) &= \Phi\big(-\bfPsi{1} + \fL(\bfx)\big), \\ 
g_{2q-1}\circ \Phi(\bfx)&=   
\Phi\big(-\bfPsi{\alpha^{q-1}} +  \bfPsi{\alpha}(\bfx)\big) \\
&=  \Phi\big(\bfPsi{1-\alpha^{q}} + \bfPsi{\beta\alpha^{1-q}}(\bfx) \big)
&= \Phi\big(\bfPsi{1} - f_R\circ f_L^{1-q}(\bfx)\big).
\end{array}\]

Now, for $k=0$ we easily calculate
\begin{align*}
\Phi(X_0) = &  \Phi\big(\bfPsi{1}  - \fR(\Fa)\big) = \Phi\big(\bfPsi{1}-\fR\circ \fL^{1-q}\circ\fL^{q-1}(\Fa)\big)
=  g_{2q-1}\circ\Phi(X_{2q-1}).
\end{align*}
For  $k \in \{1, \ldots, q-2\}$ we have
\begin{align*}
\Phi(X_{k}) & = \Phi\big(\bfPsi{\alpha^{k}} -\fL^{k}\circ \fR(\Fa)\big) =
\Phi\big(\bfPhi{\alpha}\circ\bfPsi{\alpha^{k-1}} - \bfPhi{\alpha}\circ \fL^{k-1}\circ \fR(\Fa)\big) \\
&= \Phi\circ\fL\left(\bfPsi{\alpha^{k-1}} - \fL^{k-1}\circ \fR(\Fa)\right)
= g_\varepsilon\circ\Phi(X_{k-1}).
\end{align*}
For  $k=q-1$ we obtain
\begin{align*}
\Phi(X_{q-1}) = & \Phi\big(\bfPsi{\alpha^{q-1}} - \fL^{q-1}(\Fa)\big) \\
 = & \Phi\big(\bfPsi{\alpha^{q-1}} - \fL^{q-1}\circ \fR(\Fa)\big) \cup \Phi\big(\bfPsi{\alpha^{q-1}} -\fL^{q}(\Fa)\big) \\
= & \Phi\left(\bfPsi{\alpha^{q-1}} - \fL^{q-1}\circ \fR(\Fa)\right) \cup \Phi\big( \bfPsi{1 - \alpha^q}  +\fL^{q}(\Fa)\big)\\
= & \Phi\circ \fL \big(\bfPsi{\alpha^{q-2}} - \fL^{q-2}\circ \fR(\Fa)\big) \cup \Phi\left( -\bfPsi{1} +\fL\left(\bfPsi{\alpha^{q-1}}  -\fL^{q-1}(\Fa) \right)\right) \\
= & g_{\varepsilon}\circ\Phi(X_{q-2})  \cup g_{q}\circ\Phi(X_{q-1}).
\end{align*}
For $k=q$ we see that
\begin{align*}
\Phi(X_{q}) = & \Phi\circ\fR(\Fa) = \Phi\big(\fL^q \,\left(\bfPsi{1} - \Fa\right)\big) \\
= & \Phi\circ\fL\left(\bfPsi{\alpha^{q-1}} - \fL^{q-1}(\Fa)\right) =  g_{\varepsilon}\circ\Phi(X_{q-1}) .
\end{align*}
Now we let $k \in \{q+1, \ldots, 2q-1\}$. Here
trivial computations yield
\[
\Phi(X_{k}) =   \Phi\circ\fL^{k-q}\circ \fR(\Fa) =  \Phi\circ\fL \circ \fL^{k-q-1}\circ \fR(\Fa)
=  g_\varepsilon\circ\Phi(X_{k-1}).
\]
Finally, for  $k=2q-1$ we have
\[
\Phi(X_{2q-1}) =   \Phi\circ\fL^{q-1}(\Fa) = \Phi\left(\fL^{q-1}\circ \fR(\Fa)\right) \cup  \Phi\circ\fL^{q}(\Fa)\\
 = 
g_{\varepsilon}\circ\Phi(X_{2q-2}) \cup 
g_{\varepsilon}\circ\Phi(X_{2q-1}).
\]
\end{proof}
The following observation can be obtained immediately from the theorem. 
\begin{corollary}\label{iwip1cor}
Let $\alpha$ be a special Pisot unit such that $\alpha^q=\beta$ holds for an integer $q\geq 2$.
Then $\Fa$ coincides with $\R_{\theta_q}(q) \cup \cdots \cup  \R_{\theta_q}(2q-1)$ up to a linear transformation.
\end{corollary}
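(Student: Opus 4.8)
The plan is to derive Corollary~\ref{iwip1cor} directly from Theorem~\ref{iwip1} by showing that the union $X_q \cup \cdots \cup X_{2q-1}$ of the specified subtiles equals $\Fa$, since $\Phi$ is a linear isomorphism and $\R_{\theta_q}(k)=\Phi(X_k)$ for each $k$. By the definition of $X_k$ in the statement of Theorem~\ref{iwip1} we have
\[
X_q \cup \cdots \cup X_{2q-1} = \fL^{q-1}(\Fa) \cup \bigcup_{k=q}^{2q-2} \fL^{k-q}\circ\fR(\Fa)
= \fL^{q-1}(\Fa) \cup \bigcup_{j=0}^{q-2} \fL^{j}\circ\fR(\Fa).
\]
So the task reduces to proving that this last union equals $\Fa$.

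To see this, I would unfold the fixed-point equation $\Fa = \fL(\Fa) \cup \fR(\Fa)$ iteratively. Applying it once gives $\Fa = \fR(\Fa) \cup \fL(\Fa)$; applying it again inside the $\fL$-term gives $\Fa = \fR(\Fa) \cup \fL\circ\fR(\Fa) \cup \fL^2(\Fa)$; after $q-1$ iterations one obtains
\[
\Fa = \bigcup_{j=0}^{q-2} \fL^{j}\circ\fR(\Fa) \;\cup\; \fL^{q-1}(\Fa),
\]
which is exactly the union computed above. This is a completely routine induction on the number of unfoldings, using only the set equation of Definition~\ref{deftenttile}; no appeal to contractivity or uniqueness is needed for this part, merely the single identity $\Fa=\fL(\Fa)\cup\fR(\Fa)$ (which was already used in the proof of Theorem~\ref{iwip1}).

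Putting the pieces together: applying $\Phi$ to both sides and using $\R_{\theta_q}(k)=\Phi(X_k)$ from Theorem~\ref{iwip1},
\[
\R_{\theta_q}(q) \cup \cdots \cup \R_{\theta_q}(2q-1) = \Phi\!\left(\bigcup_{k=q}^{2q-1} X_k\right) = \Phi(\Fa),
\]
so $\Fa$ and $\R_{\theta_q}(q) \cup \cdots \cup \R_{\theta_q}(2q-1)$ coincide up to the linear isomorphism $\Phi$, as claimed. There is no real obstacle here; the only thing to be slightly careful about is the bookkeeping of indices (note that $X_{q-1}=X_{2q-1}$ up to the translation term and that the ``complementary'' tiles $X_0,\ldots,X_{q-1}$ are not needed), and the fact that the hypothesis $\alpha^q=\beta$ for an integer $q\ge 2$ is precisely what makes Theorem~\ref{iwip1} applicable. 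One could optionally remark that the union $X_0\cup\cdots\cup X_{q-1}$ gives $\bfPsi_\alpha$-translates of the same pieces, reflecting the ``with reflection'' phenomenon, but that is not required for the corollary.
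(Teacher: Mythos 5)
Your proposal is correct and matches the paper's (unstated) argument: the paper simply declares the corollary to be an immediate consequence of Theorem~\ref{iwip1}, and the content you supply — the identification $X_q\cup\cdots\cup X_{2q-1}=\bigcup_{j=0}^{q-2}\fL^j\circ\fR(\Fa)\cup\fL^{q-1}(\Fa)$ together with the $(q-1)$-fold unfolding of $\Fa=\fL(\Fa)\cup\fR(\Fa)$ — is exactly the intended bookkeeping. No gaps.
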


\begin{remark}\label{remark1}
Observe that the substitution $\theta_q$ is induced by an automorphism of the free group. In particular, $\theta_q$ is the (non-orientable) double substitution (in the sense of
\cite[Definition~3.1]{Arnoux-Berthe-Hilion-Siegel.2006}) of the automorphism $\varphi_q$ of the free group generated by $\{0, \ldots, q-1\}$ defined by
\[\varphi_q: 0 \mapsto 1, 1 \mapsto 2, \ldots, (q-2) \mapsto (q-1), (q-1) \mapsto (q-1)1^{-1}.\]
The case  $\varphi_5$ is discussed in \cite[Example~5.4]{Arnoux-Berthe-Hilion-Siegel.2006} and we find our tent-tile $\F_{\alpha_5}$ in the respective figure.
\end{remark}

Lets turn to the next family of substitutions that we denote by $\theta'_q$ with $q$ a positive integer. The exact definition depends on whether $q$ is odd or even. 

\begin{subequations}
If $q \equiv 1 \, (\mod 2)$ then $\theta'_q$ is defined over the alphabet $\A=\{0, \ldots, 2q-1\}$ by
\begin{equation}\label{thetasta}
\begin{array}{cl@{\,\mapsto\,}lcl@{\,\mapsto\,}ll@{\,\mapsto\,}l}
\theta'_q: & 0  &(q+1),  & \ldots, & (q-2) & (2q-1), & (q-1) & 0(2q-1) \\
& q & 1,  & \ldots, & (2q-2)& (q-1), & (2q-1)& (q-1)q.
\end{array} 
\end{equation}
For $q \equiv 0 \, (\mod 2)$ the substitution  $\theta'_q$ is defined over the alphabet $\A=\{0, \ldots, q-1\}$ by
\begin{equation}\label{thetastb}
\theta'_q: 0 \mapsto 2, 1 \mapsto 3, \ldots, (q-3) \mapsto (q-1), (q-2) \mapsto 0(q-1), (q-1) \mapsto 0(q-1)1.
\end{equation}
\end{subequations}
For the prefix graph see Figure~\ref{thetaqsgraph}.
Note that $\theta'_2: 0 \mapsto 01, 1 \mapsto 011$.


\begin{figure}[ht]
\vskip 0cm
\begin{tikzpicture}

\node[circle,draw=blue!60, very thick, minimum size=12mm] (v0) {\tiny $0$};
\node[circle,draw=blue!60, very thick, minimum size=12mm,right of = v0, node distance=1.5cm] (v1)  {\tiny $1$};
\node[right of = v1, node distance=1.5cm] (v2)  {};
\node[right of = v2, node distance=0.7cm] (vq-2)  {};
\node[circle,draw=blue!60, very thick, minimum size=12mm,right of = vq-2, node distance=1.5cm] (vq-1)  {\tiny $q-1$};

\node[circle,draw=blue!60, very thick, minimum size=12mm, below of = v0, node distance=2cm] (vb0) {\tiny $q$};
\node[circle,draw=blue!60, very thick, minimum size=12mm,right of = vb0, node distance=1.5cm] (vb1)  {\tiny $q+1$};
\node[right of = vb1, node distance=1.5cm] (vb2)  {};
\node[right of = vb2, node distance=0.7cm] (vbq-2)  {};
\node[circle,draw=blue!60, very thick, minimum size=12mm,right of = vbq-2, node distance=1.5cm] (vbq-1)  {\tiny $2q-1$};

\node[circle,draw=blue!60, very thick, minimum size=10mm, right of = vq-1, node distance=3cm] (vx0) {\tiny $0$};
\node[circle,draw=blue!60, very thick, minimum size=10mm,right of = vx0, node distance=1.7cm] (vx2)  {\tiny $2$};
\node[right of = vx2, node distance=1.7cm] (vx4)  {};
\node[right of = vx4, node distance=0.7cm] (vxq-4)  {};
\node[circle,draw=blue!60, very thick, minimum size=10mm,right of = vxq-4, node distance=1.7cm] (vxq-2)  {\tiny $q-2$};

\node[circle,draw=blue!60, very thick, minimum size=10mm, below of = vx0, node distance=2cm] (vx1) {\tiny $1$};
\node[circle,draw=blue!60, very thick, minimum size=10mm,right of = vx1, node distance=1.7cm] (vx3)  {\tiny $3$};
\node[right of = vx3, node distance=1.7cm] (vx5)  {};
\node[right of = vx5, node distance=0.7cm] (vxq-3)  {};
\node[circle,draw=blue!60, very thick, minimum size=10mm,right of = vxq-3, node distance=1.7cm] (vxq-1)  {\tiny $q-1$};

every edge quotes/.style = {font=\footnotesize, shorten >=1pt},

\draw[->,black, very thick] (v1) edge  node[pos=0.1, below=0pt] {$\varepsilon$} (vb0);
\draw[->,black, very thick, shorten <=0.45cm] (v2) edge  node[pos=0.3, below=0pt] {$\varepsilon$} (vb1);
\draw[-,black, very thick, dotted] (vq-2) edge (v2);
\draw[->,black, very thick, shorten >=0.45cm] (vq-1) edge  node[pos=0.7, above=0pt] {$\varepsilon$} (vbq-2);

\draw[->,black, very thick] (vb1) edge  node[pos=0.1, above=0pt] {$\varepsilon$} (v0);
\draw[->,black, very thick, shorten <=0.45cm] (vb2) edge  node[pos=0.3, above=0pt] {$\varepsilon$} (v1);
\draw[-,black, very thick, dotted] (vbq-2) edge (vb2);
\draw[->,black, very thick, shorten >=0.45cm] (vbq-1) edge  node[pos=0.7, below=0pt] {$\varepsilon$} (vq-2);

\draw[->,black, very thick, bend right=10] (vq-1) edge  node[pos=0.5, left=-3pt] {$\varepsilon$} (vbq-1);
\draw[->,black, very thick, bend right=10] (vbq-1) edge  node[pos=0.5, right=-3pt] {$0$} (vq-1);
\draw[->,black, very thick, bend right=30] (vb0) edge  node[pos=0.2, below=0pt] {$q-1$} (vbq-1);
\draw[->,black, very thick, bend left=30] (v0) edge  node[pos=0.2, above=0pt] {$\varepsilon$} (vq-1);

\draw[->,black, very thick] (vx2) edge  node[pos=0.5, above=-2pt] {$\varepsilon$} (vx0);
\draw[->,black, very thick, shorten <=0.3cm] (vx4) edge  node[pos=0.65, above=-2pt] {$\varepsilon$} (vx2);
\draw[-,black, very thick, dotted] (vxq-4) edge (vx4);
\draw[->,black, very thick, shorten >=0.3cm] (vxq-2) edge  node[pos=0.35, above=-2pt] {$\varepsilon$} (vxq-4);

\draw[->,black, very thick] (vx3) edge  node[pos=0.5, below=-2pt] {$\varepsilon$} (vx1);
\draw[->,black, very thick, shorten <=0.3cm] (vx5) edge  node[pos=0.65, below=-2pt] {$\varepsilon$} (vx3);
\draw[-,black, very thick, dotted] (vxq-3) edge (vx5);
\draw[->,black, very thick, shorten >=0.3cm] (vxq-1) edge  node[pos=0.35, below=-2pt] {$\varepsilon$} (vxq-3);

\draw[->,black, very thick, loop right] (vxq-1) edge  node[pos=0.5, above=1pt] {$0$} (vxq-1);
\draw[->,black, very thick, bend left=25] (vx0) edge  node[pos=0.2, above=0pt] {$\varepsilon$} (vxq-2);
\draw[->,black, very thick, bend right=25] (vx1) edge  node[pos=0.2, below=0pt] {$0(q-1)$} (vxq-1);
\draw[->,black, very thick] (vx0) edge  node[pos=0.5, above=-2pt] {$\varepsilon$} (vxq-1);
\draw[->,black, very thick] (vxq-1) edge  node[pos=0.5, right=-3pt] {$0$} (vxq-2);
\end{tikzpicture}
\caption{Prefix graph of the substitution $\theta'_q$ for $q$ odd (left) and $q\geq 4$ even (right). }
\label{thetaqsgraph}
\end{figure}
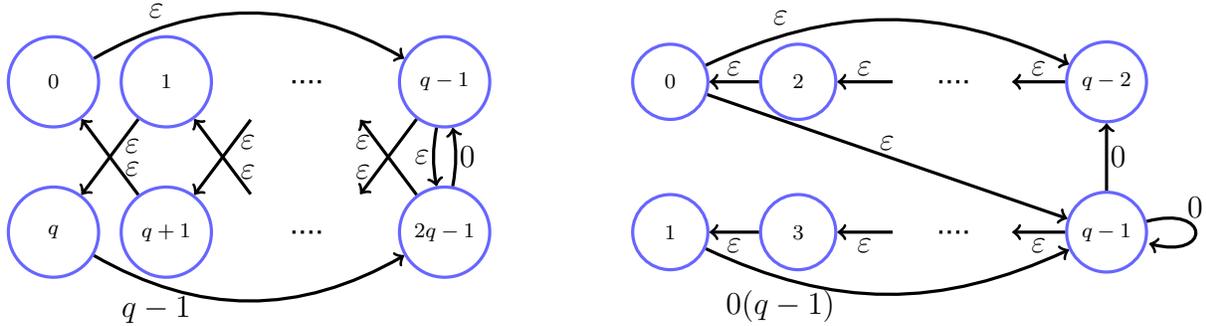

\begin{theorem}\label{iwip2}
Let $\alpha$ be a special Pisot unit such that $\alpha=\beta^q$ holds for an integer $q\geq 2$, i.e. $q \in \{2,3,4,5\}$.
Then $\theta'_q$ (as defined in \eqref{thetasta} and \eqref{thetastb}, respectively) is a primitive unimodular Pisot substitution. The dominant root is $\beta$ if $q$ is odd, and $\beta^2$ if $q$ is even.
Furthermore, $\R_{\theta_q}(k) \cong  X_k$ holds for each $k \in \A$ with
\[
X_k = \begin{cases}
\bfPhi{\alpha\beta^k}( \Fa),  & \text{if } k \in \{0, 2, \ldots, q-3\}; \\
\bfPsi{\beta^{k}}-\bfPhi{\alpha\beta^k}( \Fa),   & \text{if } k \in \{1, 3, \ldots, q-2\}; \\
\bfPhi{\beta^{q-1}}( \Fa),  & \text{if } k=q-1; \\
\bfPsi{\beta^{k-q}}-\bfPhi{\alpha\beta^{k-q}}( \Fa),  & \text{if } k \in \{q, q+2, \ldots, 2q-3\}; \\
\bfPhi{\alpha\beta^{k-q}}(\Fa),  & \text{if } k \in \{q+1, q+3, \ldots, 2q-2\}; \\
\bfPsi{\beta^{q-1}}-\bfPhi{\beta^{q-1}} (\Fa),   & \text{if } k =2q-1;
\end{cases}\]
for odd $q$ and
\[
X_k = \begin{cases}
\bfPhi{\alpha\beta^k} ( \Fa), & \text{if } k \in \{0, 2, \ldots, q-2\}; \\
\bfPsi{\beta^{k}}-\bfPhi{\alpha\beta^k} ( \Fa), & \text{if } k \in \{1, 3, \ldots, q-3\}; \\
\bfPsi{\beta^{q-1}}-\bfPhi{\beta^{q-1}} ( \Fa), & \text{if } k=q-1;
\end{cases}\]
for even $q$.
\end{theorem}
\begin{proof}
We proceed as in the Theorems~\ref{Rauzy} and \ref{iwip1} and show that for a 
suitable isomorphism $\Phi$
the set list $\{\Phi(X_k): k \in \A\}$ satisfies the system of set equations \eqref{setequations} induced by the GIFS $(\Gamma_{\theta'_q}, g)$.

We start with the  case $q \equiv 1 (\mod 2)$. The primitivity is easy to see.
We also see that for the incidence matrix we have
\[\bfM_{\theta'_q}=\left(\begin{array}{cc} \bfM^-_{q} & \bfM^+_{q}  \\  \bfM^+_{q} & \bfM^-_{q}\end{array}\right)\]
with $\bfM^+_{q}$ and $\bfM^-_{q}$ as defined in \eqref{M+M-}.
This immediately shows that $\beta$ is the dominant eigenvalue of $\bfM_{\theta'_q}$, hence, $\theta'_q$ is a Pisot substitution. Since $\QQ(\beta) =\QQ(\alpha)$ we obviously have that $\KK_\alpha \cong \KK_{\theta'_q}$.

For $\bfnu:=(1, \beta, \ldots, \beta^{q-1})$ the vector $(\bfnu, \bfnu)$ is a left eigenvector of $\bfM_{\theta'_q}$ with respect to $\beta$. 
Similar as before we define $\Phi:\KK_\alpha \longmapsto \KK_{\theta'_p}$ to be the uniquely determined isomorphism that satisfies $\pi(x)=-\Phi\circ\bfPsi{x}$ for all $x \in \QQ(\alpha)$. Observe that here $\Phi$ commutes $h$ and $\bfPhi{\beta}$, i.e. $h \circ \Phi = \Phi \circ\bfPhi{\beta}$.

The prefixes that occur in 
the prefix graph $\Gamma_{\theta'_q}$ (see Figure~\ref{thetaqsgraph})  are $\varepsilon, 0, q-1$. For the realisation $g$ we thus obtain
\begin{align*}
g_\varepsilon\circ \Phi(\bfx) = & \Phi\circ\bfPhi{\beta}(\bfx), \\
g_{0}\circ \Phi(\bfx)  = & \Phi\left(-\bfPsi{1} + \bfPhi{\beta}(\bfx) \right), \\ 
g_{q-1}\circ \Phi(\bfx) = &  \Phi\left(-\bfPsi{\beta^{q-1}} + \bfPhi{\beta}(\bfx)\right).
\end{align*}

For each $k \in \A=\{0,\ldots, 2q-1\}$ we have a set equation to verify.
For $k=0$ we immediately see that
\[\Phi(X_0)=\Phi\circ\bfPhi{\alpha}(\Fa) = \Phi\circ\bfPhi{\beta^q}(\Fa) = \Phi\circ\bfPhi{\beta}\circ\bfPhi{\beta^{q-1}}(\Fa) =  g_\varepsilon\circ\Phi(X_{q-1}).\]
The cases $k \in \{1, \ldots q-2\}$ can be shown without any problems.
For $k=q-1$ we have
\begin{align*}
\Phi(X_{q-1})  & = \Phi\circ\bfPhi{\beta^{q-1}} \big(\fL(\Fa) \cup \fR(\Fa)\big) = g_{\varepsilon}\circ\Phi\circ\bfPhi{\beta^{q-2}} \big(\fL(\Fa) \cup \fR(\Fa)\big) \\
 &=  g_{\varepsilon}\circ\Phi\circ\bfPhi{\alpha\beta^{q-2}}(\Fa) \cup g_{\varepsilon}\circ\Phi\big(\bfPsi{\beta^{q-1}}-\bfPhi{\beta^{q-1}}(\Fa)\big) \\
 & = g_{\varepsilon}\circ\Phi(X_{2q-2}) \cup  g_{\varepsilon}\circ\Phi(X_{2q-1}).
\end{align*}
For the case $k=q$ we observe that $\beta^q-\beta^{q-1}=1$ and obtain

\begin{align*}
\Phi(X_q)& =\Phi\big(\bfPsi{1} - \bfPhi{\alpha}(\Fa)\big) = \Phi\big(- \bfPsi{\beta^{q-1} + \beta^{q}} - \bfPhi{\beta^q}(\Fa)\big) \\
& = g_{q-1} \circ\Phi\big(\bfPsi{\beta^{q-1}} - \bfPhi{\beta^{q-1}}(\Fa)\big)
= g_{q-1} \circ\Phi(X_{2q-1}).
\end{align*}
For $k \in \{q+1, \ldots 2q-2\}$ the proof is straightforward.
Finally, for $k=2q-1$ we see that 
\begin{align*}
\Phi(X_{2q-1})& =\Phi\big(\bfPsi{\beta^{q-1}} -\bfPhi{\beta^{q-1}}(\Fa) \big) = \Phi\big(\bfPsi{\beta^{q-1}} -\bfPhi{\beta^{q-1}}\left(\fL(\Fa) \cup \fR(\Fa)\right)\big)  \\
&=  \Phi\circ \bfPhi{\beta}\left(\bfPsi{\beta^{q-2}} -\bfPhi{\alpha\beta^{q-2}} (\Fa) \right) \cup  \Phi\left(\bfPsi{\beta^{q-1} - \beta^q} +\bfPhi{\beta^{q}}(\Fa)\right) \\
&=  g_{\varepsilon}\circ\Phi\big(\bfPsi{\beta^{q-2}} -\bfPhi{\alpha\beta^{q-2}} (\Fa) \big) \cup  \Phi\big(-\bfPsi{1} +\bfPhi{\beta\beta^{q-1}}(\Fa)\big) \\
 &=  g_{\varepsilon}\circ\Phi(X_{q-2}) \cup  g_{0}\circ\Phi(X_{q-1}).
\end{align*}

For even $q$ the substitution $\theta'_q$ has a similar shape as $(\theta'_q)^2$ for odd $q$. 
Indeed, the incidence matrix of $\theta'_q$ is given by
\[\bfM_{\theta'_q} = (\bfM^+_{q} + \bfM^-_{q})^2\]
and, therefore, the dominant root is $\beta^2$.
We have $\QQ(\beta^2) =\QQ(\alpha)$, thus,  $\KK_\alpha \cong \KK_{\theta'_q}$.

We perform the proof for $q=4$ here. The case $q=2$ is degenerated and, hence, differs in some details. We leave the explicit proof for $q=2$ to the interested reader.

The vector $\bfnu$ from the odd case is a left eigenvector with respect to $\beta^2$.
We define $\Phi:\KK_\alpha \longmapsto \KK_{\theta'_4}$ such that $\pi(x)=-\Phi\circ\bfPsi{x}$ for all $x \in \QQ(\alpha)$. Note that we have $h \circ \Phi = \Phi \circ\bfPhi{\beta^2}$, i.e.
\[g_U \circ \Phi(\bfx)= \pi(\spk{\bfnu, \bfell(U)}) + h\circ\Phi(\bfx) =   \Phi\left(-\bfPsi{\spk{\bfnu,\bfell(U)}}+\bfPhi{\beta^2}(\bfx)\right),\]
where $U \in \{\varepsilon, 0, 03\}$ is one of the occurring prefixes. 

For $k=0$ we calculate
\begin{align*}
\Phi(X_0) = & \Phi\circ\bfPhi{\alpha}(\Fa) = \Phi\circ\bfPhi{\beta^q}(\Fa)=
g_{\varepsilon}\circ \Phi\circ\bfPhi{\beta^{2}}\big(\fL(\Fa) \cup \fR(\Fa)\big) \\
= & g_{\varepsilon}\circ \Phi\circ\bfPhi{\alpha\beta^{2}}(\Fa) \cup
g_{\varepsilon}\circ \Phi\big(\bfPsi_\alpha(\beta^{3}) - \bfPhi{\beta^{3}}(\Fa)\big)\\
= & g_{\varepsilon}\circ \Phi(X_{2}) \cup g_{\varepsilon} \circ \Phi(X_{3}).
\end{align*}
For showing the equation associated with $k = 1$ we again observe that $\beta^q=\beta^{q-1}+1$.
\begin{align*}
\Phi(X_1)  & =\Phi\big(\bfPsi{\beta}-\bfPhi{\alpha\beta}(\Fa)\big)
=  \Phi\big(\bfPsi{-\beta^4+\beta^{5}}-\bfPhi{\beta^{5}}(\Fa)\big) \\
 &= \Phi\big(\bfPsi{-1-\beta^{3}}+\bfPhi{\beta^{2}}(\bfPsi{\beta^{3}}-\bfPhi{\beta^{3}}(\Fa))\big)
= g_{03} \circ \Phi(X_{3}).
\end{align*}
For $k =2$ the verification of the set equation is trivial. Thus, lets turn to the case $k=3$. Here we have
\begin{align*}
\Phi(X_{3}) =& 
\Phi\big(\bfPsi{\beta^{3}} - \bfPhi{\beta^{3}}\left(\fL(\Fa) \cup \fR(\Fa)\right) \big)\\
= & g_\varepsilon\circ\Phi\big(\bfPsi{\beta^{1}} - \bfPhi{\alpha\beta^{1}}(\Fa)\big) \cup \Phi\big(-\bfPsi{1} + \bfPhi{\beta^{4}}(\Fa)\big) \\
= & g_{\varepsilon}\circ \Phi(X_{1}) \cup g_{0}\circ \Phi\circ\bfPhi{\beta^{2}}\left(\fL(\Fa) \cup \fR(\Fa)\right) = \\
= & g_{\varepsilon}\circ \Phi(X_{1}) \cup g_{0}\circ \Phi\circ\bfPhi{\alpha\beta^{2}}(\Fa)  \cup g_{0}\circ \Phi\big(\bfPsi{\beta^{3}} - \bfPhi{\beta^{3}}(\Fa)\big)
\\
= & g_{\varepsilon}\circ \Phi(X_{1}) \cup 
g_{0}\circ \Phi(X_{2}) \cup g_{0}\circ \Phi(X_{3}).
\end{align*}
\end{proof}

\begin{remark}\label{remark2}
The substitution $\theta'_q$ is the double substitution of the automorphism  of the free group generated by $\{0, \ldots, q-1\}$ 
\[\varphi'_q : \begin{cases}
0 \mapsto 1^{-1}, 1 \mapsto 2^{-1},\ldots, (q-2) \mapsto (q-1)^{-1}, (q-1) \mapsto 0(q-1)^{-1}, & \text{if $q$ is odd}; \\
0 \mapsto 1^{-1}, 1 \mapsto 2^{-1},\ldots, (q-2) \mapsto (q-1)^{-1}, (q-1) \mapsto (q-1)^{-1}0^{-1}, & \text{if $q$ is even}.
\end{cases}
\]
If $q$ is even then this double-substitution is orientable (more precisely, orientation reversing, see 
\cite[Definition~3.5]{Arnoux-Berthe-Hilion-Siegel.2006}) and, hence, not primitive. In fact, one easily verifies, that in this case we have
$\theta'_q= \varphi'_q \circ \varphi'_q$.
\end{remark}

\begin{subequations}
Finally, we consider the substitution $\zeta'_p$, where $p \geq 2$ is an integer. For
odd $p$ it is defined over the alphabet $\A=\{0, \ldots, 2p-1\}$ by
\begin{equation}\label{setasta}
\begin{array}{rrcl@{\hspace{1cm}}rcl}
\zeta'_p: & 0 & \mapsto &  (p+1)\, p, & p & \mapsto & 01, \\
&1 & \mapsto & (p+2), & (p+1) & \mapsto & 2, \\
&& \vdots & & &\vdots & \\
&(p-2) &\mapsto & (2p-1), & (2p-2) &\mapsto& (p-1), \\
&(p-1) & \mapsto & 0 \, (2p-1), & (2p-1) & \mapsto &(p-1) \, p.
\end{array}
\end{equation}
If $p$ is even then $\zeta'_p$ is defined over the alphabet $\A=\{0, \ldots, p-1\}$ by
\begin{equation}\label{setastb}
\begin{array}{rrcl}
\zeta'_p: & 0 & \mapsto & 201, \\
& 1 & \mapsto  & 3, \\
&& \vdots & \\
 & (p-3) &  \mapsto & (p-1),\\
& (p-2) & \mapsto & 0(p-1),\\ 
& (p-1) & \mapsto & 0(p-1)01.
\end{array}
\end{equation}
\end{subequations}

\begin{figure}[ht]

\begin{tikzpicture}

\node[circle,draw=blue!60, very thick, minimum size=12mm] (v0) {\tiny $0$};
\node[circle,draw=blue!60, very thick, minimum size=12mm,right of = v0, node distance=1.5cm] (v1)  {\tiny $1$};
\node[right of = v1, node distance=1.5cm] (v2)  {};
\node[right of = v2, node distance=0.7cm] (vp-2)  {};
\node[circle,draw=blue!60, very thick, minimum size=12mm,right of = vp-2, node distance=1.5cm] (vp-1)  {\tiny $p-1$};

\node[circle,draw=blue!60, very thick, minimum size=12mm, below of = v0, node distance=2cm] (vb0) {\tiny $p$};
\node[circle,draw=blue!60, very thick, minimum size=12mm,right of = vb0, node distance=1.5cm] (vb1)  {\tiny $p+1$};
\node[right of = vb1, node distance=1.5cm] (vb2)  {};
\node[right of = vb2, node distance=0.7cm] (vbp-2)  {};
\node[circle,draw=blue!60, very thick, minimum size=12mm,right of = vbp-2, node distance=1.5cm] (vbp-1)  {\tiny $2p-1$};

\node[circle,draw=blue!60, very thick, minimum size=10mm, right of = vq-1, node distance=2.5cm] (vx0) {\tiny $0$};
\node[circle,draw=blue!60, very thick, minimum size=10mm,right of = vx0, node distance=1.7cm] (vx2)  {\tiny $2$};
\node[right of = vx2, node distance=1.7cm] (vx4)  {};
\node[right of = vx4, node distance=0.7cm] (vxp-4)  {};
\node[circle,draw=blue!60, very thick, minimum size=10mm,right of = vxp-4, node distance=1.7cm] (vxp-2)  {\tiny $p-2$};

\node[circle,draw=blue!60, very thick, minimum size=10mm, below of = vx0, node distance=2cm] (vx1) {\tiny $1$};
\node[circle,draw=blue!60, very thick, minimum size=10mm,right of = vx1, node distance=1.7cm] (vx3)  {\tiny $3$};
\node[right of = vx3, node distance=1.7cm] (vx5)  {};
\node[right of = vx5, node distance=0.7cm] (vxp-3)  {};
\node[circle,draw=blue!60, very thick, minimum size=7mm,right of = vxp-3, node distance=1.7cm] (vxp-1)  {\tiny $p-1$};

every edge quotes/.style = {font=\footnotesize, shorten >=1pt},

\draw[->,black, very thick] (v1) edge  node[pos=0.05, below=0pt] {$0$} (vb0);
\draw[->,black, very thick, shorten <=0.45cm] (v2) edge  node[pos=0.3, below=0pt] {$\varepsilon$} (vb1);
\draw[-,black, very thick, dotted] (vp-2) edge (v2);
\draw[->,black, very thick, shorten >=0.45cm] (vp-1) edge  node[pos=0.7, above=0pt] {$\varepsilon$} (vbp-2);

\draw[->,black, very thick] (vb1) edge  node[pos=0.05, above=0pt] {$\varepsilon$} (v0);
\draw[->,black, very thick, shorten <=0.45cm] (vb2) edge  node[pos=0.3, above=0pt] {$\varepsilon$} (v1);
\draw[-,black, very thick, dotted] (vbp-2) edge (vb2);
\draw[->,black, very thick, shorten >=0.45cm] (vbp-1) edge  node[pos=0.7, below=0pt] {$\varepsilon$} (vp-2);

\draw[->,black, very thick, bend right=10] (vp-1) edge  node[pos=0.5, left=-3pt] {$\varepsilon$} (vbp-1);
\draw[->,black, very thick, bend right=10] (vbp-1) edge  node[pos=0.5, right=-3pt] {$0$} (vp-1);
\draw[->,black, very thick, bend right=30] (vb0) edge  node[pos=0.2, below=0pt] {$p-1$} (vbp-1);
\draw[->,black, very thick, bend left=30] (v0) edge  node[pos=0.2, above=0pt] {$\varepsilon$} (vp-1);
\draw[->,black, very thick, bend left=10] (v0) edge  node[pos=0.5, right=-3pt] {$\varepsilon$} (vb0);
\draw[->,black, very thick, bend left=10] (vb0) edge  node[pos=0.5, left=-3pt] {$p+1$} (v0);

\draw[->,black, very thick] (vx2) edge  node[pos=0.5, above=-2pt] {$\varepsilon$} (vx0);
\draw[->,black, very thick, shorten <=0.3cm] (vx4) edge  node[pos=0.65, above=-2pt] {$\varepsilon$} (vx2);
\draw[-,black, very thick, dotted] (vxp-4) edge (vx4);
\draw[->,black, very thick, shorten >=0.3cm] (vxp-2) edge  node[pos=0.35, above=-2pt] {$\varepsilon$} (vxp-4);

\draw[->,black, very thick] (vx3) edge  node[pos=0.5, below=-2pt] {$\varepsilon$} (vx1);
\draw[->,black, very thick, shorten <=0.3cm] (vx5) edge  node[pos=0.65, below=-2pt] {$\varepsilon$} (vx3);
\draw[-,black, very thick, dotted] (vxp-3) edge (vx5);
\draw[->,black, very thick, shorten >=0.3cm] (vxp-1) edge  node[pos=0.35, below=-2pt] {$\varepsilon$} (vxp-3);

\draw[->,black, very thick, loop right] (vxp-1) edge  node[pos=0.5, above=1pt] {$0$} (vxp-1);
\draw[->,black, very thick, loop left] (vx0) edge  node[pos=0.5, above=1pt] {$2$} (vx0);
\draw[->,black, very thick, bend left=25] (vx0) edge  node[pos=0.2, above=0pt] {$\varepsilon$} (vxp-2);
\draw[->,black, very thick, bend right=25] (vx1) edge  node[pos=0.2, below=0pt] {$0(p-1)0$} (vxp-1);
\draw[->,black, very thick] (vxp-1) edge  node[pos=0.5, right=-3pt] {$0$} (vxp-2);
\draw[->,black, very thick] (vx1) edge  node[pos=0.5, right=-3pt] {$20$} (vx0);

\draw[->,black, very thick] (vx0) edge  node[pos=0.5, above=-2pt,] {$\varepsilon$} (vxp-1);
\draw[->,black, very thick] (vx0.-30) -- (vxp-1.170)  node[pos=0.4, below=-3pt, rotate=-20] {$0(p-1)$};

\end{tikzpicture}

\caption{Prefix graph of the substitution $\zeta'_p$ for $p$ odd (left) and $p$ even (right). }
\label{zetapsgraph}
\end{figure}
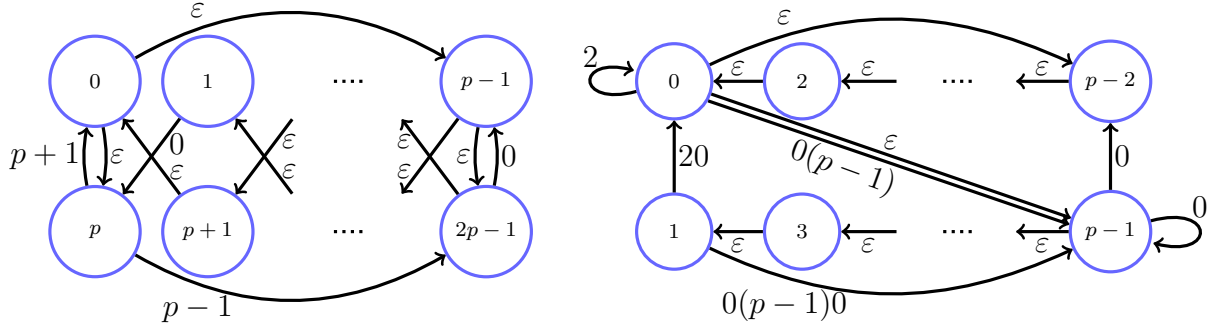

\begin{theorem}\label{iwip3}
Let $\alpha$ be a special Pisot unit such that $\alpha^2=\beta^p$ holds for an integer $p\geq 3$, i.e. $p \in \{3,4,6,8,10\}$.
Then $\zeta'_p$ (as defined in \eqref{setasta} and \eqref{setastb}, respectively) is a primitive unimodular Pisot substitution
with dominant root $\beta$ and 
such that for each $k \in \A$ we have $\R_{\zeta'_p}(k) \cong \Phi(X_k)$ with $X_k$ defined as follows.
For $p \in \{4,6,8,10\}$ we have
\[X_k = \begin{cases}
\bfPhi{\alpha\beta^k}(\Fa), & \text{if } k \in \{0, 2, \ldots, p-2\}; \\
\bfPsi{\beta^k} -\bfPhi{\alpha\beta^k}(\Fa), & \text{if } k \in \{1, 3, \ldots, p-3\}; \\
\bfPsi{\beta^{p-1}} - \bfPhi{\beta^{p-1}}(\Fa), & \text{if } k=p-1.
\end{cases}\]
If $p=3$ we have
\begin{align*}
X_0= & \bfPhi{\alpha}(\Fa),  & X_3= & \bfPsi{\alpha}- \bfPhi{\alpha} \Fa, \\
X_1= & \bfPsi{\beta}-\bfPhi{\alpha\beta}(\Fa), & X_4= & \bfPhi{\alpha\beta} \Fa, \\
X_2= & \bfPhi{\beta^2}(\Fa), & X_5= & \bfPsi{\beta^{2}} -\bfPhi{\beta^2} \Fa.
\end{align*} 
\end{theorem}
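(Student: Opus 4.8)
The plan is to run exactly the argument used for Theorems~\ref{Rauzy},~\ref{iwip1} and~\ref{iwip2}. I will construct a linear isomorphism $\Phi: \RR^d \longrightarrow \mathbb{K}_c$ such that the list $\{\Phi(X_k):k\in\A\}$ satisfies the system of set equations~\eqref{setequations} of the GIFS $(\Gamma_{\zeta'_p},g)$, and then invoke the uniqueness of the invariant set list from Proposition~\ref{GIFSpropo} to conclude $\R_{\zeta'_p}(k)=\Phi(X_k)$ for every $k\in\A$. As a warm-up I would settle the combinatorial bookkeeping: $\zeta'_p$ is primitive because the chain structure in~\eqref{setasta}, resp.~\eqref{setastb}, forces some power of $\zeta'_p$ to send every letter to a word containing all letters; reading $\bfM_{\zeta'_p}$ off column by column one gets $\det\bfM_{\zeta'_p}=\pm1$, and, using $\alpha^2=\beta^p$, one identifies the dominant root of the characteristic polynomial with $\beta$. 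By Table~\ref{dependencies} the hypothesis forces $p=3$ when $p$ is odd, so in that case this is a finite verification; when $p$ is even, $\bfM_{\zeta'_p}$ has a block shape analogous to the incidence matrix of the even case of Theorem~\ref{iwip2}, and the dominant eigenvalue is extracted from that structure in the same way. Since $\alpha$ is a special Pisot unit, $\beta$ is a Pisot unit as well, so $\zeta'_p$ is a primitive unimodular Pisot substitution.

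Next I would set up $\Phi$, with the roles of $\alpha$ and $\beta$ interchanged relative to Theorems~\ref{Rauzy} and~\ref{iwip1}, exactly as in Theorem~\ref{iwip2}. I take $\bm{u}=(1,\beta,\ldots,\beta^{p-1})$, a left eigenvector of $\bfM_{\zeta'_p}$ (repeated once, i.e. $(\bm{u}\,\bm{u})$, in the odd case where $\A$ has $2p$ letters), and let $\bfP$ be the matrix whose columns are $-\bfPsi_\alpha(1),-\bfPsi_\alpha(\beta),\ldots,-\bfPsi_\alpha(\beta^{p-1})$ (this list repeated once more in the odd case). Define $\Phi$ by $\pi(\bfy)=\KPhi{\bfP\,\bfy}$; then $\bfP\,\bfM_{\zeta'_p}=\bfB_\alpha\,\bfP$ in the odd case (resp. $\bfP\,\bfM_{\zeta'_p}=\bfB_\alpha^2\,\bfP$ in the even case) gives $h\circ\Phi=\Phi\circ\bfB_\alpha$ (resp. $h\circ\Phi=\Phi\circ\bfB_\alpha^2$), mirroring the even/odd dichotomy of Theorem~\ref{iwip2}. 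I then read off the prefixes that occur in $\Gamma_{\zeta'_p}$ from Figure~\ref{zetapsgraph} --- these are $\varepsilon,0,2,p+1$ when $p=3$, and $\varepsilon,0,2,20,0(p-1),0(p-1)0$ when $p$ is even --- and rewrite each composition $g_U\circ\Phi$ as $\Phi$ post-composed with an affine map assembled out of $\fL$, $\fR$ and a translation by some $\bfPsi_\alpha(\beta^j)$ or $\bfPsi_\alpha(1)$, using $\bfA_\alpha\bfB_\alpha=\bfA_\alpha+\bfB_\alpha$ together with the identity $\bfA_\alpha^2=\bfB_\alpha^p$ that encodes $\alpha^2=\beta^p$.

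With those formulas available, the remaining task is to check the set equation $\Phi(X_k)=\bigcup_{k\xrightarrow{U}k'}g_U(\Phi(X_{k'}))$ for each $k\in\A$. For every letter whose $\zeta'_p$-image has length one the incoming edge carries the empty prefix, so the equation collapses to $\Phi(X_k)=g_\varepsilon\circ\Phi(X_{k-1})$, which follows at once from the definition of the $X_k$ and the conjugacy relation above; these are the routine cases. The content of the proof lies in the few ``junction'' letters whose $\zeta'_p$-image has length $\geq 2$ --- the letters $0,p-1,p,2p-1$ in the odd case, and $0,1,p-1$ in the even case (where $\zeta'_p(0)$ has length $3$ and $\zeta'_p(p-1)$ has length $4$) --- for which one decomposes $\Fa=\fL(\Fa)\cup\fR(\Fa)$, and, for the longer images, decomposes one of the resulting pieces further via iterated applications of this identity, in order to match the union over the outgoing edges. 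I expect the main obstacle to be purely the bookkeeping of the various translations $\bfPsi_\alpha(\beta^j)$, $\bfPsi_\alpha(1)$ as they propagate across the two interacting chains of letters (linked by the edges $0\leftrightarrow p$ and $p-1\leftrightarrow 2p-1$ in the odd picture, and by the loop and multi-edge configuration at $0$ and $p-1$ in the even picture), together with inserting $\bfA_\alpha^2=\bfB_\alpha^p$ at precisely the step that bridges the $\fL$- and $\fR$-branches; the even case needs in addition the dominant-eigenvalue analysis of the structured incidence matrix mentioned above. Once all $\abs{\A}$ equations are verified, Proposition~\ref{GIFSpropo} yields the claim.
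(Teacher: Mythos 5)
Your overall strategy is exactly the paper's: build $\Phi$ from a left eigenvector of $\bfM_{\zeta'_p}$, verify the set equations of the GIFS $(\Gamma_{\zeta'_p},g)$ for the list $\{\Phi(X_k)\}$, and invoke the uniqueness in Proposition~\ref{GIFSpropo}. The identification of the occurring prefixes, the role of $\bfA_\alpha^2=\bfB_\alpha^p$, and the even/odd dichotomy for the action of $h$ (multiplication by $\bfB_\alpha^2$ resp.\ $\bfB_\alpha$) are all as in the paper.

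However, there is a concrete wrong step that breaks the construction: the vector $\bm{u}=(1,\beta,\ldots,\beta^{p-1})$ you propose is \emph{not} a left eigenvector of $\bfM_{\zeta'_p}$. You have transplanted the eigenvector of $\theta'_q$ from Theorem~\ref{iwip2}, but $\zeta'_p$ has a different structure at the letter $0$ (its image has length $2$, resp.\ $3$, rather than $1$), which changes the first entry. For even $p$ one has $\bfM_{\zeta'_p}=\bfM_{\zeta_p}^2$ with $\zeta_p$ as in Theorem~\ref{Rauzy} applied to the pair $(\beta,\alpha)$, so the dominant eigenvalue is $\beta^2$ and the correct left eigenvector is $(\alpha,\beta,\beta^2,\ldots,\beta^{p-1})$; for $p=3$ it is $(\alpha,\beta,\beta^2,\alpha,\beta,\beta^2)$ with eigenvalue $\beta$. (A quick check for $p=3$: the coordinate equation at the letter $0$ reads $v_3+v_4=\beta v_0$, and $\alpha+\beta=\alpha\beta$ holds, whereas $1+\beta=\beta$ does not.) Since the eigenspace is one-dimensional, your $\bm{u}$ cannot be rescued by normalisation. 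Consequently your matrix $\bfP$ has $-\bfPsi_\alpha(1)$ instead of $-\bfPsi_\alpha(\alpha)$ in the relevant columns, the intertwining relation $\bfP\,\bfM_{\zeta'_p}=\bfB_\alpha^2\,\bfP$ (resp.\ $\bfB_\alpha\,\bfP$) fails, and with it the identity $h\circ\Phi=\Phi\circ\bfB_\alpha^2$ on which every subsequent computation of $g_U\circ\Phi$ rests; indeed $\Phi$ is then not even well defined by $\pi(\bfy)=\KPhi{\bfP\bfy}$, because the rows of your $\bfP$ are no longer left eigenvectors and so do not annihilate $\mathbb{K}_e\oplus\mathbb{K}_s=\ker\pi$. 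Note also that the stated sets $X_k$ (with $X_0=\bfA_\alpha\Fa$ and translations $\bfPsi_\alpha(\beta^k)$) are calibrated to the correct $\bfP$; with yours the set equations would not close up. Once the eigenvector is corrected, the rest of your plan goes through as in the paper.
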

\begin{proof}
The strategy of the proof is the same as in the previous theorems.
We start with the case that $p$ is even. 
Obviously $\zeta'_p$ is a primitive substitution and $\bfM_{\zeta'_p} = \bfM_{\zeta_p}^2$ where $\zeta_p$ is the substitution discussed in Theorem~\ref{Rauzy}. 
From this we see that $\beta^2$ is the dominant root of $\bfM_{\zeta'_p}$ and   $\bfnu  = \left( \alpha, \beta, \beta^2 , \ldots, \beta^{p-1} \right)$ is a corresponding  left eigenvector. 
We define our isomorphism
$\Phi:\KK_\alpha \longmapsto \KK_{\zeta'_p}$ by the condition $\pi(x)=-\Phi\circ\bfPsi{x}$ for all $x \in \QQ(\alpha)$ and observe that it commutes
the action of $h$ with the action of $\bfPhi{\beta}^2$. With this we have
\[g_U \circ \Phi(\bfx)=  \Phi\left(-\bfPsi{\spk{\bfnu,\bfell(U)}}+\bfPhi{\beta^2}(\bfx)\right),\]
where $U \in \{\varepsilon, 0, 2, 20, 0(p-1), 0(p-1)0\}$ is one of the occurring prefixes. 

Again, we verify the $p$ set equations induced by the GIFS $(\Gamma_{\zeta'_p}, g)$. For $k \in \{2, 3, \ldots, p-2\}$ the calculations are trivial.
For $k=0$ we see that
\begin{align*}
\Phi(X_0)= & \Phi\circ\bfPhi{\alpha}(\Fa) = \Phi\circ\bfPhi{\alpha}\circ \fL(\Fa) \cup 
\Phi\circ\bfPhi{\alpha}\circ \fR\circ \fL(\Fa) \cup 
\Phi\circ\bfPhi{\alpha}\circ \fR^2(\Fa) \\
= &
\Phi\circ\bfPhi{\alpha^2}(\Fa) \cup \Phi\big(\bfPsi{\alpha\beta} - \bfPhi{\alpha^2\beta}(\Fa)\big) \cup \Phi\big(\bfPsi{\alpha\beta-\alpha\beta^2} + \bfPhi{\alpha\beta^2}(\Fa)\big) \\
= & g_\varepsilon\circ\Phi\circ\bfPhi{\beta^{p-2}}\circ\fL(\Fa) \cup 
g_\varepsilon\circ\Phi\circ\bfPhi{\beta^{p-2}}\circ\fR(\Fa) \\
&\cup 
\Phi\big(\bfPsi{-\alpha-\beta^{p-1}+\beta^{p+1}} - \bfPhi{\alpha^2\beta}(\Fa)\big)
\cup \Phi\big(\bfPsi{-\beta^2} + \bfPhi{\alpha\beta^2}(\Fa)\big) \\
= &  g_{\varepsilon} \circ \Phi(X_{p-2}) \cup  g_\varepsilon \circ \Phi(X_{p-1}) \cup  g_{0(p-1)} \circ \Phi(X_{p-1}) \cup g_2 \circ \Phi(X_0). 
\end{align*}

For $k=1$ we have
\[\Phi(X_1)= \Phi\big(\bfPsi{\beta} - \bfPhi{\alpha\beta} (\Fa)\big) = \Phi\big(\bfPsi{\beta} - \bfPhi{\alpha\beta}  \left(\fR(\Fa) \cup  \fL(\Fa)\right)\big).\]
Now observe that $\alpha^{-1}+\beta^{-1}=1$ we immediately yields $\beta^p=\alpha+\beta^{-1}$. 
From the characteristic polynomial of $M_{\zeta'_p}$ we obtain
$\beta=\beta^{p+1}-2\beta^p+\beta^{p-1}$. Insertion of the observation from above gives $\beta=-2\alpha-2\beta^{p-1}+\beta^{p+1}$.
We use this in our calculations and get
\begin{align*}
\Phi(X_1)&= \Phi\big(\bfPsi{-\alpha-\beta^2} +\bfPhi{\alpha\beta^2}(\Fa)\big) \cup  \Phi\big(\bfPsi{-2\alpha-\beta^{p-1}+\beta^{p+1}} -\bfPhi{\beta^{p+1}}(\Fa)\big)\\
&= g_{20} \circ \Phi(X_0) \cup g_{0(p-1)0} \circ \Phi(X_{p-1}).
\end{align*}

Finally, for the vertex $k=p-1$ we obtain
\begin{align*}
\Phi(X_{p-1}) = & \Phi\big(\bfPsi{\beta^{p-1}} - \bfPhi{\beta^{p-1}}\left(\fL( \Fa) \cup \fR(\Fa)\right)\big) \\
= & \Phi\big(\bfPsi{\beta^{p-1}} - \bfPhi{\alpha\beta^{p-1}}( \Fa)\big) 
\cup  \Phi\big(\bfPsi{\beta^{p-1}-\beta^p} + \bfPhi{\beta^{p}}( \Fa)\big) \\
= & g_\varepsilon \circ \Phi(X_{p-3}) \cup g_0 \circ \Phi\circ\bfPhi{\alpha\beta^{p-2}}\left(\fL(\Fa) \cup \fR(\Fa)\right) \\
= & g_\varepsilon \circ \Phi(X_{p-3}) \cup g_0 \circ \Phi(X_{p-2}) \cup  g_{0} \circ \Phi(X_{p-1}).
\end{align*}

Now lets turn to the case $p=3$. One easily verifies that  $\beta$ is the dominant root of $\bfM_{\zeta_3}$
and $\bfnu=(\alpha, \beta, \beta^2, \alpha, \beta, \beta^2)$ is a corresponding left eigenvector.
We thus define our isomorphism
$\Phi:\KK_\alpha \longmapsto \KK_{\zeta'_3}$ by the condition $\pi(x)=-\Phi\circ\bfPsi{x}$ for all $x \in \QQ(\alpha)$ and obtain
\[g_U \circ \Phi(\bfx)=  \Phi\left(-\bfPsi{\spk{\bfnu,\bfell(U)}}+\bfPhi{\beta}(\bfx)\right),\]
where $U \in \{\varepsilon,0,2,4\}$ is one of the occurring prefixes.

We verify the system of set equations~\eqref{setequations}
induced by the GIFS $(\Gamma_{\zeta'_3}, g)$ for the set list $\{X_0,\ldots, X_5\}$.
For $X_4$ the calculation is trivial. For the remaining sets we easily obtain
\begin{align*}
\Phi{X_0} & =  \Phi\big(\bfPhi{\alpha}\left(\fL(\Fa) \cup \fR(\Fa)\right)\big) = 
\Phi\big(\bfPhi{\alpha^2}(\Fa) \big) \cup \Phi\big(\bfPsi{\alpha\beta} - \bfPhi{\alpha\beta}(\Fa) \big)\\
& =  g_\varepsilon \circ \Phi(X_{2}) \cup  g_\varepsilon \circ \Phi(X_{3}), \\
\Phi(X_1)&=  \Psi\big(\bfPsi{\beta} -\bfPhi{\alpha\beta}(\Fa)\big) =
\big(\bfPsi{-\alpha+\alpha\beta} -\bfPhi{\alpha\beta}(\Fa)\big) =g_{0} \circ \Phi(X_{3}), \\
\Phi(X_{2})  & = \Psi\big(\bfPhi{\beta^2}(\Fa)\big) = g_\varepsilon \circ \Psi\big(\bfPhi{\beta}\left(\fL(\Fa)\cup\fR(\Fa)\right)\big) \\
& = g_\varepsilon \circ \Phi(X_{4}) \cup  g_\varepsilon \circ \Phi(X_{5}), \\
\Phi(X_{3})& = \Phi\big(\bfPsi{\alpha} - \bfPhi{\alpha}\left(\fR(\Fa) \cup \fL(\Fa) \right)\big) \\
&=\Phi\big(\bfPsi{\alpha-\alpha\beta} + \bfPhi{\alpha\beta}(\Fa)\big) \cup \Phi\big(\bfPsi{\alpha} - \bfPhi{\alpha^2}(\Fa)\big) \\
&=\Phi\big(\bfPsi{-\beta} + \bfPhi{\alpha\beta}(\Fa)\big) \cup \Phi\big(\bfPsi{-\beta^2+\beta^3} - \bfPhi{\beta^3}(\Fa)\big) \\
&= g_{4} \circ \Phi(X_{0}) \cup g_{2} \circ \Phi(X_{5}),\\
\Phi(X_{5}) &=  \Phi\big(\bfPsi{\beta^{2}} - \bfPhi{\beta^{2}}\left(\fL(\Fa) \cup \fR(\Fa) \right)\big) \\ 
&=  
\Phi\big(\bfPsi{\beta^{2}} - \bfPhi{\alpha\beta^{2}}(\Fa)\big) \cup \Phi\big(\bfPsi{\beta^{2}-\beta^3} + \bfPhi{\beta^{3}}(\Fa)\big) \\
&= g_{\varepsilon} \circ \Phi(X_{1}) \cup   g_0 \circ \Phi(X_{2}).
\end{align*}

\end{proof}

\begin{remark}\label{remark3}
Consider the  automorphism of the free group generated by $\{0, \ldots, p-1\}$
\[\eta'_p : \begin{cases}
0 \mapsto 1^{-1}0^{-1}, 1 \mapsto 2^{-1},\ldots, (p-2) \mapsto (p-1)^{-1}, (p-1) \mapsto 0(p-1)^{-1}, & \text{if $p$ is odd}; \\
0 \mapsto 1^{-1}0^{-1}, 1 \mapsto 2^{-1},\ldots, (p-2) \mapsto (p-1)^{-1}, (p-1) \mapsto (p-1)^{-1}0^{-1}, & \text{if $p$ is even}.
\end{cases}
\]
The substitution $\zeta'_p$ is the double substitution of $\eta'_p$. If $p$ is even then we have
$\zeta'_p= \eta'_p \circ \eta'_p$.
\end{remark}

\end{section}

\begin{section}{Proofs of the main results}\label{sec:tiling}

In the present section we prove our main results stated and announced in the introduction for higher dimensional tent-tiles (the case $d=1$ was already settled in Theorem~\ref{MT2-2}). At first we show the statement on the positive $d$-dimensional Lebesgue measure.
\begin{proof}[Proof of Theorem~\ref{positivemeasure}]
Due to the results in Section~\ref{sec:iwip} we can find for each special Pisot unit $\alpha$ a substitution $\zeta$  such
that the tent-tile $\Fa$ coincides - up to a linear isomorphism - with the associated Rauzy fractal $\R_\zeta$ or 
is contained in the invariant set list of the respective GIFS. Therefore, the statement follows immediately from
Proposition~\ref{Rauzypositivemeasure}.
\end{proof}

The results concerning the dimension of the boundary and the induced lattice tilings
will be discussed separately for each special Pisot unit. In these proofs the
eigenvalues of the adjacency matrices of the self-replicated boundary graph ($\mu_{\rm sr}$) and (in some cases) the lattice boundary graph ($\mu_{\rm lat}$)  for specific substitutions play an important role.
We will only state these  numbers and the number of vertices but we will not present the entire graphs
since this would go beyond the scope of the article.
Instead, we refer to \cite[Section~5.2]{Siegel-Thuswaldner.2009} where we find a detailed description how to 
algorithmically construct these graphs for any given substitution. From this instruction our stated results can easily be reproduced.
We also want to remark that in all the cases where we determine both the self-replicated boundary graph  and  the lattice boundary graph, the 
two graphs have a very similar shape, especially, we always have $\mu_{\rm lat} = \mu_{\rm sr}$. In fact, there 
does not seem to exist an example of a substitution (that satisfies \eqref{qmc}) such that $\mu_{\rm lat} \not= \mu_{\rm sr}$.

We performed the computations in Mathematica~\cite{Mathematica12.2}, the respective notebook files are available for download \cite{HPSurer}. There, the interested reader also finds more details concerning the calculated boundary graphs.

\subsection{Two dimensional tent-tiles}

The planar tent-tiles are associated with the six special Pisot numbers of algebraic degree $d+1=3$. They are depicted in Figure~\ref{planarcases} and one easily verifies that for each of them the respective Galois conjugates form a pair of complex conjugate numbers, that is  $r=0$ and $s=1$. Especially, the tent-tiles are contained in the complex plane.
\begin{remark}
For the production of the figures that show tent-tiles (Figure~\ref{planarcases} and Figure~\ref{tilings}) we chose the conjugates such that $\Im(\bfPsi{\alpha})>0$. 
Observe that this implies that $\bfPsi{\beta}$ has a negative imaginary part.
\end{remark}

\begin{theorem}\label{tiling1}
Let $\alpha=\alpha_1$, $\beta=\beta(\alpha)$ and
\[\Xi:=\{u_1\bfPsi{\beta-\alpha} + u_2\bfPsi{\beta-\alpha^2} : u_1, u_2 \in \ZZ\}.\]
Then
\[\dim_H(\partial \Fa) = \frac{2\log(\mu_{\rm sr})}{\log(\alpha)}
\approx 1.10026,\]
where $\mu_{\rm sr}\approx 1.3626$ is the dominant root of $t^5-2t^3+t-1$, and the collection $\{\bfx+\Fa : \bfx \in \Xi\}$ provides a proper tiling of the  space $\KK_\alpha=\CC$.
\end{theorem}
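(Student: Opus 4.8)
The plan is to transfer the problem to the Rauzy fractal $\R_{\zeta_3}$. Since $\alpha_1^3 = \beta^2$ (Table~\ref{dependencies}), Theorem~\ref{Rauzy} applies with $p = 3$: the substitution $\zeta_3\colon 0\mapsto 10,\ 1\mapsto 2,\ 2\mapsto 20$ is a primitive unimodular Pisot substitution with dominant root $\lambda_0 = \alpha$, and there is a linear isomorphism $\Phi\colon \RR^2\to\mathbb{K}_c$ with $\R_{\zeta_3}(k) = \Phi(X_k)$ for $k\in\{0,1,2\}$; since $\fL(\Fa)\cup\fR(\Fa) = \Fa$ forces $\Fa = X_0\cup X_1\cup X_2$, this gives $\R_{\zeta_3} = \Phi(\Fa)$. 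The substitution $\zeta_3$ is irreducible (three letters, $\lambda_0$ of degree $3$) and satisfies the strong coincidence condition, so $\R_{\zeta_3}(0),\R_{\zeta_3}(1),\R_{\zeta_3}(2)$ have pairwise disjoint interiors; and since $\alpha$ is a cubic unit with complex-conjugate Galois conjugates, $\abs{\lambda_1} = \abs{\lambda_2} = \abs{\lambda_d} = \alpha^{-1/2}$. Finally, $\Phi$ is a linear bijection, hence a bi-Lipschitz homeomorphism, so it preserves topological boundaries and Hausdorff dimension.

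For the dimension formula I would compute the self-replicating boundary graph $\Gamma_{\zeta_3}^{({\rm sr})}$ by the procedure behind Definition~\ref{def:srsG} (see \cite[Section~5.2]{Siegel-Thuswaldner.2009}); the outcome is that its dominant eigenvalue $\mu_{\rm sr}$ is the dominant root of $t^5 - 2t^3 + t - 1\approx 1.3626 < \alpha$. As $\mu_{\rm sr} < \lambda_0$, Corollary~\ref{Hdim} yields a letter $a$ with
\[
\dim_H(\partial\R_{\zeta_3}(a)) = d + \frac{\log\lambda_0 - \log\mu_{\rm sr}}{\log\abs{\lambda_d}} = 2 + \frac{\log\alpha - \log\mu_{\rm sr}}{-\frac{1}{2}\log\alpha} = \frac{2\log\mu_{\rm sr}}{\log\alpha}\approx 1.10026,
\]
the middle equality using $\abs{\lambda_1} = \abs{\lambda_d}$; moreover the argument of Proposition~\ref{Bdim} bounds $\dim_B(\partial\R_{\zeta_3}(b))$ by the same value for every $b$. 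Because $\partial\R_{\zeta_3}\subseteq\bigcup_b\partial\R_{\zeta_3}(b)$, this gives the upper bound $\dim_H(\partial\Fa)\le \frac{2\log\mu_{\rm sr}}{\log\alpha}$. For the reverse inequality I would use the tiling part of the statement: once the lattice tiling $\{\bfxi + \R_{\zeta_3}:\bfxi\in\Xi_{\rm lat}\}$ is proper, local finiteness gives $\partial\R_{\zeta_3} = \bigcup_{\bfxi\in\Xi_{\rm lat}\setminus\{\bm{0}\}}(\R_{\zeta_3}\cap(\bfxi + \R_{\zeta_3}))$, and this is the invariant set list of the GIFS $(\Gamma_{\zeta_3}^{({\rm lat})}, g')$ (the lattice counterpart of Lemma~\ref{limm1}, \cite[Section~5.2]{Siegel-Thuswaldner.2009}); running the argument of Proposition~\ref{Bdim} on $\Gamma_{\zeta_3}^{({\rm lat})}$ (again with $\abs{\lambda_1} = \abs{\lambda_d}$) then gives $\dim_H(\partial\Fa) = \frac{2\log\mu_{\rm lat}}{\log\alpha}$, and a direct computation of $\Gamma_{\zeta_3}^{({\rm lat})}$ shows $\mu_{\rm lat}$ is again the dominant root of $t^5 - 2t^3 + t - 1$, so $\mu_{\rm lat} = \mu_{\rm sr}$.

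For the tiling statement, irreducibility of $\zeta_3$ makes Condition~\eqref{qmc} automatic, and the disjoint-interior property is in hand, so Proposition~\ref{Rauzytiling} (with $a_1,a_2,a_3 = 0,1,2$) shows $\{\bfxi + \R_{\zeta_3}:\bfxi\in\Xi_{\rm lat}\}$ tiles $\mathbb{K}_c$, where $\Xi_{\rm lat}$ is the lattice generated by $\pi(\bfell(0) - \bfell(2))$ and $\pi(\bfell(1) - \bfell(2))$. Using $\pi(\bfell(k)) = \Phi(\bfP\,\bfell(k))$ with $\bfP = (-\bfPsi_\alpha(\beta),\ -\bfPsi_\alpha(\alpha),\ -\bfPsi_\alpha(\alpha^2))$ from the proof of Theorem~\ref{Rauzy}, these generators are $-\Phi(\bfPsi_\alpha(\beta - \alpha^2))$ and $\Phi(\bfPsi_\alpha(\alpha^2 - \alpha)) = \Phi(\bfPsi_\alpha(\beta - \alpha)) - \Phi(\bfPsi_\alpha(\beta - \alpha^2))$; since $(\beta - \alpha^2,\ \alpha^2 - \alpha)$ arises from $(\beta - \alpha,\ \beta - \alpha^2)$ by a $\ZZ$-unimodular change of basis and $\bfPsi_\alpha,\Phi$ are $\ZZ$-linear, it follows that $\Phi^{-1}(\Xi_{\rm lat}) = \Xi$. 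Hence $\{\bfx + \Fa:\bfx\in\Xi\}$ tiles $\RR^2$, and it is proper by Proposition~\ref{tilingproperty}, since $\mu_{\rm lat} = \mu_{\rm sr}\approx 1.3626 < \alpha = \lambda_0$.

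The principal obstacle is the two explicit boundary-graph computations: building the finite graphs $\Gamma_{\zeta_3}^{({\rm sr})}$ and $\Gamma_{\zeta_3}^{({\rm lat})}$ according to Definition~\ref{def:srsG} and verifying that each has dominant eigenvalue equal to the dominant root of $t^5 - 2t^3 + t - 1$ --- finite but bulky combinatorial verifications. A subtler point is the lower bound for $\dim_H(\partial\Fa)$: the self-replicating graph governs only the boundaries of the individual pieces $\R_{\zeta_3}(a)$ and so yields merely an upper bound for the boundary of the whole fractal, so the proper lattice tiling (and hence the tiling part of the theorem) together with the lattice boundary graph are genuinely needed to pin the dimension down from below.
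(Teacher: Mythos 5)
Your proposal is correct in substance and reaches the right conclusions, but for the Hausdorff-dimension lower bound it takes a genuinely different and heavier route than the paper, and in doing so it misses the structural shortcut the paper exploits. By Theorem~\ref{Rauzy} each individual subtile $\R_{\zeta_3}(k)=\Phi(X_k)$ with $X_k=\fL^k\circ\fR(\Fa)$ or $X_2=\fL^{2}(\Fa)$, i.e.\ each piece of the Rauzy fractal is an image of the \emph{whole} tent-tile under an invertible affine map; hence $\dim_H(\partial\R_{\zeta_3}(a))=\dim_H(\partial\Fa)$ for \emph{every} letter $a$, and Corollary~\ref{Hdim} (with $\abs{\lambda_1}=\abs{\lambda_d}=\alpha^{-1/2}$, so equality holds) already gives the exact value of $\dim_H(\partial\R_{\zeta_3}(a))$ for some $a$. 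This settles both bounds at once from the self-replicating boundary graph alone; the lattice boundary graph, the "lattice counterpart of Lemma~\ref{limm1}", and the decomposition $\partial\R_{\zeta_3}=\bigcup_{\bfxi\neq\bm{0}}(\R_{\zeta_3}\cap(\bfxi+\R_{\zeta_3}))$ are not needed. Your closing remark that "the proper lattice tiling \ldots is genuinely needed to pin the dimension down from below" is therefore not accurate for this theorem, although your route is a legitimate one (it is essentially the Siegel--Thuswaldner lattice-tiling argument) and would be the right fallback if the pieces were not affine copies of $\Fa$. For the tiling part the two arguments also diverge slightly: the paper verifies properness from $\mu_{\rm sr}<\lambda_0$ via Theorem~\ref{tilingproperty} and the equivalence of self-replicating and lattice tiling properness for irreducible substitutions, so only one boundary graph must be computed, whereas you compute $\Gamma_{\zeta_3}^{({\rm lat})}$ separately and apply Proposition~\ref{tilingproperty} to $\mu_{\rm lat}$ directly. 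Your explicit identification $\Phi^{-1}(\Xi_{\rm lat})=\Xi$ via the unimodular change of basis $(\beta-\alpha^2,\alpha^2-\alpha)\mapsto(\beta-\alpha,\beta-\alpha^2)$ is a detail the paper leaves implicit and is worth having spelled out.
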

\begin{proof}
We consider the substitution $\zeta_3$ defined as in \eqref{zetap}.  The substitution is irreducible and satisfies the strong coincidence condition. 
The dominant roots of $\bfM_{\zeta_3}$ is given by $\lambda=\alpha \approx 1.75488$, the other roots form a complex conjugate pair $\lambda^{(1)}$, $\overline{\lambda^{(1)}}$ and we have
$\abs{\lambda^{(1)}}= \lambda^{-\nicefrac{1}{2}}$ since $\lambda$ is an algebraic unit. We algorithmically compute the self-replicating boundary graph $\Gamma^{(\rm sr)}_{\zeta_1}$ (depicted in Figure~\ref{graph}) and calculate the
dominant eigenvalue of the adjacency matrix $\mu_{\rm sr} \approx 1.3626$,
which is the dominant root of $t^5-2t^3+t-1$. For each $a \in \{0, \ldots, 9\}$ the set  
$\R_{\zeta_{3}}(a)$ is an affine image of $\Fa$ by Theorem~\ref{Rauzy}.
Therefore, 
$\dim_B(\partial\R_{\zeta_{3}}(a)) = \dim_B(\partial\Fa)$ and the statement on the Hausdorff dimension follows immediately from Corollary~\ref{Hdim}.

As $\zeta_{3}$ is irreducible, $\R_{\zeta_{3}}$ induces a lattice multi-tiling with respect to the lattice $\Xi_{\rm lat}$ which we may assume to be proper by the Pisot conjecture. For a confirmation we observe that $\mu_{\rm sr} < \lambda$. By
Theorem~\ref{tilingproperty}  this implies that the  self-replicating multi-tiling  is a proper tiling and since $\zeta_3$ is irreducible we
have that the lattice multi-tiling is also proper. 
Now, Theorem~\ref{Rauzy} immediately implies that $\Fa$ provides a proper tiling with respect to the lattice $\Xi$. 
\end{proof}

The left hand side of Figure~\ref{tilings} shows the structure of the tiling induced by the tent-tile $\F_{\alpha_1}$.
\begin{figure}[h]
\begin{minipage}[t]{0.4\textwidth}
\vspace{0cm}
  \includegraphics[width=0.68\textwidth]{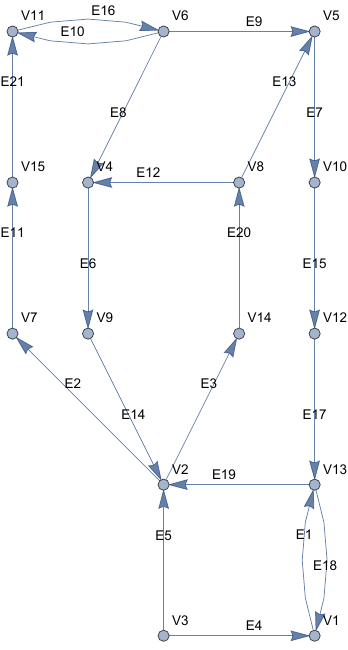}
\end{minipage}
\hfill
\begin{minipage}[t]{0.3\textwidth}
\vspace{0cm}
{\tiny
\begin{tabular}{l@{\qquad}l}
\multicolumn{2}{c}{Vertex List}\\
\hline
 V1& $[1 , \pi (0) , 2]$ \\
\hline
 V2& $[1 , \pi (0) , 3]$ \\
\hline
 V3& $[2 , \pi (0) , 3]$ \\
\hline
 V4& $[2 , \pi (\alpha ^2-2 \alpha +1) , 1]$ \\
\hline
 V5& $[3 , \pi (\alpha ^2-2 \alpha +1) , 1]$ \\
\hline
 V6& $[1 , \pi (\alpha -1) , 3]$ \\
\hline
 V7& $[3 , \pi (1) , 1]$ \\
\hline
 V8& $[3 , \pi (1) , 2]$ \\
\hline
 V9& $[1 , \pi (\alpha ^2-\alpha ) , 1]$ \\
\hline
 V10& $[2 , \pi (\alpha ^2-\alpha ) , 1]$ \\
\hline
 V11& $[2 , \pi (\alpha ^2-\alpha ) , 3]$ \\
\hline
 V12& $[1 , \pi (\alpha ) , 1]$ \\
\hline
 V13& $[1 , \pi (\alpha ) , 3]$ \\
\hline
 V14& $[3 , \pi (\alpha ) , 3]$ \\
\hline
 V15& $[3 , \pi (\alpha ^2-\alpha +1) , 3]$ \\
\hline
\end{tabular}}
\end{minipage}
\hfill
\begin{minipage}[t]{0.2\textwidth}
\vspace{0cm}
{\tiny
\begin{tabular}{l@{\qquad}l}
\multicolumn{2}{c}{Edge List}\\
\hline
E1 & $\pi (0)$ \\
\hline
E2 & $\pi (0)$ \\
\hline
E3 & $\pi (0)$ \\
\hline
E4 & $\pi (0)$ \\
\hline
E5 & $\pi (0)$ \\
\hline
E6 & $\pi (0)$ \\
\hline
E7 & $\pi (0)$ \\
\hline
E8 & $\pi (\alpha -1)$ \\
\hline
E9 & $\pi (\alpha -1)$ \\
\hline
E10 & $\pi (\alpha -1)$ \\
\hline
E11 & $\pi (0)$ \\
\hline
E12 & $\pi (0)$ \\
\hline
E13 & $\pi (0)$ \\
\hline
E14 & $\pi (\alpha ^2)$ \\
\hline
E15 & $\pi (0)$ \\
\hline
E16 & $\pi (0)$ \\
\hline
E17 & $\pi (\alpha )$ \\
\hline
E18 & $\pi (\alpha )$ \\
\hline
E19 & $\pi (\alpha )$ \\
\hline
E20 & $\pi (0)$ \\
\hline
E21 & $\pi (0)$ \\
\hline
\end{tabular}}
\end{minipage}
\caption{The self-replicating boundary graph
$\Gamma_{\zeta_3}^{({\rm sr})}$ has $15$ vertices. Note that the 
lattice boundary graph $\Gamma_{\zeta_3}^{({\rm lat})}$ corresponds to the subgraph 
obtained by removing the vertex V3 (and the edges E4 and E5).}
\label{graph}
\end{figure}

\begin{theorem}\label{tiling3}
Let $\alpha=\alpha_{3}$ and
\[\Xi:=\left\{u_1\bfPsi{1-\alpha} + u_2\bfPsi{1-\alpha^2} : u_1, u_2 \in \ZZ\right\}.\]
Then 
\[\dim_H(\partial \Fa) = \frac{2\log(\mu_{\rm sr})}{\log(\alpha)}
\approx 1.02952,\]
where $\mu_{\rm sr} \approx 1.21746$ is the dominant root of $t^7-2t^2-1$, and the collection $\{\bfx+\Fa : \bfx \in \Xi\} \cup \{\bfx+(\bfPsi{1}-\Fa) : \bfx \in \Xi\}$ provides a proper tiling of the space $\KK_\alpha=\CC$.
\end{theorem}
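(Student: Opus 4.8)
The plan is to route everything through the substitution $\theta_3$ of Theorem~\ref{iwip1}. Since $\alpha=\alpha_3$ satisfies $\alpha^3=\beta$ (Table~\ref{dependencies}), that theorem applies with $q=3$: $\theta_3$ is a primitive unimodular Pisot substitution over $\A=\{0,\ldots,5\}$ with dominant root $\alpha$, and there is a linear isomorphism $\Phi\colon\RR^2\to\mathbb{K}_c$ with $\R_{\theta_3}(k)=\Phi(X_k)$ for the sets $X_k$ listed there. Each $X_k$ is an affine (hence bi-Lipschitz) image of $\Fa$, namely a translate of $\Fa$ or of its reflection, because $\fL$ and $\fR$ have invertible linear parts $\bfA_\alpha$ and $-\bfB_\alpha$. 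Moreover $\theta_3$ satisfies the strong coincidence condition ($\theta_3^6(k)$ starts with the letter $5$ for every $k$), so by Lemma~\ref{coinc} the sets $\R_{\theta_3}(0),\ldots,\R_{\theta_3}(5)$ have pairwise disjoint interiors, and the conjugates of $\alpha$ form a complex conjugate pair with $\abs{\lambda_1}=\abs{\lambda_2}=\alpha^{-1/2}$.

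For the Hausdorff dimension I would compute the self-replicated boundary graph $\Gamma_{\theta_3}^{({\rm sr})}$ via the algorithm in \cite[Section~5.2]{Siegel-Thuswaldner.2009} and read off its dominant eigenvalue $\mu_{\rm sr}\approx1.21746$, the dominant root of $t^7-2t^2-1$; in particular $\mu_{\rm sr}<\lambda_0=\alpha$. As $\abs{\lambda_1}=\abs{\lambda_d}$, Corollary~\ref{Hdim} gives equality, so for some $a\in\A$ one has $\dim_H(\partial\R_{\theta_3}(a))=2+\frac{\log\alpha-\log\mu_{\rm sr}}{\log(\alpha^{-1/2})}=\frac{2\log\mu_{\rm sr}}{\log\alpha}$. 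Since $\R_{\theta_3}(a)$ is an affine image of $\Fa$ this forces $\dim_H(\partial\Fa)=\frac{2\log\mu_{\rm sr}}{\log\alpha}\approx1.02952$.

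Since $\theta_3$ is reducible ($m=5\neq2=d$), the quotient map condition~\eqref{qmc} must be checked. With $\pi=\Phi\circ\bfP$ and $\bfP=(-\bfPsi_\alpha(1),-\bfPsi_\alpha(\alpha),-\bfPsi_\alpha(\alpha^2),-\bfPsi_\alpha(1),-\bfPsi_\alpha(\alpha),-\bfPsi_\alpha(\alpha^2))$ one observes that $\bfP\bfell(a)$ depends only on $a\bmod 3$; taking $(a_1,a_2,a_3)=(1,2,0)$ then makes~\eqref{qmc} immediate and gives $\Xi_{\rm lat}=\Phi(\Xi)$, where $\bfPsi_\alpha(1-\alpha)$ and $\bfPsi_\alpha(1-\alpha^2)$ are $\RR$-linearly independent because $\lambda_1\notin\RR$. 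As the $\R_{\theta_3}(a)$ have disjoint interiors, Proposition~\ref{Rauzytiling} shows $\{\bfxi'+\R_{\theta_3}:\bfxi'\in\Xi_{\rm lat}\}$ tiles $\mathbb{K}_c$; computing the lattice boundary graph $\Gamma_{\theta_3}^{({\rm lat})}$ (which has $\mu_{\rm lat}=\mu_{\rm sr}<\alpha$) and applying Proposition~\ref{tilingproperty} shows the tiling is proper.

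It remains to transport this to $\Fa$. Applying $\Phi^{-1}$, the collection $\{\bfxi+(X_0\cup\cdots\cup X_5):\bfxi\in\Xi\}$ is a proper tiling of $\RR^2$. From $\Fa=\fL(\Fa)\cup\fR(\Fa)$ we get $\Fa=\fL^2(\Fa)\cup\fL\circ\fR(\Fa)\cup\fR(\Fa)=X_5\cup X_4\cup X_3$, whence $\bfPsi_\alpha(1)-\Fa=(\bfPsi_\alpha(1)-X_3)\cup(\bfPsi_\alpha(1)-X_4)\cup(\bfPsi_\alpha(1)-X_5)$ is a union of the reflections of $X_3,X_4,X_5$, so its three pieces have pairwise disjoint interiors; moreover $\bfPsi_\alpha(1)-X_3=X_0$, $\bfPsi_\alpha(1)-X_4=X_1+\bfPsi_\alpha(1-\alpha)$ and $\bfPsi_\alpha(1)-X_5=X_2+\bfPsi_\alpha(1-\alpha^2)$. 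Since $\bfPsi_\alpha(1-\alpha),\bfPsi_\alpha(1-\alpha^2)\in\Xi$, reindexing the translation parameter in the $X_1$- and $X_2$-families gives $\bigcup_{\bfxi\in\Xi}(\bfxi+(X_0\cup X_1\cup X_2))=\bigcup_{\bfxi\in\Xi}(\bfxi+(\bfPsi_\alpha(1)-\Fa))$; refining the tiling above into the pieces $\bfxi+\Fa,\bfxi+X_0,\bfxi+X_1,\bfxi+X_2$ and then merging the three reflection pieces back after this reindexing, we conclude that $\{\bfxi+\Fa:\bfxi\in\Xi\}\cup\{\bfxi+(\bfPsi_\alpha(1)-\Fa):\bfxi\in\Xi\}$ is a proper tiling of $\RR^2$. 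The computational heart is the determination of $\Gamma_{\theta_3}^{({\rm sr})}$ and $\Gamma_{\theta_3}^{({\rm lat})}$, which we only quote; the conceptual obstacle is this last step, namely recognising the lattice tiling of $\R_{\theta_3}$ as the superposition of the tiling by $\Fa$ and the tiling by its reflection $\bfPsi_\alpha(1)-\Fa$.
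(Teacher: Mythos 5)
Your proposal is correct and follows essentially the same route as the paper: pass to the reducible substitution $\theta_3$ of Theorem~\ref{iwip1}, quote the algorithmically computed boundary graphs to get $\mu_{\rm sr}=\mu_{\rm lat}$ (the dominant root of $t^7-2t^2-1$) and apply Corollary~\ref{Hdim} for the dimension, then verify~\eqref{qmc}, obtain the proper lattice tiling by $\R_{\theta_3}$, and refine, translate by the lattice vectors $\bfPsi_\alpha(1-\alpha)$, $\bfPsi_\alpha(1-\alpha^2)$, and re-merge the pieces $X_0,X_1,X_2$ into $\bfPsi_\alpha(1)-\Fa$. The only (harmless) differences are that you spell out the verification of~\eqref{qmc} and the identification $\Xi_{\rm lat}=\Phi(\Xi)$ in slightly more detail than the paper does.
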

\begin{proof}
We consider the substitution $\theta_{3}$ discussed in Theorem~\ref{iwip1}. It is reducible and satisfies the strong coincidence condition.
The dominant root of $\bfM_{\theta_3}$ is given by $\lambda = \alpha \approx 1.46557$.
The statement concerning the Hausdorff dimension is shown analogously to Theorem~\ref{tiling1}.
We algorithmically compute the self-replicating boundary graph $\Gamma^{(\rm sr)}_{\theta_3}$ (41 vertices) and see that the largest eigenvalue of the incidence matrix $\mu_{\rm sr}$ is  the dominant root of $t^7-2t^2-1$.
Since $\R_{\theta_{3}}(a)$ is an affine image of $\Fa$ for each $a \in  \{0, \ldots, 5\}$ by Theorem~\ref{iwip1}
we can apply Corollary~\ref{Hdim} in order to calculate the Hausdorff dimension of the boundary of $\Fa$.

For the second part of the theorem we first observe that $\theta_{3}$ satisfies Condition~\eqref{qmc}. 
Algorithmic calculation of the lattice boundary graph $\Gamma^{({\rm lat})}_{\theta_3}$ (38 vertices) shows that
the largest eigenvalue of the incidence matrix is $\mu_{\rm lat} = \mu_{\rm sr} < \lambda$. From these observations we conclude that the Rauzy fractal $\R_{\theta_{3}}$ induces a proper lattice tiling with respect to the lattice $\Xi_{\rm lat}$.

In order to finish the proof we decompose each translate of the lattice tiling into the respective invariant sets and rearrange then cleverly in order to obtain the collection from the statement
 (see Figure~\ref{tilings3}).

\begin{figure}[h]
  \includegraphics[width=0.9\textwidth]{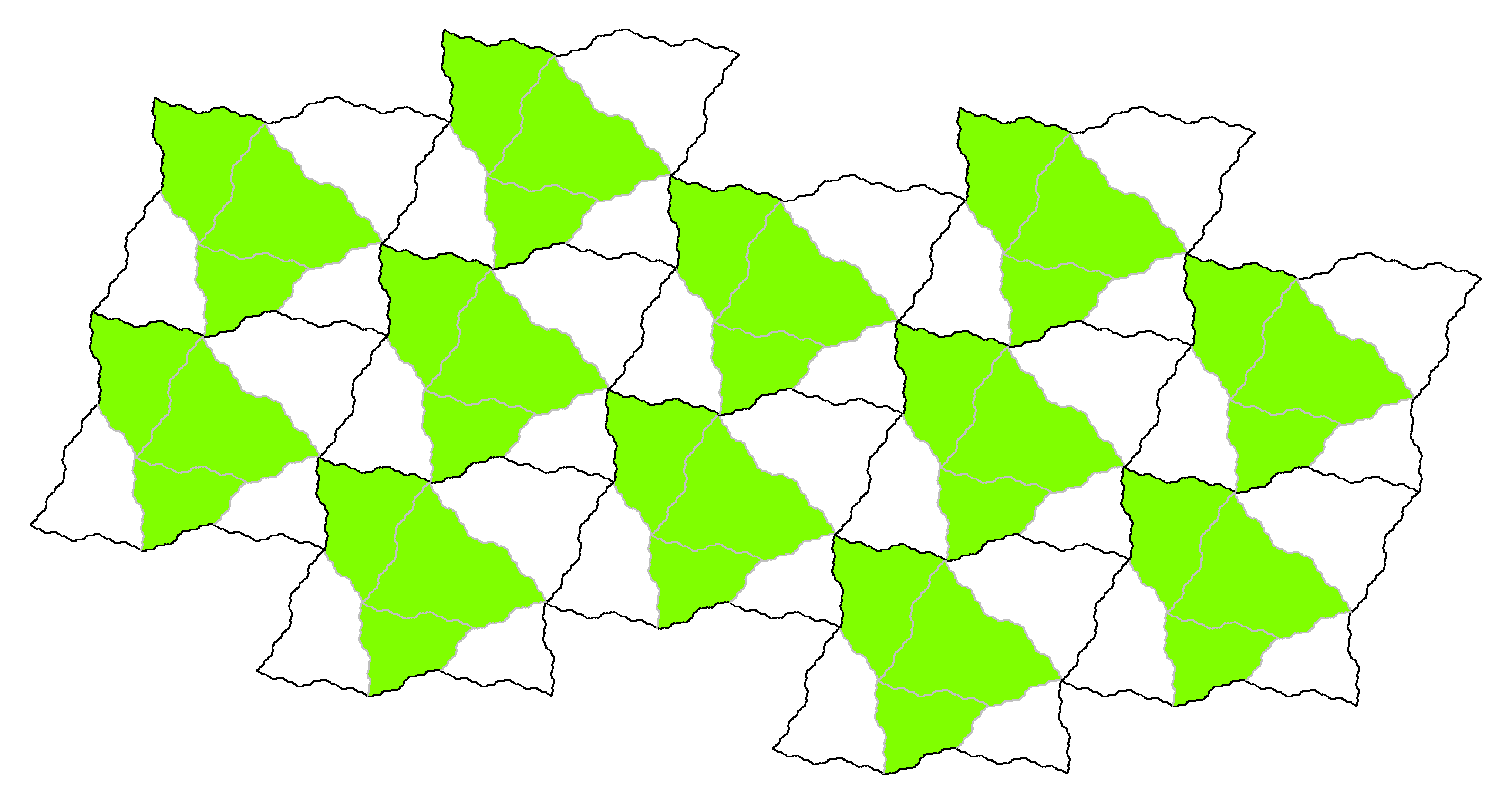}
\caption{The Rauzy fractal $\R_{\theta_3}$ (black boundaries) induces a lattice tiling. 
The boundaries of the invariant sets within $\R_{\theta_3}$ are drawn in grey.
The union of the invariant sets $\R_{\theta_3}(3)$, $\R_{\theta_3}(4)$, $\R_{\theta_3}(5)$ (coloured in green) correspond to the tent-tile $\F_{\alpha_3}$ by Corollary~\ref{iwip1cor}. By joining the other invariant sets across tiles in a suitable way we obtain reflections of $\F_{\alpha_3}$.}
\label{tilings3}
\end{figure}

In particular, since $\{\bfx+\R_{\theta_3} : \bfx \in \Xi_{\rm lat}\}$ is a proper tiling we conclude from
Lemma~\ref{coinc} that
\begin{multline*}
\big\{\bfx+\R_{\theta_3}(0): \bfx \in \Xi_{\rm lat}\big\} \cup \big\{\bfx+\R_{\theta_3}(1): \bfx \in \Xi_{\rm lat}\big\} \cup \big\{\bfx+\R_{\theta_3}(2): \bfx \in \Xi_{\rm lat}\big\}\\
\cup  \big\{\bfx+\big(\R_{\theta_3}(3) \cup \R_{\theta_3}(4) \cup\R_{\theta_3}(5)\big): \bfx \in \Xi_{\rm lat}\big\}
\end{multline*}
is also a proper tiling. We apply 
Theorem~\ref{iwip1} and Corollary~\ref{iwip1cor} and deduce that the collection
\begin{multline*}
\big\{\bfx+\bfPsi{1} - \fR(\Fa): \bfx \in \Xi\big\}
 \cup \big\{\bfx+\bfPsi{\alpha} - \fL\circ \fR(\Fa): \bfx \in \Xi\big\} \\
\cup \big\{\bfx+\bfPsi{\alpha^2} - \fL^2(\Fa): \bfx \in \Xi\big\} 
\cup  \big\{\bfx+\Fa: \bfx \in \Xi\big\}
\end{multline*}
is a proper tiling.
Since $\bfPsi{\alpha-1}$ and $\bfPsi{\alpha^2-1}$ are lattice points
we see that 
\begin{multline*}
\big\{\bfx+\big((\bfPsi{1} - \fR(\Fa)) \cup (\bfPsi{1} - \fL\circ \fR(\Fa)) \cup (\bfPsi{1} - \fL^2(\Fa))\big): \bfx \in \Xi\big\} \\
 \cup \big\{\bfx+\Fa: \bfx \in \Xi\big\}
\end{multline*}
is a proper tiling, too. Now, the statement of the theorem follows immediately by observing that
\begin{multline*}
(\bfPsi{1} - \fR(\Fa)) \cup (\bfPsi{1} - \fL\circ \fR(\Fa)) \cup (\bfPsi{1} - \fL^2(\Fa)) \\
= \bfPsi{1} - (\fR(\Fa) \cup  \fL\circ \fR(\Fa) \cup  \fL^2(\Fa)) 
=  \bfPsi{1} - \Fa.
\end{multline*}
\end{proof}

\begin{remark}
Observe that 
it also follows from \cite[Proposition~4.20]{Arnoux-Berthe-Hilion-Siegel.2006} (in terms of automorphisms of the free group)
that $\R_{\theta_3}$ induces a  lattice multi-tiling. The same  holds true for  Theorem~\ref{tiling-1}, Theorem~\ref{tiling-3} and
Theorem~\ref{tiling4} in an analogous way.
\end{remark}

\begin{theorem}\label{tiling-1}
Let $\alpha=\alpha_{-1}$, $\beta=\beta(\alpha)$ and
\[\Xi:=\left\{u_1\bfPsi{\alpha-\beta} + u_2\bfPsi{\alpha-\beta^2} : u_1, u_2 \in \ZZ\right\}.\]
Then 
\[\dim_H(\partial \Fa) = \frac{2\log(\mu_{\rm sr})}{\log(\beta)}
\approx 1.70018,\]
where $\mu_{\rm sr} \approx 1.61299$ is the dominant root of $t^{10}-t^7-4t^5-4t^4-t^3-4t^2-4t+1$, and the collection $\{\bfx+\Fa : \bfx \in \Xi\} \cup \{\bfx+(\bfPsi{\alpha}-\Fa) : \bfx \in \Xi\}$ provides a proper tiling of the  space $\KK_\alpha=\CC$.
\end{theorem}
\begin{proof}
The proof runs analogously to that of Theorem~\ref{tiling3}.
We consider the substitution $\zeta'_{3}$ discussed in Theorem~\ref{iwip3}. It is reducible, satisfies the weak coincidence condition and Condition~\eqref{qmc}.
The dominant root of $\bfM_{\zeta'_3}$ is given by $\lambda = \beta \approx 1.75488$.
We algorithmically compute the self-replicating boundary graph $\Gamma^{(\rm sr)}_{\zeta'_3}$, which has 47 vertices. It turns out that the largest eigenvalue of the incidence matrix $\mu_{\rm sr}$ is as stated.
As before, Corollary~\ref{Hdim} immediately yield $\dim_H(\partial \Fa)$.

We algorithmically calculate the lattice boundary graph $\Gamma^{(\rm lat)}_{\zeta'_3}$ (46 vertices) and determine 
the largest eigenvalue of the incidence matrix $\mu_{\rm lat} = \mu_{\rm sr} < \lambda$. 
Therefore, the collection $\{\bfx+\R_{\zeta'_{3}}: \bfx \in \Xi_{\rm lat}\}$ is a proper tiling.
From Lemma~\ref{coinc} we deduce that the collection
\begin{multline*}
\{\bfx+(\R_{\zeta_3}(0) \cup \R_{\zeta_3}(1) \cup\R_{\zeta_3}(2)): \bfx \in \Xi_{\rm lat}\}\\
\cup  \{\bfx+(\R_{\zeta_3}(3) \cup \R_{\zeta_3}(4) \cup\R_{\zeta_3}(5)): \bfx \in \Xi_{\rm lat}\}
\end{multline*}
is also a proper tiling. 
We apply Theorem~\ref{iwip3} and observe that $\bfPsi{\alpha-\beta}, \bfPsi{\beta-\beta^2} \in \Xi$. This shows that
\begin{multline*}
\{\bfx+(\bfPhi{\alpha}(\Fa) \cup (\bfPsi{\beta} - \bfPhi{\alpha\beta}(\Fa)) \cup (\bfPsi{\beta-\beta^2} - \bfPhi{\beta^2}(\Fa)): \bfx \in \Xi\} \cup \\
  \{\bfx+((\bfPsi{\alpha} -(\bfPhi{\alpha}(\Fa)) \cup (\bfPsi{\alpha-\beta} + \bfPhi{\alpha\beta}(\Fa)) \cup (\bfPsi{\alpha-\beta+\beta^2}- \bfPhi{\beta^2}(\Fa))): \bfx \in \Xi\}
\end{multline*}
is a proper tiling, too. 
From the observations
\begin{multline*}
\bfPhi{\alpha}(\Fa) \cup (\bfPsi{\beta} - \bfPhi{\alpha\beta}(\Fa)) \cup (\bfPsi{\beta-\beta^2} - \bfPhi{\beta^2}(\Fa)) \\
=\fL(\Fa) \cup \fR\circ \fL(\Fa) \cup f^2_R(\Fa) = \Fa,
\end{multline*}
\begin{multline*}
(\bfPsi{\alpha} -\bfPhi{\alpha}(\Fa)) \cup (\bfPsi{\alpha-\beta} + \bfPhi{\alpha\beta}(\Fa)) \cup (\bfPsi{\alpha-\beta+\beta^2}- \bfPhi{\beta^2}(\Fa)) \\
= (\bfPsi{\alpha} - \fL(\Fa)) \cup ((\bfPsi{\alpha} -\fR\circ \fL(\Fa)) \cup (\bfPsi{\alpha} - f^2_R(\Fa)) = \bfPsi{\alpha} -\Fa
\end{multline*}
the theorem follows immediately.
\end{proof}

\begin{theorem}\label{tiling-3}
Let $\alpha=\alpha_{-3}$, $\beta=\beta(\alpha)$ and 
\[\Xi:=\left\{u_1\bfPsi{1-\beta} + u_2\bfPsi{1-\beta^2} : u_1, u_2 \in \ZZ\right\}.\]
Then 
\[\dim_H(\partial \Fa) = \frac{2\log(\mu_{\rm sr})}{\log(\beta)}
\approx 1.25074,\]
where $\mu_{\rm sr} \approx 1.27004$ is the dominant root of $t^{10}-t^7-t^3-2t-1$,
and the collection $\{\bfx+\Fa : \bfx \in \Xi\} \cup \{\bfx+(\bfPsi{1}-\Fa) : \bfx \in \Xi\}$ is a proper tiling of the  space $\KK_\alpha=\CC$.
\end{theorem}
\begin{proof}
We use the well-known strategy from  Theorem~\ref{tiling3} and Theorem~\ref{tiling-1}.
Consider the substitution $\theta'_{3}$ discussed in Theorem~\ref{iwip2}. It is reducible and satisfies the weak coincidence condition as well as Condition~\eqref{qmc}.
The dominant root of $\bfM_{\theta'_3}$ is given by $\lambda=\beta  \approx 1.46557$.
We algorithmically compute the self-replicating boundary graph $\Gamma^{(\rm sr)}_{\theta'_3}$ (27 vertices) and realise that
the dominant eigenvalue of its adjacency matrix $\mu_{\rm sr}$ is the dominant root of $t^{10}-t^7-t^3-2t-1$.
Corollary~\ref{Hdim} yields the Hausdorff dimension $\dim_H(\partial \Fa)$.

The collection $\{\bfx+\R_{\theta'_{3}}: \bfx \in \Xi_{\rm lat}\}$
is a proper tiling since algorithmic calculation shows that $\mu_{\rm lat} = \mu_{\rm sr} < \lambda$ ($\Gamma^{(\rm lat)}_{\theta'_3}$ has 25 vertices). 
By observing Lemma~\ref{coinc}, Theorem~\ref{iwip2} and the fact that $\bfPsi{1-\beta}, \bfPsi{\beta-\beta^2} \in \Xi$
we see that 
\begin{multline*}
\{\bfx+(\bfPhi{\alpha}(\Fa) \cup (\bfPsi{\beta} - \bfPhi{\alpha\beta}(\Fa)) \cup (\bfPsi{\beta-\beta^2} - \bfPhi{\beta^2}(\Fa))): \bfx \in \Xi\} \cup \\
  \{\bfx+((\bfPsi{1} -\bfPhi{\alpha}(\Fa)) \cup (\bfPsi{1-\beta} + \bfPhi{\alpha\beta}(\Fa) \cup (\bfPsi{1-\beta+\beta^2}- \bfPhi{\beta^2}(\Fa))): \bfx \in \Xi\}
\end{multline*}
is also a proper tiling. From this one easily obtains the statement from the theorem.
\end{proof}
The right hand side of Figure~\ref{tilings} shows the structure of the tiling induced by the tent-tile $\F_{\alpha_{-3}}$.

Two tent-tiles do not seem to induce lattice tilings. Here we only calculate the Hausdorff dimension of the boundary which is done in the same way as in the previous theorems.

\begin{theorem}\label{TT5}
Let $\alpha=\alpha_{5}$. Then 
\[\dim_H(\partial \Fa) = \frac{2\log(\mu_{\rm sr})}{\log(\alpha)}
\approx 1.37858,\]
where $\mu_{\rm sr} \approx 1.21389$ is the dominant root of $t^{13}-t^7-t^6-2t^4+1$.
\end{theorem}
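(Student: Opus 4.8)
The plan is to reduce everything to the relation with Rauzy fractals established in Theorem~\ref{iwip1} and then to apply Corollary~\ref{Hdim}. Since $\alpha=\alpha_5$ satisfies $\alpha^5=\beta$ (see Table~\ref{dependencies}), the relevant substitution is $\theta_5$ over the alphabet $\A=\{0,\ldots,9\}$. By Theorem~\ref{iwip1} it is a primitive unimodular Pisot substitution with dominant root $\lambda_0=\alpha\approx 1.32472$, and it satisfies the strong coincidence condition. Because the minimal polynomial $t^3-t-1$ of $\alpha$ has one real root and a complex conjugate pair, the Galois conjugates of $\alpha$ (which are eigenvalues of $\bfM_{\theta_5}$) satisfy $\abs{\lambda_1}=\abs{\lambda_2}=\alpha^{-\nicefrac{1}{2}}$; in particular $d=2$ and $\abs{\lambda_1}=\abs{\lambda_d}$, which will give equality rather than merely an upper bound in Corollary~\ref{Hdim}.

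The first concrete step is to construct the self-replicating boundary graph $\Gamma_{\theta_5}^{({\rm sr})}$ by the algorithm of \cite[Section~5.2]{Siegel-Thuswaldner.2009} (using the compact description of Definition~\ref{def:srsG}), and then to read off the dominant eigenvalue $\mu_{\rm sr}$ of its adjacency matrix. The claim is that this yields $\mu_{\rm sr}$ as the dominant root of $t^{13}-t^7-t^6-2t^4+1$, so that $\mu_{\rm sr}\approx 1.21389$. One then checks $\mu_{\rm sr}<\lambda_0$, which is exactly the hypothesis needed to invoke Proposition~\ref{Bdim} and hence Corollary~\ref{Hdim}.

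Granting this, Corollary~\ref{Hdim} produces a letter $a\in\A$ with
\[\dim_H(\partial\R_{\theta_5}(a)) = d+\frac{\log\lambda_0-\log\mu_{\rm sr}}{\log\abs{\lambda_d}} = 2-\frac{2\bigl(\log\alpha-\log\mu_{\rm sr}\bigr)}{\log\alpha} = \frac{2\log\mu_{\rm sr}}{\log\alpha},\]
where we used $d=2$ and $\log\abs{\lambda_d}=-\tfrac12\log\alpha$, and where equality holds because $\abs{\lambda_1}=\abs{\lambda_d}$. By Theorem~\ref{iwip1} every set $\R_{\theta_5}(k)$, $k\in\A$, is the image of $\Fa$ under an invertible affine map composed with the linear isomorphism $\Phi$; since affine homeomorphisms preserve Hausdorff dimension of boundaries, $\dim_H(\partial\R_{\theta_5}(k))=\dim_H(\partial\Fa)$ for every $k$. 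Combining this with the displayed identity gives $\dim_H(\partial\Fa)=\frac{2\log\mu_{\rm sr}}{\log\alpha}\approx 1.37858$, as asserted.

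The main obstacle is the boundary-graph computation itself: $\theta_5$ acts on a ten-letter alphabet, so $\Gamma_{\theta_5}^{({\rm sr})}$ is sizeable, and one must carry out the construction of \cite{Siegel-Thuswaldner.2009} with enough care to be sure that its dominant strongly connected component has the stated characteristic polynomial and that $\mu_{\rm sr}<\lambda_0$. Everything else is a direct application of Theorem~\ref{iwip1} and Corollary~\ref{Hdim}. Unlike in Theorems~\ref{tiling3}, \ref{tiling-1} and \ref{tiling-3}, no lattice boundary graph is needed here, since for $\alpha_5$ no lattice tiling is claimed.
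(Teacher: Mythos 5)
Your proposal is correct and follows essentially the same route as the paper: identify $\theta_5$ via Theorem~\ref{iwip1} (using $\alpha^5=\beta$), compute the self-replicating boundary graph to get $\mu_{\rm sr}$ as the dominant root of $t^{13}-t^7-t^6-2t^4+1$, and apply Corollary~\ref{Hdim} together with the fact that each $\R_{\theta_5}(a)$ is an affine image of $\Fa$. The only addition you make beyond the paper's argument is writing out the algebraic simplification $2+\frac{\log\alpha-\log\mu_{\rm sr}}{-\frac12\log\alpha}=\frac{2\log\mu_{\rm sr}}{\log\alpha}$ explicitly, which the paper leaves implicit.
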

\begin{proof}
We consider the substitution $\theta_{5}$. 
It is reducible, satisfies the strong coincidence condition but does not satisfy Condition~\eqref{qmc} (therefore, $\R_{\theta'_{5}}$ does not induce a lattice multi-tiling). From Theorem~\ref{iwip1} we see that $\lambda= \alpha \approx 1.32472$ is the dominant root of $\bfM_{\theta_5}$. 

We algorithmically compute the self-replicating boundary graph $\Gamma^{(\rm sr)}_{\theta_5}$ (39 vertices).
It turns out that the dominant eigenvalue of its adjacency matrix is the dominant root of the polynomial $t^{13}-t^7-t^6-2t^4+1$.
For each $a \in \{0, \ldots, 9\}$ the invariant set  
$\R_{\theta_{5}}(a)$ is an affine image of $\Fa$, therefore, 
$\dim_B(\partial\R_{\theta_{5}}(a)) = \dim_B(\partial\Fa)$. Now, the statement of the theorem follows immediately from Corollary~\ref{Hdim}.
\end{proof}

\begin{theorem}\label{TT-5}
Let $\alpha=\alpha_{-5}$ and $\beta=\beta(\alpha)$.
Then 
\[\dim_H(\partial \Fa) = \frac{2\log(\mu_{\rm sr})}{\log(\beta)}
\approx 1.92089,\]
where $\mu_{\rm sr} \approx 1.31007$ is the dominant root of $t^{21}-t^{20}+t^{19}-t^{18}+t^{17}-2t^{16}+t^{15}-2t^{14}
+2t^{13}-3t^{12}+t^{11}+t^9-2t^8-3t^6-2t^5-2t^4-t^3-t^2+t+1$.
\end{theorem}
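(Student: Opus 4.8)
The plan is to realise $\Fa$ inside the Rauzy fractal of a suitable substitution and then invoke Corollary~\ref{Hdim}. From Table~\ref{dependencies} one reads off $\alpha_{-5}=\alpha_5^{5}$ and $\beta(\alpha_{-5})=\alpha_5$, so that $\alpha=\beta^{q}$ with $q=5$. Since $q\geq 3$ is odd, Theorem~\ref{iwip2} applies: $\theta'_5$ (defined over $\A=\{0,\ldots,9\}$ by~\eqref{thetasta}) is a primitive unimodular Pisot substitution with dominant root $\lambda_0=\beta=\alpha_5\approx 1.32472$. Its minimal polynomial $t^3-t-1$ has the real root $\beta$ and a pair of complex conjugate roots $\lambda_1=\overline{\lambda_2}$; because the product of all three roots is $1$, we get $\abs{\lambda_1}=\abs{\lambda_2}=\beta^{-\nicefrac{1}{2}}$. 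In particular $d=2$ and $\abs{\lambda_1}=\abs{\lambda_d}$, which is exactly the situation in which the equality case of Corollary~\ref{Hdim} is available — provided we can certify $\mu_{\rm sr}<\lambda_0$.

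The computational heart of the proof, and the step I expect to be the main obstacle, is the explicit construction of the self-replicating boundary graph $\Gamma^{(\rm sr)}_{\theta'_5}$ via the algorithm of \cite[Section~5.2]{Siegel-Thuswaldner.2009}: first enumerate the candidate vertices $[a_1,\bfxi,a_2]\in\mathcal{D}$ obeying the norm bound of Definition~\ref{def:srsG}(1), then insert the edges prescribed in~(2), and finally prune to the largest subgraph satisfying~(3). A direct (but lengthy) determination of the adjacency matrix and its Perron eigenvalue then shows that $\mu_{\rm sr}$ equals the dominant root $\mu_{\rm sr}\approx 1.31007$ of the stated degree-$21$ polynomial, and in particular $\mu_{\rm sr}<\lambda_0$. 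Everything else is formal: the substitution $\theta'_5$ is reducible and (like $\theta'_3$ in Theorem~\ref{tiling-3}) does not satisfy Condition~\eqref{qmc}, which is consistent with the ``$?$'' in Table~\ref{specialpisotnumbers}, but Proposition~\ref{Bdim} and Corollary~\ref{Hdim} only require primitivity, unimodularity, the Pisot property and $\mu_{\rm sr}<\lambda_0$.

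With $\mu_{\rm sr}<\lambda_0$ in hand, Corollary~\ref{Hdim} yields a letter $a\in\A$ with
\[
\dim_H\bigl(\partial\R_{\theta'_5}(a)\bigr)
= d+\frac{\log(\lambda_0)-\log(\mu_{\rm sr})}{\log(\abs{\lambda_d})}
= 2+\frac{\log(\beta)-\log(\mu_{\rm sr})}{-\frac{1}{2}\log(\beta)}
= \frac{2\log(\mu_{\rm sr})}{\log(\beta)}.
\]
Finally I would transfer this to $\partial\Fa$: by Theorem~\ref{iwip2} we have $\R_{\theta'_5}(a)=\Phi(X_a)$ where $\Phi$ is a linear isomorphism and $X_a$ is one of $\bfB_\alpha^{k}\bfA_\alpha\,\Fa$, $\bfPsi_\alpha(\beta^{k})-\bfB_\alpha^{k}\bfA_\alpha\,\Fa$, $\bfB_\alpha^{q-1}\,\Fa$ or $\bfPsi_\alpha(\beta^{q-1})-\bfB_\alpha^{q-1}\,\Fa$. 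Since $\alpha$ is an algebraic unit, $\bfA_\alpha$ and $\bfB_\alpha$ are invertible, so $X_a$ is an invertible affine image of $\Fa$ and $\partial\R_{\theta'_5}(a)=\Phi(\partial X_a)$ is a bi-Lipschitz image of $\partial\Fa$. As bi-Lipschitz maps preserve Hausdorff dimension, $\dim_H(\partial\Fa)=\dim_H(\partial\R_{\theta'_5}(a))=\frac{2\log(\mu_{\rm sr})}{\log(\beta)}\approx 1.92089$, which is the assertion.
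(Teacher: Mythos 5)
Your proposal matches the paper's proof in all essentials: both identify $\alpha_{-5}=\beta^5$ so that Theorem~\ref{iwip2} applies to $\theta'_5$, both reduce the statement to the algorithmic computation of the self-replicating boundary graph and its Perron eigenvalue $\mu_{\rm sr}$ (which the paper likewise asserts rather than carries out), and both conclude via Corollary~\ref{Hdim} together with the observation that each $\R_{\theta'_5}(a)$ is an invertible affine (hence bi-Lipschitz) image of $\Fa$. The only slip is your parenthetical claim that $\theta'_3$ fails Condition~\eqref{qmc} in Theorem~\ref{tiling-3} --- there it is in fact satisfied --- but this is immaterial, since \eqref{qmc} plays no role in the dimension argument.
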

\begin{proof}
We consider the substitution $\theta'_{5}$. 
It is reducible and does not satisfy the strong coincidence condition. However, one easily verifies that the weak coincidence condition is satisfied. From Theorem~\ref{iwip2} we see that $\lambda=\beta  \approx 1.32472$ is the dominant root of $\bfM_{\theta'_5}$. 
Since Condition~\eqref{qmc} is not satisfied, $\R_{\theta'_{5}}$ does not induce a lattice multi-tiling. 

Without any problems one algorithmically calculates the self replicating boundary graph (151 vertices) and 
verify that the largest eigenvalue of its incidence matrix $\mu_{\rm sr}$ is exactly as claimed.
With this, $\dim_H(\partial \Fa)$ can be calculated from Corollary~\ref{Hdim}.
\end{proof}

\subsection{Three dimensional tent-tiles}

The  special Pisot numbers $\alpha_{-4}$ and $\alpha_4$ are of algebraic degree four, thus,  they induce
three-dimensional tent-tiles. For both cases the other Galois conjugates consist of a real number and a complex conjugate pair of numbers, that is  $r=s=1$. Especially, these conjugates have distinct modulus and therefore Corollary~\ref{Hdim} does only yield an upper bound for the Hausdorff dimension. However, the strategy we used for showing the results for planar tent-tiles also works for the three-dimensional case. 

\begin{theorem}\label{tiling4}
Let $\alpha=\alpha_{4}$ and
\[\Xi:=\left\{u_1\bfPsi{1-\alpha} + u_2\bfPsi{1-\alpha^2}+ u_3\bfPsi{1-\alpha^3} : u_1, u_2, u_3 \in \ZZ\right\}.\]
Then 
\[\dim_H(\partial \Fa) \leq \dim_B(\partial \Fa) = 3 +\frac{\log(\alpha) - \log(\mu_{\rm sr})}{\log\abs{\alpha^{(1)}}}
\approx 2.74421,\]
where $\mu_{\rm sr} \approx 1.31162$ is the dominant root of $t^{15}- 4 t^8- 2 t^6- 2 t^5- 2 t^3 -1$ and
$\alpha^{(1)} \approx -0.81917$ is the smallest Galois conjugate (according to modulus) of $\alpha$.
The collection $\{\bfx+\Fa : \bfx \in \Xi\} \cup \{\bfx+(\bfPsi{1}-\Fa) : \bfx \in \Xi\}$ provides a proper tiling of the  space $\KK_\alpha=\RR\times\CC$.
\end{theorem}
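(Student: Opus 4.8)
The plan is to mirror the proofs of Theorems~\ref{tiling3}, \ref{tiling-1} and \ref{tiling-3}, working with the substitution $\theta_4$ from Theorem~\ref{iwip1}. This is the right choice because $\alpha_4^4 = \alpha_{-4} = \beta$, so the hypothesis $\alpha^q = \beta$ of Theorem~\ref{iwip1} holds with $q = 4$. By that theorem, $\theta_4$ is a primitive, unimodular, reducible Pisot substitution over the eight-letter alphabet $\{0, \ldots, 7\}$ with dominant root $\lambda_0 = \alpha$ (the root of $t^4 - t^3 - 1$), and it satisfies the strong coincidence condition because $\theta_4^{8}(k)$ begins with the letter $7$ for every $k$. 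Its remaining Galois conjugates are a complex conjugate pair $\lambda_1 = \overline{\lambda_2}$ together with the real conjugate $\lambda_3 = \lambda_d \approx -0.81917$; in particular $\abs{\lambda_1} = \abs{\lambda_2} \neq \abs{\lambda_d}$, which is exactly why Corollary~\ref{Hdim} will only produce an upper bound for $\dim_H(\partial\Fa)$ whereas Proposition~\ref{Bdim} still pins down $\dim_B(\partial\Fa)$ exactly.

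For the dimension statement I would algorithmically construct the self-replicating boundary graph $\Gamma^{(\rm sr)}_{\theta_4}$ following \cite[Section~5.2]{Siegel-Thuswaldner.2009}, read off that the dominant eigenvalue $\mu_{\rm sr}$ of its incidence matrix is the dominant root of $t^{15} - 4t^8 - 2t^6 - 2t^5 - 2t^3 - 1$, and verify $\mu_{\rm sr} \approx 1.31162 < \lambda_0$. By Theorem~\ref{iwip1} each $\R_{\theta_4}(a)$ is an affine homeomorphic image of $\Fa$ (a contracted, possibly reflected and translated copy), so $\dim_B(\partial\R_{\theta_4}(a)) = \dim_B(\partial\Fa)$ for every $a \in \A$; combined with Proposition~\ref{Bdim} this forces $\dim_B(\partial\Fa) = 3 + \frac{\log\lambda_0 - \log\mu_{\rm sr}}{\log\abs{\lambda_3}} \approx 2.74421$, and $\dim_H \le \dim_B$ gives the claimed inequality.

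For the tiling I would first check Condition~\eqref{qmc}: choosing the letters $a_1 = 1$, $a_2 = 2$, $a_3 = 3$, $a_4 = 0$ and using the embedding $\Phi$ from Theorem~\ref{iwip1}, one computes $\pi(\bfell(a) - \bfell(0)) = \Phi(\bfPsi_\alpha(1 - \alpha^{a \bmod 4}))$ for every $a \in \{0, \ldots, 7\}$, so Condition~\eqref{qmc} holds and $\Xi_{\rm lat} = \Phi(\Xi)$ with $\Xi$ exactly the lattice in the statement. Next I would algorithmically compute the lattice boundary graph $\Gamma^{(\rm lat)}_{\theta_4}$, observe that its dominant eigenvalue satisfies $\mu_{\rm lat} = \mu_{\rm sr} < \lambda_0$, and apply Proposition~\ref{tilingproperty} to conclude that $\{\Phi(\bfxi) + \R_{\theta_4} : \bfxi \in \Xi\}$ is a proper tiling of $\mathbb{K}_c$. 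Transporting this back to $\RR^3$ via $\Phi^{-1}$ and decomposing each cell into the eight pieces $X_0, \ldots, X_7$ of Theorem~\ref{iwip1} — a measure-disjoint decomposition by strong coincidence and Lemma~\ref{coinc} — I would regroup: the four pieces satisfy $X_4 \cup X_5 \cup X_6 \cup X_7 = \fR(\Fa) \cup \fL\circ\fR(\Fa) \cup \fL^2\circ\fR(\Fa) \cup \fL^3(\Fa) = \Fa$ (Corollary~\ref{iwip1cor}), while translating $X_a = \bfPsi_\alpha(\alpha^a) - \fL^a\circ\fR(\Fa)$ by the lattice vector $\bfPsi_\alpha(1 - \alpha^a) \in \Xi$ for $a = 1, 2, 3$ (and leaving $X_0 = \bfPsi_\alpha(1) - \fR(\Fa)$ fixed) turns $X_0 \cup X_1 \cup X_2 \cup X_3$ into $\bfPsi_\alpha(1) - \big(\fR(\Fa) \cup \fL\circ\fR(\Fa) \cup \fL^2\circ\fR(\Fa) \cup \fL^3(\Fa)\big) = \bfPsi_\alpha(1) - \Fa$. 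This yields the asserted proper tiling $\{\bfxi + \Fa : \bfxi \in \Xi\} \cup \{\bfxi + (\bfPsi_\alpha(1) - \Fa) : \bfxi \in \Xi\}$.

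The main obstacle is purely computational: for the reducible eight-letter substitution $\theta_4$ the self-replicating and lattice boundary graphs are substantially larger than in the planar cases, so confirming the degree-$15$ minimal polynomial of $\mu_{\rm sr}$ and the coincidence $\mu_{\rm lat} = \mu_{\rm sr}$ requires a careful machine-assisted computation; everything else is a routine adaptation of the arguments already used for the planar ``w.r.'' tent-tiles.
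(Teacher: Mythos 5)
Your proposal is correct and follows essentially the same route as the paper: the substitution $\theta_4$ from Theorem~\ref{iwip1}, algorithmic computation of $\Gamma^{(\rm sr)}_{\theta_4}$ and $\Gamma^{(\rm lat)}_{\theta_4}$ with $\mu_{\rm lat}=\mu_{\rm sr}<\lambda_0$, Proposition~\ref{Bdim} and Corollary~\ref{Hdim} for the dimensions, and the regrouping of $X_0,\dots,X_7$ via Lemma~\ref{coinc}, Corollary~\ref{iwip1cor} and the lattice translations $\bfPsi_\alpha(1-\alpha^k)\in\Xi$ into $\Fa$ and $\bfPsi_\alpha(1)-\Fa$. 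The only blemish is the inline formula $X_a=\bfPsi_\alpha(\alpha^a)-\fL^a\circ\fR(\Fa)$ for $a=3$, where Theorem~\ref{iwip1} gives $X_3=\bfPsi_\alpha(\alpha^{3})-\fL^{3}(\Fa)$ (no $\fR$), but your displayed union uses the correct piece, so the argument is unaffected.
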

\begin{proof}
The proof runs quite analogous to that of Theorem~\ref{tiling3}.
We consider the substitution $\theta_{4}$ (see Theorem~\ref{iwip1}) which is reducible and satisfies the strong coincidence condition.
The dominant root of $\bfM_{\theta_4}$ is given by $\lambda = \alpha \approx 1.38028$.
For the Galois conjugates  $\lambda^{(1)} \in \RR$ and $\lambda^{(2)}$, $\overline{\lambda^{(2)}} \in \CC$ we have
$\abs{\lambda^{(1)}} \approx 0.81917 < \abs{\lambda^{(2)}} \approx 0.94044$.
Algorithmic computation of the self-replicating boundary graph $\Gamma^{(\rm sr)}_{\theta_4}$ (374 vertices) yields
$\mu_{\rm sr} \approx 1.31162$ (the dominant root of $t^{15}- 4 t^8- 2 t^6- 2 t^5- 2 t^3 -1$).
Now, the statement concerning the fractal dimension immediately follows from  Theorem~\ref{Bdim} and Corollary~\ref{Hdim}.

The collection $\{\bfx+\R_{\theta_{4}}: \bfx \in \Xi_{\rm lat}\}$
is a proper tiling since  $\theta_{4}$ satisfies Condition~\eqref{qmc} and algorithmic calculation shows that $\mu_{\rm lat} = \mu_{\rm sr} < \lambda_0$ ($\Gamma^{(\rm lat)}_{\theta_4}$ has 370 vertices).
From Lemma~\ref{coinc} we deduce that 
\begin{multline*}
\{\bfx+(\R_{\theta_{4}}(0) \cup \R_{\theta_{4}}(1) \cup \R_{\theta_{4}}(2) \cup \R_{\theta_{4}}(3)): \bfx \in \Xi_{\rm lat}\} \\
\cup 
\{\bfx+(\R_{\theta_{4}}(4) \cup \R_{\theta_{4}}(5) \cup \R_{\theta_{4}}(6) \cup \R_{\theta_{4}}(7)): \bfx \in \Xi_{\rm lat}\}
\end{multline*}
is a tiling.
We observe Theorem~\ref{iwip1}, Corollary~\ref{iwip1cor} and that
$\bfPsi{1-\alpha}, \bfPsi{1-\alpha^2}, \bfPsi{1-\alpha^3} \in \Xi$  in order to 
see that
\begin{multline*}
\{\bfx+((\bfPsi{1} - \fR(\Fa)) \cup (\bfPsi{1} - \fL\circ \fR(\Fa)) \cup \\
(\bfPsi{1} - \fL^2\circ \fR(\Fa)) \cup (\bfPsi{1} - \fL^3(\Fa))): \bfx \in \Xi\} 
\cup \{\bfx+\Fa: \bfx \in \Xi\}
\end{multline*}
is also a tiling and from this the theorem follows immediately.
\end{proof}

\begin{theorem}\label{tiling-4}
Let $\alpha=\alpha_{-4}$, $\beta=\beta(\alpha)$ and
\[\Xi:=\left\{u_1\bfPsi{1-\beta} + u_2\bfPsi{1-\beta^2}+ u_3\bfPsi{1-\beta^3} : u_1, u_2, u_3 \in \ZZ\right\}.\]
Then 
\[\dim_H(\partial \Fa) \leq \dim_B(\partial \Fa) = 3 +\frac{2\log(\beta) - \log(\mu_{\rm sr})}{2\log\abs{\beta^{(1)}}}
\approx 2.815,\]
where $\mu_{\rm sr} \approx 1.77033$ is the dominant root of $t^{28}- 2 t^{27}+ t^{26}- 6 t^{23}+ 2 t^{22}+ 6 t^{20}- 
 2 t^{19}+ 11 t^{18}+ t^{17} + 8 t^{16}- 14 t^{14}- 12 t^{13} - 5 t^{12}+ 
 9 t^{11}+ 14 t^{10}+ 8 t^9- t^8- 13 t^7+ 4 t^6- t^5- t^3+ t^2- 2 t+1$ and
$\beta^{(1)} \approx -0.81917$ is the smallest Galois conjugate (according to modulus) of $\beta$.
The  collection $\{\bfx+\Fa : \bfx \in \Xi\}$ provides a proper tiling of the space $\KK_\alpha=\RR\times\CC$.
\end{theorem}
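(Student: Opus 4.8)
The plan is to run the argument of Theorem~\ref{tiling4} and Theorem~\ref{tiling-3}, this time with the substitution $\theta'_4$ of Theorem~\ref{iwip2}. Since Table~\ref{dependencies} gives $\alpha_{-4}=\alpha_4^4$ and $\beta(\alpha_{-4})=\alpha_4$, we have $\alpha=\beta^4$, so we are in the even case of Theorem~\ref{iwip2} with $q=4$; thus $\theta'_4$ (defined by \eqref{thetastb}) is a primitive unimodular Pisot substitution, its incidence matrix is $(\bfM^+_4+\bfM^-_4)^2$, and its dominant root is $\lambda_0=\beta^2$. Its incidence matrix being $4\times 4$ and $\beta^2$ having degree $4$ over $\QQ$, the substitution $\theta'_4$ is in fact \emph{irreducible} ($m=d=3$); in particular Condition~\eqref{qmc} holds automatically and the strong and weak coincidence notions coincide. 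As in the previous cases one verifies (algorithmically) that $\theta'_4$ satisfies the coincidence condition, so by Lemma~\ref{coinc} the sets $\R_{\theta'_4}(0),\dots,\R_{\theta'_4}(3)$ have pairwise disjoint interiors. (It is the square of the orientation-reversing automorphism $\varphi'_4$ of Remark~\ref{remark2}, but this plays no role in the proof.)

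Next I would compute the self-replicating boundary graph $\Gamma^{({\rm sr})}_{\theta'_4}$ following \cite[Section~5.2]{Siegel-Thuswaldner.2009}, read off that $\mu_{\rm sr}$ is the stated dominant root, and note $\mu_{\rm sr}\approx 1.77<\beta^2\approx 1.91=\lambda_0$. The Galois conjugates of $\lambda_0=\beta^2$ are $\lambda_1^2,\lambda_2^2,\lambda_3^2$ with $\abs{\lambda_1}=\abs{\lambda_2}\approx 0.94044>\abs{\lambda_3}\approx 0.81917$, so $\lambda_d=\lambda_3^2$ and Proposition~\ref{Bdim} gives, for some letter $a$, $\dim_B(\partial\R_{\theta'_4}(a))=3+(\log\beta^2-\log\mu_{\rm sr})/\log\abs{\lambda_3}^2$. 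By the even case of Theorem~\ref{iwip2} every $\R_{\theta'_4}(a)$ is an affine (possibly orientation-reversing) image of $\Fa$, hence the box dimension of its boundary is the same for all $a$ and equals $\dim_B(\partial\Fa)$; since $\abs{\lambda_1}\neq\abs{\lambda_d}$, Corollary~\ref{Hdim} yields only the inequality $\dim_H(\partial\Fa)\le\dim_B(\partial\Fa)=3+(2\log\beta-\log\mu_{\rm sr})/(2\log\abs{\lambda_3})$, which is the asserted value.

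For the tiling I would compute the lattice boundary graph $\Gamma^{({\rm lat})}_{\theta'_4}$, check $\mu_{\rm lat}=\mu_{\rm sr}<\lambda_0$, and conclude from Proposition~\ref{Rauzytiling} (using irreducibility) and Proposition~\ref{tilingproperty} that $\{\bfxi+\R_{\theta'_4}:\bfxi\in\Xi_{\rm lat}\}$ is a proper tiling of $\mathbb{K}_c$. A short computation with the matrix $\bfP$ of Theorem~\ref{iwip2} identifies $\Xi_{\rm lat}$ with $\Phi(\Xi)$: both equal the $\bfPsi_\alpha$-image of the $\ZZ$-module generated by the differences $\beta^i-\beta^j$, $0\le i,j\le 3$. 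The one bookkeeping point — in contrast to the irreducible planar case $\alpha_1$ in Theorem~\ref{tiling1} — is that $\Phi(\Fa)$ is not literally $\R_{\theta'_4}$: iterating $\Fa=\fL(\Fa)\cup\fR(\Fa)$ three times in the second branch gives $\Fa=X_0\cup X_1\cup(\bfxi_0+X_2)\cup(\bfxi_0+X_3)$ with $\bfxi_0=\bfPsi_\alpha(\beta-\beta^2)\in\Xi$, so $\Phi(\Fa)$ is obtained from $\R_{\theta'_4}$ by translating the pieces $\R_{\theta'_4}(2)$ and $\R_{\theta'_4}(3)$ by the lattice vector $\Phi(\bfxi_0)\in\Xi_{\rm lat}$. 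Because $\Phi(\bfxi_0)$ is a lattice vector and, by Lemma~\ref{coinc} together with the properness of the tiling, the four translated pieces are pairwise measure-theoretically disjoint, this regrouping leaves the covering degree equal to $1$; hence $\{\bfxi+\Phi(\Fa):\bfxi\in\Xi_{\rm lat}\}$ is still a proper tiling, and applying $\Phi^{-1}$ gives the proper tiling $\{\bfxi+\Fa:\bfxi\in\Xi\}$ of $\RR^3$.

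I expect the main obstacle to be purely computational: for the three-dimensional substitution $\theta'_4$ the self-replicating and lattice boundary graphs are sizeable — the degree-$28$ characteristic polynomial of $\mu_{\rm sr}$ reflects this — so the algorithm of \cite{Siegel-Thuswaldner.2009} must be carried out carefully, and one must verify the (comfortable but not overwhelming) inequality $\mu_{\rm sr}<\beta^2$. The only genuinely non-routine conceptual step is confirming the coincidence condition for $\theta'_4$, since here — unlike for $\zeta_p$ and $\theta_q$ — no power of the substitution sends every letter to a word with a common first letter, so the verification has to proceed through the coincidence graph rather than by a direct inspection of prefixes.
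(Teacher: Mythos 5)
Your proposal follows essentially the same route as the paper's proof: it invokes Theorem~\ref{iwip2} for $\theta'_4$, Proposition~\ref{Bdim} and Corollary~\ref{Hdim} for the dimension (with the conjugates of $\lambda_0=\beta^2$ handled correctly), and the same regrouping $\Fa=X_0\cup X_1\cup(\bfxi_0+X_2)\cup(\bfxi_0+X_3)$ with $\bfxi_0=\bfPsi_\alpha(\beta-\beta^2)$ a lattice point for the tiling. The only immaterial deviation is that you establish properness of the lattice tiling by computing $\Gamma^{({\rm lat})}_{\theta'_4}$ and checking $\mu_{\rm lat}=\mu_{\rm sr}<\lambda_0$, whereas the paper exploits the irreducibility of $\theta'_4$ together with Proposition~\ref{tilingproperty} and the equivalence of the self-replicating and lattice tiling properties in the irreducible case.
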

\begin{proof}
We consider the substitution $\theta'_4$ discussed in Theorem~\ref{iwip2}. It is irreducible 
and satisfies the strong coincidence condition.
The dominant root of $\bfM_{\theta'_4}$ is given by $\lambda = \beta^2  \approx 1.90517$.
Algorithmic computation of the self-replicating boundary graph $\Gamma^{(\rm sr)}_{\theta_4}$ (119 vertices) yields 
$\mu_{\rm sr}$ as stated.
As in the previous theorem the fractal dimension can be calculated by applying Theorem~\ref{Bdim} and Corollary~\ref{Hdim}.

As $\theta'_4$ is irreducible $\R_{\theta'_4}$ induces a lattice multi-tiling which we may assume to be proper by the Pisot conjecture.
This is confirmed by Theorem~\ref{tilingproperty} since $\mu_{\rm sr} < \lambda_0$.
By a similar argumentation as in the previous theorems ($\bfPsi{\beta-\beta^2}$ is a lattice point) we conclude that
\begin{multline*}
\big\{\bfx + \big(\bfPhi{\alpha}(\Fa) \cup (\bfPsi{\beta} - \bfPhi{\alpha\beta}(\Fa))
\cup (\bfPsi{\beta-\beta^2} + \big(\bfPhi{\alpha\beta^2}(\Fa)) \\
\cup (\bfPsi{\beta-\beta^2+\beta^3} - \big(\bfPhi{\beta^3}(\Fa))\big)
: \bfx \in \Xi\big\}
\end{multline*}
is a proper tiling. By the definition of $\fR$ and $\fL$ we immediately obtain the stated result.
\end{proof}

\end{section}

\section*{Acknowledgement}
The authors would like to thank the referee who showed great interest in the research. The quite productive suggestions made it possible to significantly increase the quality of the article.


\begin{thebibliography}{10}

\bibitem{Akiyama-Barge-Berthe-Lee-Siegel:15}
{\sc S.~Akiyama, M.~Barge, V.~Berth{\'e}, J.-Y. Lee, and A.~Siegel}, {\em On
  the {P}isot substitution conjecture}, in Mathematics of Aperiodic Order,
  vol.~309 of Progress in Mathematics, Birkh\"{a}user, 2015, pp.~33--72.

\bibitem{Arnoux-Berthe-Hilion-Siegel.2006}
{\sc P.~{Arnoux}, V.~{Berth\'e}, A.~{Hilion}, and A.~{Siegel}}, {\em {Fractal
  representation of the attractive lamination of an automorphism of the free
  group.}}, {Ann. Inst. Fourier}, 56 (2006), pp.~2161--2212.

\bibitem{Arnoux-Ito.2001}
{\sc P.~Arnoux and S.~Ito}, {\em Pisot substitutions and {R}auzy fractals},
  Bull. Belg. Math. Soc. Simon Stevin, 8 (2001), pp.~181--207.
\newblock Journ\'ees Montoises d'Informatique Th\'eorique (Marne-la-Vall\'ee,
  2000).

\bibitem{Bandt.1989b}
{\sc C.~{Bandt}}, {\em {Self-similar sets. III: Constructions with sofic
  systems.}}, {Monatsh. Math.}, 108 (1989), pp.~89--102.

\bibitem{Barge.2018}
{\sc M.~{Barge}}, {\em {The Pisot conjecture for \(\beta\)-substitutions}},
  {Ergodic Theory Dyn. Syst.}, 38 (2018), pp.~444--472.

\bibitem{Barge-Diamond.2002}
{\sc M.~{Barge} and B.~{Diamond}}, {\em {Coincidence for substitutions of Pisot
  type}}, {Bull. Soc. Math. Fr.}, 130 (2002), pp.~619--626.

\bibitem{Barnsley-Vince.2014}
{\sc M.~{Barnsley} and A.~{Vince}}, {\em {Fractal tilings from iterated
  function systems.}}, {Discrete Comput. Geom.}, 51 (2014), pp.~729--752.

\bibitem{Berthe-Siegel.2005}
{\sc V.~{Berth\'e} and A.~{Siegel}}, {\em {Tilings associated with
  beta-numeration and substitutions}}, {Integers}, 5 (2005), pp.~paper a02, 46.

\bibitem{Berthe-Siegel-Thuswaldner.2010}
{\sc V.~Berth{\'e}, A.~Siegel, and J.~Thuswaldner}, {\em Substitutions, {R}auzy
  fractals and tilings}, in Combinatorics, automata and number theory, vol.~135
  of Encyclopedia Math. Appl., Cambridge Univ. Press, Cambridge, 2010,
  pp.~248--323.

\bibitem{Bestvina-Feighn-Handel.2000}
{\sc M.~{Bestvina}, M.~{Feighn}, and M.~{Handel}}, {\em {The Tits alternative
  for \(\text{Out}(F_n)\). I: Dynamics of exponentially-growing
  automorphisms}}, {Ann. Math. (2)}, 151 (2000), pp.~517--623.

\bibitem{Canterini-Siegel.2001b}
{\sc V.~Canterini and A.~Siegel}, {\em Geometric representation of
  substitutions of {P}isot type}, Trans. Amer. Math. Soc., 353 (2001),
  pp.~5121--5144 (electronic).

\bibitem{Ei-Ito-Rao.2006}
{\sc H.~Ei, S.~Ito, and H.~Rao}, {\em Atomic surfaces, tilings and
  coincidences. {II}. {R}educible case}, Ann. Inst. Fourier (Grenoble), 56
  (2006), pp.~2285--2313.

\bibitem{Fogg.2002}
{\sc N.~P. Fogg}, {\em Substitutions in dynamics, arithmetics and
  combinatorics}, vol.~1794 of Lecture Notes in Mathematics, Springer-Verlag,
  Berlin, 2002.
\newblock Edited by V.\ Berth\'e, S.\ Ferenczi, C.\ Mauduit and A.\ Siegel.

\bibitem{Hollander-Solomyak.2003}
{\sc M.~Hollander and B.~Solomyak}, {\em Two-symbol {P}isot substitutions have
  pure discrete spectrum}, Ergodic Theory Dynam. Systems, 23 (2003),
  pp.~533--540.

\bibitem{Hutchinson.1981}
{\sc J.~E. Hutchinson}, {\em Fractals and self-similarity}, Indiana Univ. Math.
  J., 30 (1981), pp.~713--747.

\bibitem{Ito-Rao.2006}
{\sc S.~Ito and H.~Rao}, {\em Atomic surfaces, tilings and coincidence. {I}.
  {I}rreducible case}, Israel J. Math., 153 (2006), pp.~129--155.

\bibitem{Lagarias.c2025}
{\sc J.~C. Lagarias}, {\em Personal communication}.
\newblock September 2025.

\bibitem{Lagarias-Porta-Stolarsky:93}
{\sc J.~C. Lagarias, H.~A. Porta, and K.~B. Stolarsky}, {\em Asymmetric tent
  map expansions. {I}. {E}ventually periodic points}, J. London Math. Soc. (2),
  47 (1993), pp.~542--556.

\bibitem{Lagarias-Porta-Stolarsky:94}
\leavevmode\vrule height 2pt depth -1.6pt width 23pt, {\em Asymmetric tent map
  expansions. {II}. {P}urely periodic points}, Illinois J. Math., 38 (1994),
  pp.~574--588.

\bibitem{Loridant-Messaoudi-Surer-Thuswaldner.2013}
{\sc B.~Loridant, A.~Messaoudi, P.~Surer, and J.~M. Thuswaldner}, {\em Tilings
  induced by a class of cubic {R}auzy fractals}, Theoret. Comput. Sci., 477
  (2013), pp.~6--31.

\bibitem{Mauldin-Williams:88}
{\sc R.~D. Mauldin and S.~C. Williams}, {\em Hausdorff dimension in graph
  directed constructions}, Trans. Amer. Math. Soc., 309 (1988), pp.~811--829.

\bibitem{Rao-Wen-Yang.2014}
{\sc H.~Rao, Z.-Y. Wen, and Y.-M. Yang}, {\em Dual systems of algebraic
  iterated function systems}, Adv. Math., 253 (2014), pp.~63--85.

\bibitem{Rauzy.1982}
{\sc G.~Rauzy}, {\em Nombres alg\'ebriques et substitutions}, Bull. Soc. Math.
  France, 110 (1982), pp.~147--178.

\bibitem{Scheicher-Sirvent-Surer:16}
{\sc K.~{Scheicher}, V.~F. {Sirvent}, and P.~{Surer}}, {\em {Dynamical
  properties of the tent map.}}, {J. Lond. Math. Soc., II. Ser.}, 93 (2016),
  pp.~319--340.

\bibitem{Siegel-Thuswaldner.2009}
{\sc A.~Siegel and J.~M. Thuswaldner}, {\em Topological properties of {R}auzy
  fractals}, M\'em. Soc. Math. Fr. (N.S.),  (2009), p.~140.

\bibitem{Sirvent-Wang.2002}
{\sc V.~F. Sirvent and Y.~Wang}, {\em Self-affine tiling via substitution
  dynamical systems and {R}auzy fractals}, Pacific J. Math., 206 (2002),
  pp.~465--485.

\bibitem{Smyth.1999}
{\sc C.~J. Smyth}, {\em There are only eleven special {P}isot numbers}, Bull.
  London Math. Soc., 31 (1999), pp.~1--5.

\bibitem{HPSurer}
{\sc P.~Surer}, {\em Online}.
\newblock \url{https://homepage.boku.ac.at/palovsky/TentTiles/}, 2024.

\bibitem{Mathematica12.2}
{\sc {Wolfram Research, Inc.}}, {\em Mathematica, {V}ersion 12.2}.
\newblock Champaign, IL, 2020.

\end{thebibliography}
\end{document}